\numberwithin{equation}{section}
\newtheorem{thm}{Theorem}[section]
\newtheorem{prop}[thm]{Proposition}
\newtheorem{lem}[thm]{Lemma}
\newtheorem{cor}[thm]{Corollary}
\theoremstyle{definition}
\theoremstyle{remark}
\newtheorem{rem}[thm]{Remark}
\newcommand{\Z}{\mathbb{Z}}
\newcommand{\C}{\mathbb{C}}
\newcommand{\R}{\mathbb{R}}
\newcommand{\Perv}{\mathrm{Perv}}
\newcommand{\HC}{\mathcal{HC}}
\DeclareMathOperator{\Hom}{Hom}
\DeclareMathOperator{\Ann}{Ann}
\DeclareMathOperator{\Ad}{Ad}
\DeclareMathOperator{\Stab}{Stab}
\DeclareMathOperator{\Lie}{Lie}
\DeclareMathOperator{\PGL}{PGL}
\DeclareMathOperator{\Spec}{Spec}
\newcommand{\wtsp}{\Gamma}
\newcommand{\wt}{\mathrm{wt}}
\newcommand{\Ker}{\mathrm{Ker}}
\newcommand{\Cbatu}{\mathbb{G}_{\mathit{m}}}
\newcommand{\Caff}{\mathbb{A}}
\newcommand*{\Kat}[1]{\mathrm{Kat}_{#1}}
\title{Jacquet functor and De Concini-Procesi compactification}
\subjclass[2010]{22E46, 14F05}
\author{Noriyuki Abe}
\author{Yoichi Mieda}
\address[Noriyuki Abe]{Creative Research Institution, Hokkaido University, N21 W10, Kita-ku Sapporo 001-0021, Japan}
\email{abenori@math.sci.hokudai.ac.jp}
\address[Yoichi Mieda]{Faculty of Mathematics, Kyushu University, 744 Motooka, Nishi-ku, Fukuoka, 819-0395 Japan}
\email{mieda@math.kyushu-u.ac.jp}
\begin{document}
\begin{abstract}
We give a geometric realization of the Jacquet functor using a deformation of De Concini-Procesi compactification.
\end{abstract}

\maketitle
\section{Introduction}
The symmetric variety is used from a long time ago, when representations of a real reductive group are studied by the analytic way.
On the other hand, when they are studied by the algebraic way, due to the localization theorem of Beilinson-Bernstein~\cite{MR610137}, the flag variety is often used.
However, the geometry of the symmetric space is richer than that of the flag variety.
For example, the symmetric space has a boundary and one can take a ``limit'' to this boundary (cf.~\cite{MR485861}).

Fortunately, the localization theorem gives a way to realize representations as geometric objects on the symmetric variety $G/K$, where $G$ (resp.~$K$) is the complexification of a real reductive group $G_\R$ (resp.~a maximal compact subgroup $K_\R$ of $G_\R$).
However, as far as the authors know, little is studied by such a way.
In this paper, we use the symmetric variety and try to take a ``limit'' of such a geometric object.
The limit should become the Jacquet module~\cite{MR562655} since the Jacquet module describes the asymptotic behavior of matrix coefficients~\cite{MR705884}.
In the $p$-adic case, similar results can be found in a work of Schneider-Stuhler~\cite{MR1471867}.
They realize the Jacquet module on the boundary of the Borel-Serre compactification of the Bruhat-Tits building.
The vertices of the Bruhat-Tits building for a $p$-adic semisimple group $G$ are in bijection with
a union of sets of the form $G/K$, where $K$ is a maximal compact subgroup of $G$.
So it can be regarded as an analogue of the symmetric variety.
In this paper, we realize the Jacquet module by taking a limit on the symmetric space.
Notice that, if you use the flag variety instead of the symmetric space, a realization of the Jacquet module has already been given by Emerton-Nadler-Vilonen~\cite{MR2096674}.

We state our main results.
Assume that $G_\R$ is of adjoint type.
Let $G_\R = K_\R A_\R N_\R$ be an Iwasawa decomposition, and $M_\R$ the centralizer of $A_\R$ in $K_\R$.
Then $P_\R = M_\R A_\R N_\R$ is a Langlands decomposition of a minimal parabolic subgroup.
We use lower-case fraktur letters to denote the corresponding Lie algebras and omit the subscripts ``$\R$'' to denote complexifications.
Let $X$ be the De Concini-Procesi compactification of $G/K$~\cite{MR718125}.
The $G$-orbit of $X$ is parameterized by a subset of $\Pi$, where $\Pi\subset\Hom_\R(\mathfrak{a}_\R,\R)$ is the set of simple restricted roots.
Consider the closures of the codimension $1$ orbits $\{Y_\alpha\}_{\alpha\in\Pi}$.
Then one can construct the variety $\mathcal{X}$ over $\Caff^\Pi$ by iterating the deformation to the
normal cone (see Section~\ref{sec:Deformation to normal cone}).
The subvariety $Y_\alpha\subset X$ defines the subvariety $\mathcal{Y}_\alpha\subset \mathcal{X}$.
Put $\mathcal{Z} = \mathcal{X}\setminus\bigcup_{\alpha\in\Pi}\mathcal{Y}_\alpha$.
This is the variety which we will use.
An important property of this variety is given by the following proposition.
For $\Theta\subset\Pi$, let $P_{\Theta,\R} = M_{\Theta,\R}A_{\Theta,\R}N_{\Theta,\R}$ be a parabolic subgroup of $G_\R$ corresponding to $\Theta$.
Put $K_\Theta = M_\Theta\cap K$.
Let $f_\mathcal{Z}\colon \mathcal{Z}\to\Caff^\Pi$ be the canonical morphism.
\begin{prop}[Lemma~\ref{lem:stabilizer of x_Theta}, \ref{lem:transitive on fiber of Z}]\label{prop:structure of Z}
For $\Theta\subset\Pi$, we have $f_{\mathcal{Z}}^{-1}((\Cbatu)^{\Theta}\times \{0\}^{\Pi\setminus\Theta}) \simeq G/K_\Theta N_\Theta\times (\Cbatu)^{\Theta}$.
\end{prop}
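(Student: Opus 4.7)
The plan is to verify the two referenced lemmas---the computation of the stabilizer of a natural base point $x_\Theta$ in the fiber (Lemma~\ref{lem:stabilizer of x_Theta}) and the transitivity of the $G$-action on the fiber (Lemma~\ref{lem:transitive on fiber of Z})---and then combine them with a trivialization along the $(\Cbatu)^\Theta$-direction. The $G$-action on $X$ preserves each $Y_\alpha$, hence lifts canonically to $\mathcal{X}$ and restricts to an action on $\mathcal{Z}$ over $\Caff^\Pi$. Independently, the iterated deformation to the normal cone carries a natural $\Cbatu^\Pi$-action covering the coordinatewise scaling on $\Caff^\Pi$ (one $\Cbatu$-factor per stage of the deformation), and this action commutes with $G$.

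The first step is to construct a canonical base point $x_\Theta \in f_\mathcal{Z}^{-1}(\mathbf{1}_\Theta, \mathbf{0}_{\Pi\setminus\Theta})$. This point is obtained by taking, for each $\alpha \in \Pi\setminus\Theta$, a $1$-parameter degeneration of $eK \in G/K$ along a cocharacter of $A$ going to infinity in the $\alpha$-direction, and extracting the limit in the normal cone at the $\alpha$-th stage; by construction this limit avoids each $\mathcal{Y}_\alpha$, so $x_\Theta$ lies in $\mathcal{Z}$. The $\Cbatu^\Theta$-factor of the scaling action, acting only on the $\Theta$-coordinates of the base, then transports $x_\Theta$ to a base point in every fiber over $(\Cbatu)^\Theta \times \{0\}^{\Pi\setminus\Theta}$, producing a section of $f_\mathcal{Z}$ over this stratum. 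Combined with the $G$-action, we obtain a morphism
\begin{equation*}
G \times (\Cbatu)^\Theta \longrightarrow f_\mathcal{Z}^{-1}\bigl((\Cbatu)^\Theta \times \{0\}^{\Pi\setminus\Theta}\bigr),\qquad (g,t) \longmapsto g\cdot t\cdot x_\Theta.
\end{equation*}

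For the transitivity assertion (Lemma~\ref{lem:transitive on fiber of Z}), I would invoke the $G$-equivariant local structure of the De Concini--Procesi compactification along the deep boundary stratum $\bigcap_{\alpha\notin\Theta} Y_\alpha$, where a neighborhood fibers equivariantly over a partial flag variety with affine normal slice; pulling this description back through the iterated normal cone gives an explicit local model in which $G\cdot x_\Theta$ visibly exhausts the fiber. For the stabilizer assertion (Lemma~\ref{lem:stabilizer of x_Theta}), I would analyze the infinitesimal action on $\mathfrak{g} = \overline{\mathfrak{n}}_\Theta \oplus \mathfrak{m}_\Theta \oplus \mathfrak{a}_\Theta \oplus \mathfrak{n}_\Theta$. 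The subalgebra $\mathfrak{k}_\Theta$ fixes $x_\Theta$ because it preserves $eK$ and commutes with the degenerating cocharacter (as $K_\Theta\subset M_\Theta$ centralizes $A_\Theta$); $\mathfrak{n}_\Theta$ fixes $x_\Theta$ because each of its root weights pairs nontrivially with the $\Pi\setminus\Theta$-directions, so its root vectors vanish in the normal-cone limit; and the complementary subspaces $\overline{\mathfrak{n}}_\Theta$, $\mathfrak{a}_\Theta$, and the complement of $\mathfrak{k}_\Theta$ in $\mathfrak{m}_\Theta$ all act nontrivially, yielding $\Stab_G(x_\Theta) = K_\Theta N_\Theta$ exactly.

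Combining the two lemmas, the morphism above factors through a $G$-equivariant bijection $G/K_\Theta N_\Theta \times (\Cbatu)^\Theta \to f_\mathcal{Z}^{-1}((\Cbatu)^\Theta \times \{0\}^{\Pi\setminus\Theta})$, which is an isomorphism of varieties by Zariski's main theorem (both sides are smooth and irreducible). The main obstacle is the stabilizer computation: one must track through the $|\Pi|$ successive stages of the iterated normal cone which root subgroups extend across each exceptional locus while fixing $x_\Theta$, and how the various components of $A$ interact with the scaling $\Cbatu^\Pi$-action of the deformation. The transitivity, by contrast, should reduce more mechanically to known orbit stratifications on $X$ once the local equivariant structure near the deep boundary is made explicit.
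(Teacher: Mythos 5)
Your overall architecture --- pick a base point $x_\Theta$ in the fiber over $t_\Theta = (1^\Theta,0^{\Pi\setminus\Theta})$, compute its stabilizer, show $G$ acts transitively on the fiber, and then spread out over $(\Cbatu)^\Theta$ using the scaling structure of the deformation --- matches the paper's, which constructs $x_\Theta$ as the value of an explicit section $s\colon\Caff^\Pi\to\mathcal{Z}$ in the coordinate chart $\mathcal{Z}'\simeq\overline{N}\times(\Cbatu)^\Pi\times\Caff^\Pi$. But the stabilizer argument as you have written it has a real gap. You derive $\mathfrak{k}_\Theta\oplus\mathfrak{n}_\Theta\subset\Lie\Stab_G(x_\Theta)$ correctly (the paper does this by observing that for $t\in(\Cbatu)^\Pi$, $\Stab_G(s(t^2))=\Ad(\omega(t)^{-1})K$ has Lie algebra $\mathfrak{m}\oplus\{X+\beta(\omega(t))^2\theta(X)\}$, and this polynomial family extends to $t\in\Caff^\Pi$, so by upper-semicontinuity of stabilizers the closure of these Lie algebras sits inside $\Lie\Stab_G(s(t^2))$ --- it is the $\theta(X)$-components that die, not the root vectors). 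However, the reverse inclusion is where you write only that $\overline{\mathfrak{n}}_\Theta$, $\mathfrak{a}_\Theta$, and $\mathfrak{m}_\Theta\cap\mathfrak{p}$ ``all act nontrivially,'' which is precisely what needs proof and is moreover insufficient at the group level: a Lie-algebra computation cannot rule out extra components of the stabilizer. The paper gets the upper bound cheaply by projecting $x_\Theta$ along $\mathcal{Z}_\Theta\hookrightarrow T_{X_\Theta}(X)\times(\Cbatu)^\Theta\to X_\Theta$ to the point $y_\Theta$ whose stabilizer is already known to be $K_\Theta A_\Theta N_\Theta$ by property (C6) of the compactification; this instantly kills $\overline{\mathfrak{n}}_\Theta$ and $\mathfrak{m}_\Theta\cap\mathfrak{p}$ and reduces the whole question to computing $\Stab_{A_\Theta}(x_\Theta)$, which is then read off in coordinates via Lemma~\ref{lem:action of A on ncd}: $\omega(a)x_\Theta=(1,(a_\alpha^{-2}),t_\Theta)$, forcing $a_\alpha^2=1$. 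You should incorporate that projection-to-$X_\Theta$ step; without it, the stabilizer argument is not closed.

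A secondary point: you propose to ``track through the $|\Pi|$ successive stages of the iterated normal cone'' and to invoke a local equivariant fibration over a partial flag variety for transitivity. Both are avoidable. Lemma~\ref{lem:affine calculation of deformation} collapses the iteration into the single formula $\mathcal{X}=\Spec\bigoplus_{\underline{n}}I^{-\underline{n}}T^{\underline{n}}$, so one never has to reason stage-by-stage; and transitivity reduces, via the same projection $p\colon T_{X_\Theta}(X)\to X_\Theta$ and the orbit fact $Gy_\Theta=X_\Theta$, to showing $A_\Theta x_\Theta=p^{-1}(y_\Theta)\cap\mathcal{Z}$, which is again a one-line coordinate check using Lemma~\ref{lem:local calculation of deformation} and Lemma~\ref{lem:action of A on ncd}. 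Your $\Cbatu^\Pi$-scaling observation is correct and is essentially encoded in Lemma~\ref{lem:deformation to normal cone}, which already gives the $(\Cbatu)^\Theta$-product structure $\mathcal{X}_\Theta\simeq C_{Y_{\Theta^c}}(X)\times(\Cbatu)^\Theta$ with $G$ acting trivially on the $T$-coordinates, so the final spreading-out over the stratum is immediate.
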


Since $X$ has many orbits, it is natural to consider the ``partial'' Jacquet modules.
Let $\HC_{\Theta,\rho}$ be the category of finitely generated $(\mathfrak{g},K_\Theta N_\Theta)$-modules with the same infinitesimal characters as that of the trivial representation.
For $\Theta_2\subset\Theta_1\subset\Pi$ and $V\in \HC_{\Theta_1,\rho}$, put
\[
	J_{\Theta_2,\Theta_1}(V) = \{v\in \varprojlim_k V/(\mathfrak{m}_{\Theta_1}\cap\overline{\mathfrak{n}}_{\Theta_2})^kV\mid \text{$\mathfrak{n}_{\Theta_2}^lv = 0$ for some $l$}\}
\]
where $\overline{\mathfrak{n}}_{\Theta_2}$ is the nilradical of the parabolic subalgebra opposite to $\mathfrak{p}_{\Theta_2}$.
Then we can prove that $J_{\Theta_2,\Theta_1}(V)\in \HC_{\Theta_2,\rho}$ (Proposition~\ref{prop:image of Jacquet functor}).
The usual Jacquet module is $J_{\emptyset,\Pi}$.
In general, let $\Perv_H(Z)$ be the category of $H$-equivariant perverse sheaves on $Z$ for an algebraic group $H$ and a $H$-variety $Z$.
Let $B$ be a Borel subgroup of $G$.
The Beilinson-Bernstein correspondence and the Riemann-Hilbert correspondence give an equivalence of categories $\HC_{\Theta,\rho}\simeq\Perv_{K_\Theta N_\Theta}(G/B)$. The latter category is obviously equivalent
to the category $\Perv_{B}(G/K_\Theta N_\Theta)$, and thus we obtain an equivalence 
$\HC_{\Theta,\rho}\simeq\Perv_{B}(G/K_\Theta N_\Theta)$.

Now we give our main theorem.
Fix $\Theta_2\subset\Theta_1\subset\Pi$ and for each $\alpha\in\Theta_1\setminus\Theta_2$, take $n_\alpha\in\Z_{>0}$.
Define $\nu\colon \Caff^1\to\Caff^\Pi$ by $\nu(t) = (0^{\Pi\setminus\Theta_1},(t^{n_\alpha})_{\alpha\in\Theta_1\setminus\Theta_2},0^{\Theta_2})$.
Put $f_\nu\colon \mathcal{Z}_\nu = \mathcal{Z}\times_{\Caff^\Pi}\Caff^1\to\Caff^1$.
The by Proposition~\ref{prop:structure of Z}, we have $f_\nu^{-1}(\Cbatu)\simeq G/K_{\Theta_1} N_{\Theta_1}\times\Cbatu$ and $f_\nu^{-1}(0) \simeq G/K_{\Theta_2} N_{\Theta_2}$.
Let $p_\nu\colon f_\nu^{-1}(\Cbatu)\simeq G/K_{\Theta_1} N_{\Theta_1}\times\Cbatu\to G/K_{\Theta_1} N_{\Theta_1}$ be the natural projection and $R\psi\colon\Perv(f_\nu^{-1}(\Cbatu))\to \Perv(f_\nu^{-1}(0))$ be the nearby cycle functor.
Define the functor $\Kat{\nu}\colon \Perv(G/K_{\Theta_1}N_{\Theta_1})\to \Perv(G/K_{\Theta_2}N_{\Theta_2})$ by $\Kat{\nu} = R\psi\circ p_\nu^*$.
\begin{thm}
We have the following commutative diagram:
\[
\xymatrix{
\HC_{\Theta_1,\rho}\ar[r]^{J_{\Theta_2,\Theta_1}}\ar@{<->}[d]_\simeq & \HC_{\Theta_2,\rho}\ar@{<->}[d]^\simeq\\
\Perv_B(G/K_{\Theta_1}N_{\Theta_1})\ar[r]^{\Kat{\nu}} & \Perv_B(G/K_{\Theta_2}N_{\Theta_2}).
}
\]
\end{thm}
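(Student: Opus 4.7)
The plan is to translate both sides of the diagram into regular holonomic $D$-modules via Beilinson-Bernstein and Riemann-Hilbert, and then identify the nearby cycle functor $\Kat{\nu}$ with the algebraic Jacquet construction $J_{\Theta_2,\Theta_1}$ through the Kashiwara-Malgrange $V$-filtration associated to $f_\nu$. First I would spell out the vertical equivalences: Beilinson-Bernstein localization at $\rho$ composed with the swap $\Perv_{K_{\Theta_i}N_{\Theta_i}}(G/B)\simeq \Perv_B(G/K_{\Theta_i}N_{\Theta_i})$ and Riemann-Hilbert, so that a module $V\in\HC_{\Theta_1,\rho}$ corresponds to a $B$-equivariant regular holonomic $D$-module $\mathcal{M}$ on $G/K_{\Theta_1}N_{\Theta_1}$ with $V=\Gamma(\mathcal{M})$. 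Since $G$ (hence $B$) acts on $\mathcal{Z}_\nu$ commuting with $f_\nu$, both $p_\nu^*$ and $R\psi$ preserve $B$-equivariance, and $\Kat{\nu}$ does restrict to a functor $\Perv_B\to\Perv_B$.

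The heart of the argument is the local structure of $f_\nu\colon\mathcal{Z}_\nu\to\Caff^1$ along the special fibre $f_\nu^{-1}(0)\simeq G/K_{\Theta_2}N_{\Theta_2}$. Using Proposition~\ref{prop:structure of Z} and the iterated deformation-to-the-normal-cone construction of $\mathcal{X}$, I would work at a base point of the special fibre and compute the transverse normal cone explicitly. The vector fields on $\mathcal{Z}_\nu$ coming from $\mathfrak{g}$ should split into directions tangent to the special fibre, given by $\mathfrak{k}_{\Theta_2}+\mathfrak{n}_{\Theta_2}$ together with a complement matching the tangent space of $G/K_{\Theta_2}N_{\Theta_2}$, and transverse directions corresponding to $\mathfrak{m}_{\Theta_1}\cap\overline{\mathfrak{n}}_{\Theta_2}$, acting on $\mathcal{Z}_\nu$ with orders of vanishing in $t$ prescribed by the exponents $(n_\alpha)$. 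This is the geometric counterpart of the filtration used to define $J_{\Theta_2,\Theta_1}$.

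With this picture in hand, computing $\Kat{\nu}\mathcal{M}=R\psi\, p_\nu^*\mathcal{M}$ via the $V$-filtration translates, on global sections, into the completion $\varprojlim_k V/(\mathfrak{m}_{\Theta_1}\cap\overline{\mathfrak{n}}_{\Theta_2})^k V$ followed by extraction of the locally $\mathfrak{n}_{\Theta_2}$-finite part, because only $\mathfrak{n}_{\Theta_2}$ contributes genuinely unipotent monodromy on the normal bundle under the $\rho$-infinitesimal character. This matches the definition of $J_{\Theta_2,\Theta_1}(V)$ exactly, and naturality in $V$ gives commutativity of the diagram.

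The hard part will be rigorously identifying the $V$-filtration on $p_\nu^*\mathcal{M}$ along $\{t=0\}$ with the algebraic filtration by powers of $\mathfrak{m}_{\Theta_1}\cap\overline{\mathfrak{n}}_{\Theta_2}\subset U(\mathfrak{g})$: one has to match the exponents $(n_\alpha)$ with the root-grading of the transverse directions, prove coherence and regularity of the associated Rees module, and carefully separate the nilpotent and unipotent components of the monodromy. Using $B$-equivariance and Proposition~\ref{prop:structure of Z}, this check should reduce to a finite-dimensional computation on an affine slice transverse to the $B$-orbits through the base point, where the explicit construction from Section~\ref{sec:Deformation to normal cone} provides usable coordinates.
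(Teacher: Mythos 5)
Your approach is genuinely different from the paper's, and it has a substantive gap that would be difficult to close.

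The paper never computes a $V$-filtration on $\mathcal{Z}_\nu$ at all. Instead, its proof is a pure base-change argument: it constructs a ladder of cartesian squares with smooth vertical arrows, whose top row is the Emerton--Nadler--Vilonen family $G/B\times\Caff^1\to\Caff^1$ (with the action of $\nu(t)$), whose bottom row is $\mathcal{Z}_\nu\to\Caff^1$, and whose two middle rows pass through $G$ itself. The middle rows are the key: the map $G\times\Caff^1\to\mathcal{Z}_\nu$, $(g,t)\mapsto g\,s_\nu(t^2)$, and the map $G\times\Caff^1\to G/B\times\Caff^1$ from the other side, make the diagram commute, and since nearby cycles commute with smooth pullback, the nearby cycle functor $\Kat{\nu}$ on the symmetric side is carried to $\Psi_\nu$ on the flag side. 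The hard filtration analysis is then entirely delegated to Theorem~\ref{thm:Emerton-Nadler-Vilonen2} (the extension of ENV to non-regular $\nu$), which lives on $G/B$ where Beilinson--Bernstein applies directly. You propose instead to redo the $V$-filtration computation from scratch on $\mathcal{Z}_\nu$, essentially reproving ENV on the symmetric-variety side; this is much harder and is exactly what the paper's diagram is designed to avoid.

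Beyond efficiency, there is a concrete error in your setup: you assert that the $B$-equivariant $D$-module $\mathcal{M}$ on $G/K_{\Theta_1}N_{\Theta_1}$ satisfies $V=\Gamma(\mathcal{M})$. That is not how the equivalence $\HC_{\Theta_1,\rho}\simeq\Perv_B(G/K_{\Theta_1}N_{\Theta_1})$ is built: Beilinson--Bernstein identifies $V$ with global sections of a twisted $D$-module on $G/B$, and the passage to $G/K_{\Theta_1}N_{\Theta_1}$ goes through equivariant sheaves on $G$ (a Matsuki-type correspondence), not through the global sections functor on $G/K_{\Theta_1}N_{\Theta_1}$. So the identification of your proposed completion $\varprojlim V/(\mathfrak{m}_{\Theta_1}\cap\overline{\mathfrak{n}}_{\Theta_2})^kV$ with the degeneration of $\Gamma(\mathcal{M})$ does not come for free, and the step ``translates, on global sections, into the completion'' is precisely where your argument has no content. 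The paper sidesteps the entire issue by only ever taking sections on $G/B$, where the identification is already established, and by transporting the nearby-cycle computation itself (rather than its global sections) across the base-change diagram. If you want to salvage a direct approach, you would need at minimum a version of the affine/exactness property of $\Gamma$ for the stack $B\backslash G/K_{\Theta_1}N_{\Theta_1}$ and a rigorous local normal form for $\mathcal{Z}_\nu$ along its special fibre; neither is routine, and the paper's proof shows that both can be avoided.
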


We summarize the contents of this paper.
In Section~\ref{sec:Deformation to normal cone}, we give preliminaries on the deformation to the normal cone.
The definition and the properties of the partial Jacquet functor $J_{\Theta_2,\Theta_1}$
are given in Section~\ref{sec:The Jacquet functors}.
We review the theorem of Emerton-Nadler-Vilonen in Section~\ref{sec:The geometric Jacquet functor}.
We will use their result to prove our theorem.
We finish a proof of the main theorem in Section~\ref{sec:Symmetric spaces}.

\section*{Acknowledgment}
We thank Syu Kato for giving us the construction when $G_\R = \PGL_2(\R)$.

\section{Deformation to normal cone}\label{sec:Deformation to normal cone}
Let $X$ be a scheme of finite type over $\C$ and $Y$ its closed subscheme.
Then we can construct a family $f\colon \mathcal{X}\to \Caff^1$, called the deformation to the normal cone,
that satisfies the following:
\begin{itemize}
 \item over $\Cbatu$, it is a constant family $X\times\Cbatu\to \Cbatu$, and
 \item the fiber $f^{-1}(0)$ at $0$ is isomorphic to the normal cone $C_Y(X)$.
\end{itemize}
Recall that, if we write $\mathcal{I}$ for the defining ideal of $Y\subset X$, 
the normal cone $C_Y(X)$ is the scheme $\mathop{\mathbf{Spec}} \bigoplus_{k=0}^\infty\mathcal{I}^k/\mathcal{I}^{k+1}$ over $Y$.
If $X$ and $Y$ are smooth over $\C$, $C_Y(X)$ is isomorphic to the normal bundle $T_Y(X)$. 

Let us recall briefly its construction. For more detail, see \cite[Chapter 5]{MR1644323}.
Let $\widetilde{\mathcal{X}}$ be the blow-up of $X\times \Caff^1$
along $Y\times \{0\}$. Then, by the universal property of the blow-up, we have a natural morphism
$X\times \{0\}\to \widetilde{\mathcal{X}}$, which is a closed immersion. We define $\mathcal{X}$ as the complement
of its image in $\widetilde{\mathcal{X}}$, and $f\colon \mathcal{X}\to \Caff^1$ as the composite of
$\mathcal{X}\hookrightarrow \widetilde{\mathcal{X}}\to X\times\Caff^1\to \Caff^1$.
If $X$ is an affine scheme $\Spec A$, we may describe $\mathcal{X}$ more explicitly as follows.
Let $I$ be the defining ideal of $Y\subset X$. 
Then $\mathcal{X}=\Spec \bigoplus_{n\in \Z}I^{-n}T^n$, where $T$ is an indeterminate such that
$\Caff^1=\Spec \C[T]$. Note that we set $I^n=A$ for a negative integer $n$, and regard
$\bigoplus_{n\in \Z}I^{-n}T^n$ as a subring of the Laurent polynomial ring $A[T^{\pm 1}]$.

To any subscheme $Z$ of $X$, we can attach a subscheme $\mathcal{Z}$ of $\mathcal{X}$
such that $\mathcal{Z}\to \Caff^1$ is the deformation to the normal cone with respect to
$Y\cap Z\subset Z$. If $Z$ is open in $X$, then $\mathcal{Z}$ is simply the inverse image
of $Z\times\Caff^1$ under $\mathcal{X}\to X\times \Caff^1$. On the other hand,
if $Z$ is closed in $X$, then $\mathcal{Z}$ is the strict transform of
$Z\times\Caff^1\subset X\times \Caff^1$ in $\mathcal{X}$;
namely, $\mathcal{Z}$ is the closure of $f^{-1}(Z\times \Cbatu)$ in $\mathcal{X}$.

For our purpose, iteration of this construction is important. Now let $X$ be a scheme which is smooth of
finite type over $\C$, $Y$ its effective divisor, and $\bigcup_{i=1}^lY_i$ the irreducible decomposition
of $Y$.
Assume that $Y$ is a strict normal crossing divisor.
Namely, for each subset $\Theta\subset \{1,\ldots, l\}$, we assume that $Y_\Theta=\bigcap_{i\in \Theta}Y_i$ is smooth over $\C$.
It is equivalent to saying that $Y\subset X$ is \'etale locally isomorphic to 
$(T_1\cdots T_l=0)\subset \Caff^n=\Spec \C[T_1,\ldots,T_n]$ and every irreducible component of $Y$ 
is smooth over $\C$.
Under this setting, let $\mathcal{X}^{(1)}\to \Caff^1$ be the deformation to the normal cone with respect to
$Y_1\subset X$. For each $i$, the closed subscheme $Y_i$ of $X$ induces a closed subscheme $\mathcal{Y}_i^{(1)}$
of $\mathcal{X}^{(1)}$. Next, consider the deformation to the normal cone $\mathcal{X}^{(2)}\to \Caff^1$
with respect to $\mathcal{Y}_2^{(1)}\subset \mathcal{X}^{(1)}$ and closed subschemes $\mathcal{Y}^{(2)}_i$.
Inductively, we can define a family $\mathcal{X}^{(k)}\to \Caff^1$ and 
closed subschemes $\mathcal{Y}^{(k)}_i$ of $\mathcal{X}^{(k)}$.
Recall that, by construction, $\mathcal{X}^{(k)}$ is equipped with a natural structure morphism
$\mathcal{X}^{(k)}\to \mathcal{X}^{(k-1)}\times \Caff^1$, where $\mathcal{X}^{(k)}\to\Caff^1$ is the composite
of it with the second projection (here we put $\mathcal{X}^{(0)}=X$). Therefore, we get a natural morphism 
$\pi_k\colon \mathcal{X}^{(k)}\to X\times \Caff^k$ and $f_k=\mathrm{pr}_2\circ \pi\colon \mathcal{X}^{(k)}\to \Caff^k$. If $k=l$, we simply write $\mathcal{X}$, $\mathcal{Y}_i$, $\pi$, $f$
for $\mathcal{X}^{(l)}$, $\mathcal{Y}_i^{(l)}$, $\pi_l$, $f_l$, respectively.

First let us consider \'etale locally. Assume that $X=\Caff^n=\Spec \C[S_1,\ldots,S_n]$ and
$Y_i$ is given by the equation $S_i=0$ for each $i$. Then we have
\begin{align*}
 \mathcal{X}^{(1)}&=\Spec \C\Bigl[\frac{S_1}{T_1},S_2,\ldots,S_n,T_1\Bigr],\quad \mathcal{Y}^{(1)}_i\colon 
 \frac{S_i}{T_i}=0\ (i=1),\ S_i=0\ (i\ge 2),\\
 \mathcal{X}^{(2)}&=\Spec \C\Bigl[\frac{S_1}{T_1},\frac{S_2}{T_2},S_3,\ldots,S_n,T_1,T_2\Bigr],\quad
 \mathcal{Y}^{(2)}_i\colon \frac{S_i}{T_i}=0\ (i\le 2),\ S_i=0\ (i\ge 3),\\
 &\vdots\\
 \mathcal{X}^{(l)}&=\Spec \C\Bigl[\frac{S_1}{T_1},\ldots,\frac{S_l}{T_l},S_{l+1},\ldots,S_n,T_1,\ldots,T_l\Bigr],
 \quad \mathcal{Y}^{(l)}_i\colon \frac{S_i}{T_i}=0.
\end{align*}

This computation can be generalized to the case where $X$ is affine:

\begin{lem}\label{lem:affine calculation of deformation}
 Assume that $X$ is an affine scheme $\Spec A$. Let $I_i$ be the defining ideal of $Y_i$. Then, we have
 \[
  \mathcal{X}=\Spec \bigoplus_{\underline{n}\in\Z^l}I^{-\underline{n}}T^{\underline{n}},
 \]
 where $I^{-\underline{n}}=I_1^{-n_1}\cdots I_l^{-n_l}$ and $T^{\underline{n}}=T_1^{n_1}\cdots T_l^{n_l}$
 for $\underline{n}=(n_1,\ldots,n_l)\in\Z^l$.
\end{lem}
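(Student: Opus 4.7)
The plan is to argue by induction on $k=0,1,\ldots,l$, simultaneously establishing (a) $\mathcal{X}^{(k)}=\Spec R_k$, where
\[
R_k=\bigoplus_{\underline{n}\in\Z^k}I_1^{-n_1}\cdots I_k^{-n_k}T_1^{n_1}\cdots T_k^{n_k}\subset A[T_1^{\pm 1},\ldots,T_k^{\pm 1}],
\]
and (b) for each $i$ with $k<i\le l$, the defining ideal of $\mathcal{Y}_i^{(k)}$ in $R_k$ equals $I_iR_k$. The base case $k=0$ is tautological, and the lemma is (a) at $k=l$.

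For the inductive step, I apply the single-variable affine description of the deformation to the normal cone recalled just before the lemma to $\mathcal{X}^{(k)}=\Spec R_k$ and its closed subscheme $\mathcal{Y}_{k+1}^{(k)}$, whose defining ideal is $J=I_{k+1}R_k$ by (b). A direct degree-by-degree calculation, using that the grading of $R_k$ is compatible with ideal multiplication, yields $J^{s}=\bigoplus_{\underline{n}}I_{k+1}^{s}I_1^{-n_1}\cdots I_k^{-n_k}T_1^{n_1}\cdots T_k^{n_k}$ for $s\ge 0$. Substituting this into $R_{k+1}=\bigoplus_{m\in\Z}J^{-m}T_{k+1}^{m}$ and using the convention $I_{k+1}^{-m}=A$ for $m\ge 0$ reassembles everything into the desired formula (a) at level $k+1$.

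To propagate (b), fix $i>k+1$ and note that $\mathcal{Y}_i^{(k+1)}$ is the closure of $\mathcal{Y}_i^{(k)}\times\Cbatu$ in $\mathcal{X}^{(k+1)}$. Its defining ideal in $R_{k+1}$ is therefore the contraction of $(I_iR_k)\cdot R_k[T_{k+1}^{\pm 1}]$, which a direct check identifies with $\bigoplus_{m}\bigl(I_iR_k\cap I_{k+1}^{\max(0,-m)}R_k\bigr)T_{k+1}^{m}$. The crucial input is then the identity
\[
I_i\cap\bigl(I_1^{a_1}\cdots I_k^{a_k}I_{k+1}^{s}\bigr)=I_i\cdot I_1^{a_1}\cdots I_k^{a_k}I_{k+1}^{s}\qquad(a_j,s\ge 0),
\]
which under the strict normal crossing hypothesis is verified étale locally: each $I_j$ is then principal, generated by a member of a regular system of parameters in a regular local ring, and both sides collapse to the same principal monomial ideal; flat descent of intersections of finitely generated ideals transfers the equality back to $A$. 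Applying this identity termwise and summing over $m$ identifies the contraction with $I_iR_{k+1}$, closing the induction. The main obstacle is precisely this intersection-equals-product identity: without the strict normal crossing hypothesis one only gets the inclusion $\supseteq$, and both (b) and the neatly multiplicative form of the graded pieces $I^{-\underline{n}}$ would break down.
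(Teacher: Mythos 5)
Your proof is correct and follows essentially the same route as the paper's: iterate the single-blow-up affine formula while simultaneously tracking that the defining ideal of $\mathcal{Y}_i^{(k)}$ remains $I_iR_k$, then reassemble the bigraded pieces. The one genuine value you add is making explicit the intersection-equals-product identity (e.g.\ $I_2\cap I_1^m=I_2I_1^m$) that underlies the paper's appeal to ``the local calculation,'' and correctly pinning down that this is exactly where the strict-normal-crossing hypothesis and \'etale descent of ideal operations are used.
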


\begin{proof}
 As explained above, we have $\mathcal{X}^{(1)}=\Spec B^{(1)}$
 for $B^{(1)}=\bigoplus_{n_1\in\Z}I_1^{-n_1}T_1^{n_1}$.
 The local calculation tells us that the defining ideal of $\mathcal{Y}_2^{(1)}$ is $I_2B^{(1)}$.
 Thus we have $\mathcal{X}^{(2)}=\Spec B^{(2)}$ for 
 $B^{(2)}=\bigoplus_{n_2\in\Z}I_2^{-n_2}B^{(1)}T_2^{n_2}=\bigoplus_{(n_1,n_2)\in\Z^2}I_1^{-n_1}I_2^{-n_2}T_1^{n_1}T_2^{n_2}$.
 We can proceed similarly to obtain the desired formula.
\end{proof}

\begin{rem}
 By the lemma above, we know that $\mathcal{X}$ is independent of the labeling of $Y_1,\ldots,Y_l$.
\end{rem}

For each subset $\Theta$ of $\{1,\ldots,l\}$, set $\mathcal{X}_\Theta=f^{-1}((\Cbatu)^\Theta\times \{0\}^{\Theta^c})$,
where $\Theta^c=\{1,\ldots,l\}\setminus \Theta$. Recall that we put $Y_\Theta=\bigcap_{i\in \Theta}Y_i$.

\begin{lem}\label{lem:deformation to normal cone}
 We have $\mathcal{X}_\Theta\simeq C_{Y_{\Theta^c}}(X)\times (\Cbatu)^\Theta$.
\end{lem}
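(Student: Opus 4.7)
The plan is to verify the isomorphism by reducing to a local computation. Since the iterated deformation construction commutes with étale base change on $X$ (as follows from the multi-graded description in Lemma \ref{lem:affine calculation of deformation}), and the normal cone construction is likewise étale-local on $X$, it suffices to check the isomorphism in the model case afforded by the strict normal crossing assumption: namely, $X=\Caff^n=\Spec\C[S_1,\ldots,S_n]$ with $Y_i=\{S_i=0\}$ for $i=1,\ldots,l$.

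In this model case, the explicit local computation immediately preceding Lemma \ref{lem:affine calculation of deformation} gives $\mathcal{X}=\Spec\C[s_1,\ldots,s_l,S_{l+1},\ldots,S_n,T_1,\ldots,T_l]$ with $s_i=S_i/T_i$. Restricting along the inclusion $(\Cbatu)^\Theta\times\{0\}^{\Theta^c}\hookrightarrow\Caff^l$ inverts $T_i$ for $i\in\Theta$ and kills $T_j$ for $j\in\Theta^c$, so
\[
\mathcal{X}_\Theta=\Spec\C[s_1,\ldots,s_l,S_{l+1},\ldots,S_n,T_i^{\pm 1}:i\in\Theta].
\]
On the other hand, $Y_{\Theta^c}$ is cut out in $X$ by the regular sequence $(S_j)_{j\in\Theta^c}$, so $C_{Y_{\Theta^c}}(X)$ is the trivial normal bundle of rank $|\Theta^c|$, with coordinate ring $\C[u_j,S_i,S_m:j\in\Theta^c,\,i\in\Theta,\,m>l]$, where $u_j$ is the coordinate on the $j$th normal direction. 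The assignment $u_j\mapsto s_j$ for $j\in\Theta^c$, $S_i\mapsto T_is_i$ for $i\in\Theta$, $S_m\mapsto S_m$, $T_i\mapsto T_i$ gives the desired isomorphism $C_{Y_{\Theta^c}}(X)\times(\Cbatu)^\Theta\simeq\mathcal{X}_\Theta$; its inverse is well-defined because $T_i$ is invertible on $\mathcal{X}_\Theta$.

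The main technical point is showing that this local isomorphism glues, as the formulas above use the ambient coordinates $S_i$ in a non-canonical way. I would address this by giving a coordinate-free interpretation, extracting the projection $\mathcal{X}_\Theta\to Y_{\Theta^c}$ from the structure morphism $\mathcal{X}\to X\times\Caff^l$ composed with the first projection, and recovering the normal-bundle structure along the fibers from the multi-grading in Lemma \ref{lem:affine calculation of deformation}. Alternatively, one can proceed by strong induction on $l$: over $(\Cbatu)^\Theta$ the last $l-|\Theta^c|$ deformations are trivial (by the defining property that each deformation to the normal cone is a constant family over $\Cbatu$), reducing the claim to the case $\Theta=\emptyset$; and that case follows by iterated use of the identification $(\mathcal{X}^{(l)})_0\simeq C_{\mathcal{Y}_l^{(l-1)}}(\mathcal{X}^{(l-1)})|_{\{0\}^{l-1}}$ together with the transitivity of normal cones for transverse intersections of smooth subvarieties.
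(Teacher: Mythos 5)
You have correctly reduced the question to the \'etale-local model and, more importantly, you have correctly identified the genuine difficulty: the explicit isomorphism written in local coordinates uses the ambient coordinates $S_i$ non-canonically, so one cannot conclude that it glues without first exhibiting a canonical morphism to be verified locally. That identification is the right instinct, but as written the proof has a gap, because neither of your two proposed repairs is carried through. Your route (a) gestures at a coordinate-free interpretation via the projection $\mathcal{X}_\Theta\to Y_{\Theta^c}$ and the multigrading, but stops short of producing an actual global morphism between $C_{Y_{\Theta^c}}(X)\times(\Cbatu)^\Theta$ and $\mathcal{X}_\Theta$.

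The paper supplies exactly the ingredient your route (a) is missing: it constructs the comparison map directly at the level of graded rings, before any coordinate computation. With $X=\Spec A$ and $B=\bigoplus_{\underline n\in\Z^l}I^{-\underline n}T^{\underline n}$, it first reduces to $\Theta=\emptyset$ using the canonical factorization $B[T_i^{-1}:i\in\Theta]\simeq\bigl(\bigoplus_{\underline n\in\Z^{\Theta^c}}I^{-\underline n}T^{\underline n}\bigr)\otimes_\C\C[T_i^{\pm1}:i\in\Theta]$; it then writes $\mathcal{X}_\emptyset=\Spec\bigoplus_{\underline n\in(\Z_{\ge0})^l}I^{\underline n}/JI^{\underline n}$ with $J=I_1+\cdots+I_l$, and defines $C_{Y_{\{1,\ldots,l\}}}(X)\to\mathcal{X}_\emptyset$ via the natural maps $I^{\underline n}/JI^{\underline n}\to J^{|\underline n|}/J^{|\underline n|+1}$ coming from the inclusions $I^{\underline n}\subset J^{|\underline n|}$. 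Only at that point is the \'etale-local computation in the model $\Caff^n$ invoked, and only to verify that this already-canonical map is an isomorphism. Your alternative route (b), by induction on $l$ using triviality of each deformation over $\Cbatu$ together with transitivity of normal cones for transverse smooth intersections, is a genuinely different tack and is plausible, but the commutation of ``normal cone'' with restriction to the special fiber $\{0\}^{l-1}$ and the transitivity step are themselves nontrivial and would need to be established; as stated they remain a sketch rather than a proof.
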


\begin{proof}
 We may assume that $X$ is an affine scheme $\Spec A$. Let $I_i$ be the defining ideal of $Y_i$ and
 put $B=\bigoplus_{\underline{n}\in\Z^l}I^{-\underline{n}}T^{\underline{n}}$. 
 By Lemma \ref{lem:affine calculation of deformation}, we have
 \[
  \mathcal{X}_\Theta=\Spec B[T_i^{-1}\,\vert\, i\in \Theta]/(T_i\,\vert\,i\in \Theta^c),
 \]
 where $(T_i\,\vert\,i\in \Theta^c)$ is the ideal generated by $T_i$ for $i\in \Theta^c$.
 Note that $B[T_i^{-1}\,\vert\, i\in \Theta]$ is isomorphic to
 $(\bigoplus_{\underline{n}\in\Z^{\Theta^c}}I^{-\underline{n}}T^{\underline{n}})\otimes_\C\C[T_i^{\pm 1}\,\vert\,i\in \Theta]$.
 Therefore, by replacing $\{Y_1,\ldots,Y_l\}$ with $\{Y_i\mid i\in \Theta^c\}$,
 we may assume that $\Theta=\emptyset$.
 
 Put $J=I_1+\cdots+I_l$. By the definition, we have
 \[
  \mathcal{X}_{\emptyset}=\Spec B/(T_1,\ldots,T_l)=\Spec \bigoplus_{\underline{n}\in (\Z_{\ge 0})^l}(I^{\underline{n}}/JI^{\underline{n}})T^{-\underline{n}}=\Spec \bigoplus_{\underline{n}\in (\Z_{\ge 0})^l}I^{\underline{n}}/JI^{\underline{n}}.
 \]
 On the other hand, we have $C_{Y_{\{1,\ldots,l\}}}(X)=\Spec \bigoplus_{k=0}^\infty J^k/J^{k+1}$.
 Therefore we have a natural morphism $C_{Y_{\{1,\ldots,l\}}}(X)\to \mathcal{X}_{\emptyset}$
 by sending $I^{\underline{n}}/JI^{\underline{n}}$ to $J^k/J^{k+1}$ where $k=n_1+\cdots+n_l$.
 By \'etale local calculation, it is easily seen that this morphism is an isomorphism.
\end{proof}

\begin{lem}\label{lem:local calculation of deformation}
 Assume that $X=X'\times\Caff^l$ and $Y_i=\{(x,(c_j))\in X\mid c_i=0\}$.
 \begin{enumerate}
  \item We have an isomorphism $\mathcal{X}\simeq X'\times \Caff^l\times\Caff^l$ under which
	$\pi\colon \mathcal{X}\to X\times \Caff^l=X'\times\Caff^l\times\Caff^l$ is given by
	$(x,(d_i),(t_i))\mapsto (x,(d_it_i),(t_i))$.
	\label{enum:local calculatio of deformation:description}
  \item The projection $\mathcal{X}_\Theta\simeq C_{Y_{\Theta^c}}(X)\times(\Cbatu)^\Theta\to C_{Y_{\Theta^c}}(X)\to Y_{\Theta^c}$ is
	given by $(x,(d_i),(t_i))\mapsto (x,(d_it_i))$.
	\label{enum:local calculatio of deformation:projection}
  \item Let $G$ be an algebraic group over $\C$.
	Assume that we are given an action of $G$ on $X'$ and characters
	$\chi_i\colon G\to \mathbb{G}_m$. These induce an action of $G$ on $X$ by
	$(x,(c_i))\to (gx,(\chi_i(g)c_i))$ which preserves $Y_i$. 
	Then, the induced action on $\mathcal{X}\simeq X'\times \Caff^l\times\Caff^l$ is given by
	$(x,(d_i),(t_i))\mapsto (gx,(\chi_i(g)d_i),(t_i))$.
	\label{enum:local calculatio of deformation:group action}
 \end{enumerate}
\end{lem}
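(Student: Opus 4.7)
The plan is to prove (1) by reducing to the case that $X'$ is affine and applying Lemma~\ref{lem:affine calculation of deformation}; parts (2) and (3) then follow formally from this explicit description.

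For (1), since each construction $\mathcal{X}^{(k)}\to \mathcal{X}^{(k-1)}$ is compatible with open immersions in $\mathcal{X}^{(k-1)}$, I may assume $X'=\Spec A$, so that $X=\Spec A[c_1,\ldots,c_l]$ and $I_i=(c_i)$. By Lemma~\ref{lem:affine calculation of deformation},
\[
 \mathcal{X}=\Spec R,\qquad R=\bigoplus_{\underline{n}\in\Z^l}(c_1^{-n_1}\cdots c_l^{-n_l})T_1^{n_1}\cdots T_l^{n_l}\subset A[c_1,\ldots,c_l][T_1^{\pm 1},\ldots,T_l^{\pm 1}].
\]
Introduce $d_i:=c_i/T_i\in R$. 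A short bookkeeping argument on the exponents in each graded piece $I^{-\underline{n}}T^{\underline{n}}$ shows that $R=A[d_1,\ldots,d_l,T_1,\ldots,T_l]$. Under this isomorphism, the canonical morphism $\mathcal{X}\to X\times \Caff^l$ is induced by $c_i\mapsto d_iT_i$ and $T_i\mapsto T_i$, which is the stated formula. Since the identification is canonical in $X'$, it glues to the global statement.

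For (2), combine (1) with Lemma~\ref{lem:deformation to normal cone}. In the coordinates of (1), a point of $\mathcal{X}_\Theta$ is a tuple $(x,(d_i),(t_i))$ with $t_i\in \Cbatu$ for $i\in \Theta$ and $t_i=0$ for $i\in \Theta^c$. Its image under $\pi$ is $(x,(d_it_i))\in X$, which lies in $Y_{\Theta^c}$ because $d_it_i=0$ for $i\in \Theta^c$. Tracing through the proof of Lemma~\ref{lem:deformation to normal cone}—which in our setting amounts to matching the graded ring $R$ modulo $(T_i\mid i\in \Theta^c)$ and inverted at $T_j$ for $j\in \Theta$ with the symmetric algebra description of the normal cone $C_{Y_{\Theta^c}}(X)$—the composite $\mathcal{X}_\Theta\to C_{Y_{\Theta^c}}(X)\to Y_{\Theta^c}$ coincides with this map.

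For (3), the deformation to the normal cone is functorial for $G$-equivariant automorphisms of the pair $(X,\{Y_i\})$, so the given action lifts canonically to $\mathcal{X}$. In the affine model, $g\in G$ acts on $A[c_1,\ldots,c_l]$ by $c_i\mapsto \chi_i(g)c_i$; since each $T_i$ is pulled back from the trivial base $\Caff^l$, it is $G$-fixed, and the defining relation $c_i=d_iT_i$ forces $d_i\mapsto \chi_i(g)d_i$. The only step that requires genuine attention is (2), namely verifying that the ad hoc coordinates of (1) match the abstract isomorphism produced by Lemma~\ref{lem:deformation to normal cone}; this is a routine check once the substitution $d_i=c_i/T_i$ is fixed.
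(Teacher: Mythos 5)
Your proof is correct, and it follows the same overall strategy as the paper's (explicit affine/local computation, then use the resulting coordinates to verify (2) and (3)), but two of the steps are handled by a genuinely different argument. For (1), you localize to affine $X'$, apply Lemma~\ref{lem:affine calculation of deformation}, and glue; the paper instead observes that the construction commutes with smooth base change, reduces via the smooth projection $X'\times\Caff^l\to\Caff^l$ (along which the $Y_i$ are pulled back) to $X'=\Spec\C$, and reads the answer off the \'etale-local computation preceding Lemma~\ref{lem:affine calculation of deformation}; both work, the paper's reduction is just a bit slicker. For (3), the paper uses that $\pi$ is $G$-equivariant and an isomorphism over the dense open set $X\times(\Cbatu)^l$, so the formula, obvious there, extends to all of $\mathcal{X}$. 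You instead argue purely at the ring level: $c_i\mapsto\chi_i(g)c_i$, $T_i$ is $G$-fixed, and the relation $c_i=d_iT_i$ forces $g\cdot d_i=\chi_i(g)d_i$. This is a valid and arguably cleaner route, but the step ``the defining relation forces'' needs the explicit observation that $T_i$ is a non-zero-divisor in $A[d_1,\ldots,d_l,T_1,\ldots,T_l]$ (this is the algebraic shadow of the paper's density argument, since $T_i$ vanishes on an honest divisor); one sentence to that effect would make the argument airtight.
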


\begin{proof}
 (1) Since our construction clearly commutes with a smooth base change, we may assume that $X'=\Spec \C$.
 Then the local calculation above gives us the desired isomorphism (we have only to take $n=l$).

 (2) In the same way as in (1), we may assume that $X'=\Spec \C$ and use the local calculation. 

 (3) Since $\pi\colon \mathcal{X}\to X\times\Caff^l$ is $G$-equivariant,
  for $(x,(d_i),(t_i))\in \mathcal{X}$, we have
 $\pi(g(x,(d_i),(t_i)))=g(x,(d_it_i),(t_i))=(gx,(\chi_i(g)d_it_i),(t_i))=\pi(gx,(\chi_i(g)d_i),(t_i))$.
 On the other hand, $\pi$ is an isomorphism over the open subset $X\times (\Cbatu)^l\subset X\times \Caff^l$.
 Therefore, on the dense open subset $\pi^{-1}(X\times (\Cbatu)^l)$ of $\mathcal{X}$,
 the action of $G$ is given by $(x,(d_i),(t_i))\mapsto (gx,(\chi_i(g)d_i),(t_i))$.
 Hence it is given by the same formula over the whole $\mathcal{X}$.
\end{proof}

\section{The Jacquet functors}\label{sec:The Jacquet functors}
In this section, we recall some preliminaries on the Jacquet modules, which are well-known. (For example, some of them are proved in \cite{MR705884,MR929683}.)
However, we give proofs for the sake of completeness.

Let $G_\R$ be a connected reductive linear algebraic group over $\R$, $G_\R = K_\R A_\R N_\R$ an Iwasawa decomposition, and $M_\R$ the centralizer of $A_\R$ in $K_\R$.
Then $P_\R = M_\R A_\R N_\R$ is a Langlands decomposition of a minimal parabolic subgroup.
We use lower-case fraktur letters to denote the corresponding Lie algebras and omit the subscripts ``$\R$'' to denote complexifications.
Fix a Cartan involution $\theta$ such that $K = \{g\in G\mid \theta(g) = g\}$.
Let $\Sigma$ be the restricted root system for $(\mathfrak{g},\mathfrak{a})$ and $\Sigma^+$ the positive system corresponding to $\mathfrak{n}$.
Then $\Sigma^+$ determines the set of simple roots $\Pi\subset\Sigma^+$.
As usual, the universal enveloping algebra of $\mathfrak{g}$ is denoted by $U(\mathfrak{g})$ and the center of $U(\mathfrak{g})$ is denoted by $Z(\mathfrak{g})$.

Fix a subset $\Theta\subset \Pi$.
This defines a parabolic subalgebra $\mathfrak{p}_\Theta\supset \mathfrak{p}$.
Let $\mathfrak{p}_\Theta = \mathfrak{m}_\Theta\oplus \mathfrak{a}_\Theta \oplus \mathfrak{n}_\Theta$ be a Langlands decomposition such that $\mathfrak{a}_\Theta\subset \mathfrak{a}$ and $\mathfrak{n}_\Theta\supset \mathfrak{n}$.
Denote the corresponding parabolic subgroup by $P_{\Theta,\R} = M_{\Theta,\R} A_{\Theta,\R} N_{\Theta,\R}\subset G_\R$.
Let $\overline{\mathfrak{p}}_\Theta = \mathfrak{m}_\Theta \oplus \mathfrak{a}_\Theta\oplus\overline{\mathfrak{n}}_\Theta$ be a Langlands decomposition of the opposite subalgebra of $\mathfrak{p}_\Theta$.
Put $\mathfrak{l}_\Theta = \mathfrak{m}_\Theta\oplus \mathfrak{a}_\Theta$, $L_{\Theta,\R} = M_{\Theta,\R}A_{\Theta,\R}$ and $K_\Theta = M_\Theta\cap K$.

Let $\HC_\Theta$ be the category of finitely generated $Z(\mathfrak{g})$-finite $(\mathfrak{g},K_\Theta N_\Theta)$-modules.
Here, a $(\mathfrak{g},K_\Theta N_\Theta)$-module is a vector space equipped with a $\mathfrak{g}$-action and
an algebraic $K_\Theta N_\Theta$-action satisfying the obvious compatibility.
(We do not assume that the action of $K_\Theta N_\Theta$ is semisimple.)
When we emphasis the group $G_\R$, we denote it by $\HC_\Theta^{G_\R}$.
If $\Theta = \Pi$, then $\HC_\Pi$ is the category of Harish-Chandra modules (namely, finite length $(\mathfrak{g},K)$-modules) of $G_\R$~\cite[3.4.7.~Corollary, 4.2.1.~Theorem]{MR929683}.
We will prove that $\HC_\Theta$ is equal to the category $\mathcal{O}'_{P_{\Theta,\R}}$ defined by Hecht-Schmid~\cite{MR705884} (Lemma~\ref{lem:HC,a_Theta decomp} and \ref{lem:n-homology preserve HC}).

For $\mu\in \mathfrak{a}_\Theta^*$ and an $\mathfrak{a}_\Theta$-module $V$, denote the generalized $\mu$-weight space by $\wtsp_{\mu}(V)$.
Set $\wt_{\mathfrak{a}_\Theta}(V) = \{\mu\in\mathfrak{a}^*_\Theta\mid \wtsp_{\mu}(V)\ne 0\}$.

\begin{lem}\label{lem:HC,a_Theta decomp}
For a $\mathfrak{g}$-module $V$ and $k\in\Z_{>0}$, set $V_k = \{v\in V\mid \mathfrak{n}_\Theta^k v = 0\}$.
\begin{enumerate}
\item If $V_1$ is $Z(\mathfrak{l}_\Theta)$-finite, then $V_k$ is $Z(\mathfrak{l}_\Theta)$-finite for all $k\in\Z_{>0}$.
\item If $V$ is $Z(\mathfrak{g})$-finite, then $V_1$ is $Z(\mathfrak{l}_\Theta)$-finite.
\item If $V$ is a $Z(\mathfrak{g})$-finite $(\mathfrak{g},N_\Theta)$-module, then $V = \bigoplus_{\mu\in \mathfrak{a}_\Theta^*}\wtsp_\mu(V)$.
\end{enumerate}
\end{lem}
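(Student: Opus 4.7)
For (1), I would induct on $k$, with the base case being the hypothesis. Each $V_j$ is stable under $\mathfrak{l}_\Theta$ since $[\mathfrak{l}_\Theta,\mathfrak{n}_\Theta]\subset\mathfrak{n}_\Theta$. The rule $v\mapsto(X\mapsto Xv\bmod V_{k-2})$ defines an $\mathfrak{l}_\Theta$-equivariant linear map
\[
V_k\longrightarrow \Hom_\C(\mathfrak{n}_\Theta, V_{k-1}/V_{k-2})\simeq \mathfrak{n}_\Theta^*\otimes_\C (V_{k-1}/V_{k-2}),
\]
whose kernel is exactly $V_{k-1}$: if $\mathfrak{n}_\Theta v\subset V_{k-2}$, then $\mathfrak{n}_\Theta^{k-1}v=0$. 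By the inductive hypothesis $V_{k-1}/V_{k-2}$ is $Z(\mathfrak{l}_\Theta)$-finite; invoking the standard fact that tensoring a $Z(\mathfrak{l}_\Theta)$-finite module with a finite-dimensional $\mathfrak{l}_\Theta$-module preserves $Z(\mathfrak{l}_\Theta)$-finiteness, the ambient tensor product, and hence $V_k/V_{k-1}$, is $Z(\mathfrak{l}_\Theta)$-finite. Combining with the inductive hypothesis on $V_{k-1}$ via the short exact sequence $0\to V_{k-1}\to V_k\to V_k/V_{k-1}\to 0$ yields $Z(\mathfrak{l}_\Theta)$-finiteness of $V_k$.

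For (2), I would exploit the Harish-Chandra projection arising from the PBW decomposition
\[
U(\mathfrak{g})=U(\mathfrak{l}_\Theta)\oplus\bigl(\overline{\mathfrak{n}}_\Theta U(\mathfrak{g})+U(\mathfrak{g})\mathfrak{n}_\Theta\bigr);
\]
the resulting linear projection $\gamma\colon U(\mathfrak{g})\to U(\mathfrak{l}_\Theta)$ restricts to an algebra homomorphism $\gamma\colon Z(\mathfrak{g})\to Z(\mathfrak{l}_\Theta)$, as a short commutator computation shows. For $v\in V_1$ the correction term lies in $U(\mathfrak{g})\mathfrak{n}_\Theta$ and so annihilates $v$, giving $zv=\gamma(z)v$ for every $z\in Z(\mathfrak{g})$; hence $\gamma(Z(\mathfrak{g}))v$ is a quotient of the finite-dimensional space $Z(\mathfrak{g})v$ and is itself finite-dimensional. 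The Harish-Chandra isomorphisms $Z(\mathfrak{g})\simeq\C[\mathfrak{h}^*]^W$ and $Z(\mathfrak{l}_\Theta)\simeq\C[\mathfrak{h}^*]^{W_\Theta}$ show that $Z(\mathfrak{l}_\Theta)$ is a finitely generated module over $\gamma(Z(\mathfrak{g}))$; taking module generators $z_1,\ldots,z_N$ we obtain $Z(\mathfrak{l}_\Theta)v=\sum_i z_i\cdot\gamma(Z(\mathfrak{g}))v$, which is finite-dimensional.

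For (3), I would first observe that unipotence of $N_\Theta$ together with algebraicity of its action force every vector to lie in a finite-dimensional $N_\Theta$-stable subspace on which $\mathfrak{n}_\Theta$ acts nilpotently, so $V=\bigcup_k V_k$. By parts (1) and (2), $V$ is $Z(\mathfrak{l}_\Theta)$-finite. Since $L_\Theta$ is the centralizer of $A_\Theta$, $\mathfrak{a}_\Theta$ lies in the center of $\mathfrak{l}_\Theta$, so $U(\mathfrak{a}_\Theta)\subset Z(\mathfrak{l}_\Theta)$, and each $v\in V$ generates a finite-dimensional $U(\mathfrak{a}_\Theta)$-submodule on which the commuting operators in $\mathfrak{a}_\Theta$ decompose into generalized weight spaces; aggregating yields $V=\bigoplus_\mu\wtsp_\mu(V)$.

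The main obstacle is the auxiliary tensor-product lemma invoked in (1): because the coproduct $\Delta(z)$ of a central element $z\in Z(\mathfrak{l}_\Theta)$ is not itself in $Z(\mathfrak{l}_\Theta)\otimes Z(\mathfrak{l}_\Theta)$, one cannot argue naively. The standard workaround uses the finiteness of the image of $U(\mathfrak{l}_\Theta)$ in $\mathrm{End}_\C(F)$ to control the action of $\Delta(z)$ on $F\otimes W$ through a finite collection of auxiliary elements, essentially via the translation principle for central characters.
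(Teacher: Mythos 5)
Your proof is correct and follows essentially the same route as the paper's. Part (2) uses the identical Harish-Chandra projection $\gamma\colon Z(\mathfrak{g})\to Z(\mathfrak{l}_\Theta)$ together with module-finiteness of $Z(\mathfrak{l}_\Theta)$ over $\gamma(Z(\mathfrak{g}))$, and part (3) is the same reduction via $U(\mathfrak{a}_\Theta)\subset Z(\mathfrak{l}_\Theta)$. The only minor deviation is in part (1): the paper embeds $V_{k+1}/V_k$ directly into $(\mathfrak{n}_\Theta^k)^*\otimes V_1$, so Kostant's tensor-product theorem is applied once to the fixed module $V_1$, whereas you proceed one step at a time via $V_k/V_{k-1}\hookrightarrow \mathfrak{n}_\Theta^*\otimes (V_{k-1}/V_{k-2})$, applying the tensor-product theorem to a quotient of $V_{k-1}$; both are fine, resting on the same input, and your closing remark correctly identifies Kostant's theorem as the genuine technical content.
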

\begin{proof}
(1)
We prove (1) by induction on $k$.
The homomorphism $V\to \Hom_\C(\mathfrak{n}_\Theta^k,V)$ defined by $v\mapsto (u\mapsto uv)$ gives a homomorphism $V_{k + 1}/V_k \to \Hom_\C(\mathfrak{n}_\Theta^k,V_1)$, which is injective.
By Kostant's theorem, $\Hom_\C(\mathfrak{n}_\Theta^k,V_1)\simeq (\mathfrak{n}_\Theta^k)^*\otimes V_1$ is $Z(\mathfrak{l}_\Theta)$-finite.
Hence $V_{k + 1}$ is $Z(\mathfrak{l}_\Theta)$-finite by inductive hypothesis.

(2)
Consider the following homomorphism:
$\varphi\colon Z(\mathfrak{g})\hookrightarrow U(\mathfrak{g}) = U(\mathfrak{l}_\Theta)\oplus (\overline{\mathfrak{n}}_\Theta U(\mathfrak{g}) + U(\mathfrak{g})\mathfrak{n}_\Theta)\twoheadrightarrow U(\mathfrak{l}_\Theta)$.
Then by a theorem of Harish-Chandra, the image of this homomorphism is contained in $Z(\mathfrak{l}_\Theta)$ and $Z(\mathfrak{l}_\Theta)$ is a finite $Z(\mathfrak{g})$-algebra.
Moreover, the action of $Z(\mathfrak{g})$ on $V_1$ factors through $\varphi$.
Set $I = \Ann_{Z(\mathfrak{g})}V$.
Then $\varphi(I)Z(\mathfrak{l}_\Theta)V_1 = 0$ and 
$\varphi(I)Z(\mathfrak{l}_\Theta)\subset Z(\mathfrak{l}_\Theta)$ has a finite codimension.

(3)
By (1) and (2), $V_k$ is $Z(\mathfrak{l}_\Theta)$-finite.
Since $\mathfrak{a}_\Theta$ is contained in the center of $\mathfrak{l}_\Theta$, we have $U(\mathfrak{a}_\Theta)\subset Z(\mathfrak{l}_\Theta)$.
Therefore, $V_k$ is $U(\mathfrak{a}_\Theta)$-finite.
Hence we have $V_k = \bigoplus_{\mu\in\mathfrak{a}_\Theta^*}\wtsp_{\mu}(V_k)$.
By the assumption, we have $V = \bigcup_k V_k$.
We get the lemma.
\end{proof}

In particular, if $V\in \HC_\Theta$ then $V = \bigoplus_{\mu\in\mathfrak{a}_\Theta^*}\wtsp_\mu(V)$.
Let $\Theta_1,\Theta_2\subset\Pi$ such that $\Theta_2\subset \Theta\subset\Theta_1$.
For such data, we define $J_{\Theta_2,\Theta_1}(V)$ and $\widehat{J}_{\Theta_2,\Theta_1}(V)$ as follows:
\begin{align*}
	\widehat{J}_{\Theta_2,\Theta_1}(V) & = \varprojlim_k V/(\mathfrak{m}_{\Theta_1}\cap\overline{\mathfrak{n}}_{\Theta_2})^kV,\\
	J_{\Theta_2,\Theta_1}(V) & = \{v\in \widehat{J}_{\Theta_2,\Theta_1}(V)\mid \text{$\mathfrak{n}_{\Theta_2}^kv = 0$ for some $k$}\}.
\end{align*}
If $\Theta_1 = \Pi$ and $\Theta_2 = \emptyset$, then $J_{\Theta_2,\Theta_1}(V)$ is called the \emph{Jacquet module} of $V$.
If $\Theta_1 = \Theta_2 = \Theta$, $J_{\Theta_2,\Theta_1}(V) = V$.
We will prove the following properties in this section:
\begin{itemize}
\item For $V\in\HC_{\Theta}$, $J_{\Theta_2,\Theta_1}(V)$ is independent of $\Theta_1$ and an object of $\HC_{\Theta_2}$.
\item For $V\in\HC_{\Theta_1}$ and $\Theta_3\subset\Theta_2\subset\Theta_1\subset\Pi$, we have $J_{\Theta_3,\Theta_2}\circ J_{\Theta_2,\Theta_1}(V)\simeq J_{\Theta_3,\Theta_1}(V)$.
\item The functor $J_{\Theta_2,\Theta_1}\colon\HC_{\Theta_1}\to\HC_{\Theta_2}$ is exact.
\end{itemize}
So the usual Jacquet functor is decomposed into the composite of $J_{\Theta_2,\Theta_1}$'s.

\begin{lem}\label{lem:n-homology preserve HC}
\begin{enumerate}
\item If $V\in\HC_\Theta^{G_\R}$ then $\wtsp_{\mu_1}(V)\in\HC_\Theta^{L_{\Theta_1,\R}}$ for $\mu_1\in\mathfrak{a}_{\Theta_1}^*$.
\item If $V\in\HC_\Theta^{G_\R}$ then $V/\overline{\mathfrak{n}}_{\Theta}V\in \HC_{\Theta}^{L_{\Theta,\R}}$.
\item If $V\in\HC_\Pi^{G_\R}$ then $V/\overline{\mathfrak{n}}_{\Theta}V\in \HC_{\Theta}^{L_{\Theta,\R}}$.
\item For $V\in\HC_\Theta$ and $\mu_1\in\mathfrak{a}_{\Theta_1}^*$, $\wtsp_{\mu_1}(V/(\mathfrak{m}_{\Theta_1}\cap \overline{\mathfrak{n}}_{\Theta_2})^kV)$ is a Harish-Chandra module of $L_{\Theta_2,\R}$.
\end{enumerate}
\end{lem}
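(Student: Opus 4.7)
My strategy is to verify, for each of the four statements, the three defining properties of the relevant $\HC$-category: the appropriate Lie-algebra/group-pair module structure, finite generation over the enveloping algebra, and $Z(\mathfrak{l})$-finiteness. The module-structure step is routine throughout. In (1), centrality of $\mathfrak{a}_{\Theta_1}$ in $\mathfrak{l}_{\Theta_1}$ together with the inclusion $K_\Theta(N_\Theta\cap L_{\Theta_1})\subset L_{\Theta_1}$ makes $\wtsp_{\mu_1}(V)$ a $(\mathfrak{l}_{\Theta_1},K_\Theta(N_\Theta\cap L_{\Theta_1}))$-module, since these subgroups centralize $\mathfrak{a}_{\Theta_1}$ and thereby preserve its generalized weight spaces. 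In (2) and (3), the relations $[\mathfrak{l}_\Theta,\overline{\mathfrak{n}}_\Theta]\subset\overline{\mathfrak{n}}_\Theta$ and the fact that $K_\Theta\subset M_\Theta$ normalizes $\overline{\mathfrak{n}}_\Theta$, combined with $N_\Theta\cap L_\Theta=\{1\}$, equip $V/\overline{\mathfrak{n}}_\Theta V$ with its $(\mathfrak{l}_\Theta,K_\Theta)$-structure.

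For finite generation in (1), I use PBW in the form $U(\mathfrak{g})=U(\overline{\mathfrak{n}}_{\Theta_1})U(\mathfrak{l}_{\Theta_1})U(\mathfrak{n}_{\Theta_1})$. Taking $\mathfrak{a}$-weight generators $v_1,\ldots,v_n$ of $V$ and using that local $\mathfrak{n}_\Theta$-nilpotence implies local $\mathfrak{n}_{\Theta_1}$-nilpotence (since $\mathfrak{n}_{\Theta_1}\subset\mathfrak{n}_\Theta$), each $U(\mathfrak{n}_{\Theta_1})v_i$ is finite-dimensional, whence $V=U(\overline{\mathfrak{n}}_{\Theta_1})U(\mathfrak{l}_{\Theta_1})\{w'_k\}$ for finitely many pure-weight $w'_k$ of $\mathfrak{a}$-weight $\nu_k$. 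Since the $\mathfrak{a}_{\Theta_1}$-weight $\lambda$ component of $U(\overline{\mathfrak{n}}_{\Theta_1})$ is finite-dimensional for every $\lambda$, the space $\wtsp_{\mu_1}(V)$ is finitely generated over $U(\mathfrak{l}_{\Theta_1})$ by the elements $\bar n\,w'_k$ with $\bar n$ ranging over the $(\mu_1-\nu_k|_{\mathfrak{a}_{\Theta_1}})$-weight space of $U(\overline{\mathfrak{n}}_{\Theta_1})$. The same decomposition applied to $\mathfrak{p}_\Theta$ handles finite generation in (2); for (3), where $V\in\HC_\Pi$ is not $\mathfrak{n}_\Theta$-nilpotent, I would invoke the classical Jacquet-module finiteness theorem of Hecht-Schmid~\cite{MR705884}.

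For the $Z(\mathfrak{l})$-finiteness I adapt the Harish-Chandra argument of Lemma~\ref{lem:HC,a_Theta decomp}(2). With $\gamma_1\colon Z(\mathfrak{g})\to Z(\mathfrak{l}_{\Theta_1})$ the HC homomorphism and $I=\Ann_{Z(\mathfrak{g})}V$ (of finite codimension), for a pure-weight vector $v\in\wtsp_{\mu_1}(V)$ and $a\in I$ I expand $a$ in PBW relative to $\mathfrak{p}_{\Theta_1}$ and extract the $\mathfrak{a}_{\Theta_1}$-weight $\mu_1$ component of $av=0$: terms containing a non-trivial $\mathfrak{n}_{\Theta_1}$-factor strictly raise the weight and can contribute at weight $\mu_1$ only by being paired with a compensating non-trivial $\overline{\mathfrak{n}}_{\Theta_1}$-factor, so $\gamma_1(a)v\in\wtsp_{\mu_1}(V)\cap\overline{\mathfrak{n}}_{\Theta_1}V$. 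In (2) the analogous computation gives $\gamma(I)\cdot(V/\overline{\mathfrak{n}}_\Theta V)=0$ outright because modding out by $\overline{\mathfrak{n}}_\Theta V$ kills all residual terms. For (4), observing that $\mathfrak{m}_{\Theta_1}\cap\overline{\mathfrak{n}}_{\Theta_2}$ has $\mathfrak{a}_{\Theta_1}$-weight zero yields the identification $\wtsp_{\mu_1}(V/(\mathfrak{m}_{\Theta_1}\cap\overline{\mathfrak{n}}_{\Theta_2})^k V)\simeq\wtsp_{\mu_1}(V)/(\mathfrak{m}_{\Theta_1}\cap\overline{\mathfrak{n}}_{\Theta_2})^k\wtsp_{\mu_1}(V)$; (1) places $\wtsp_{\mu_1}(V)$ in $\HC_\Theta^{L_{\Theta_1,\R}}$, and a variant of (2) inside $L_{\Theta_1,\R}$ with relative opposite radical $\mathfrak{m}_{\Theta_1}\cap\overline{\mathfrak{n}}_{\Theta_2}$ and higher powers produces the desired Harish-Chandra module of $L_{\Theta_2,\R}$.

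The main obstacle is upgrading the weight argument in (1) from $\gamma_1(I)\cdot\wtsp_{\mu_1}(V)\subset\wtsp_{\mu_1}(V)\cap\overline{\mathfrak{n}}_{\Theta_1}V$ to the annihilation of $\wtsp_{\mu_1}(V)$ by some power of $\gamma_1(I)$; once established, this yields $Z(\mathfrak{l}_{\Theta_1})$-finiteness via the finiteness of $Z(\mathfrak{l}_{\Theta_1})$ over $\gamma_1(Z(\mathfrak{g}))$. The key observation is that the $\mathfrak{a}_{\Theta_1}$-weights of $V$ are bounded above by local $\mathfrak{n}_\Theta$-nilpotence combined with finite generation, so $\wtsp_{\mu_1}(V)\cap\overline{\mathfrak{n}}_{\Theta_1}^k V=0$ for $k$ sufficiently large; one must then propagate the $\overline{\mathfrak{n}}_{\Theta_1}$-depth through successive applications of $\gamma_1(a)$, controlling the commutator $[\gamma_1(a),\overline{\mathfrak{n}}_{\Theta_1}]\subset\overline{\mathfrak{n}}_{\Theta_1}U(\mathfrak{g})$ and reapplying the weight argument to the intermediate vectors of strictly higher $\mathfrak{a}_{\Theta_1}$-weight.
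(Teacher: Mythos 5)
Your plan coincides with the paper's up to a point, but it diverges at the $Z(\mathfrak{l}_{\Theta_1})$-finiteness step of (1), and the divergence opens a real gap that you yourself flag but do not close. The paper sidesteps the ``propagation through $\overline{\mathfrak{n}}_{\Theta_1}$-depth'' entirely: having shown $\wtsp_{\mu_1}(V)$ is finitely generated over $U(\mathfrak{m}_{\Theta_1})$, it picks a finite-dimensional $\mathfrak{n}_{\Theta_1}$-\emph{stable} generating subspace $W'$ (possible because $\mathfrak{n}_{\Theta_1}\subset\mathfrak{n}_\Theta$ acts locally nilpotently), notes $W'\subset V_k=\{v:\mathfrak{n}_{\Theta_1}^kv=0\}$ for some $k$, invokes Lemma~\ref{lem:HC,a_Theta decomp} (applied with $\Theta$ replaced by $\Theta_1$) to get $Z(\mathfrak{l}_{\Theta_1})$-finiteness of $V_k$, and transfers it to $\wtsp_{\mu_1}(V)=U(\mathfrak{m}_{\Theta_1})W'$ using that $Z(\mathfrak{l}_{\Theta_1})$ is central in $U(\mathfrak{l}_{\Theta_1})$. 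Your direct Harish-Chandra route (tracking how powers of $\gamma_1(I)$ push vectors into $\overline{\mathfrak{n}}_{\Theta_1}^j V$) does not obviously gain a full level of $\overline{\mathfrak{n}}_{\Theta_1}$-depth per application, precisely because $[\gamma_1(a),\overline{\mathfrak{n}}_{\Theta_1}]\subset\overline{\mathfrak{n}}_{\Theta_1}U(\mathfrak{g})$ keeps you at depth one rather than pushing deeper; you call this ``the main obstacle'' and leave it unresolved. The established ingredient you should be reaching for is exactly Lemma~\ref{lem:HC,a_Theta decomp}, which already proved the $Z(\mathfrak{l})$-finiteness on $\mathfrak{n}$-torsion subspaces via Kostant's theorem and the Harish-Chandra homomorphism.

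There is a second, unflagged gap in your sketch of (4). After reducing via (1) to $\Theta_1=\Pi$, you must pass from the Harish-Chandra module $V'=V/\overline{\mathfrak{n}}_\Theta V$ of $L_{\Theta,\R}$ to $V'/(\mathfrak{m}_\Theta\cap\overline{\mathfrak{n}}_{\Theta_2})V'$. At that point $V'$ has \emph{no} nilpotence with respect to $\mathfrak{m}_\Theta\cap\mathfrak{n}_{\Theta_2}$, so a ``variant of (2)'' does not apply; you need statement (3), the Hecht--Schmid result, which is precisely why the paper includes (3) as a separate item. The paper's route for $k=1$ is: (2) to get $V'$, then (3) to get $V''=V/\overline{\mathfrak{n}}_{\Theta_2}V\in\HC_{\Theta_2}^{L_{\Theta_2,\R}}$; general $k$ then follows by induction using the exact sequence $\overline{\mathfrak{n}}_{\Theta_2}^k\otimes(V/\overline{\mathfrak{n}}_{\Theta_2}V)\to V/\overline{\mathfrak{n}}_{\Theta_2}^{k+1}V\to V/\overline{\mathfrak{n}}_{\Theta_2}^kV\to 0$. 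Your direct PBW/Harish-Chandra argument for (2) is correct (there the quotient by $\overline{\mathfrak{n}}_\Theta V$ kills the error terms, so $\gamma(I)$ annihilates the quotient outright), though the paper instead reduces (2) to (1) by the surjection $U(\mathfrak{m}_\Theta)W\twoheadrightarrow V/\overline{\mathfrak{n}}_\Theta V$, which is shorter once (1) is in hand.
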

\begin{proof}
(1)
It is easy to see that $\wtsp_{\mu_1}(V)$ is a $(\mathfrak{g},K_{\Theta}(M_{\Theta_1}\cap N_{\Theta}))$-module.
It is sufficient to prove that $\wtsp_{\mu_1}(V)$ is a finitely generated $U(\mathfrak{l}_{\Theta_1})$-module and $Z(\mathfrak{l}_{\Theta_1})$-finite.

Since $V\in \HC_\Theta$, $V$ is generated by a finite-dimensional subspace $W$ of $V$.
By Lemma~\ref{lem:HC,a_Theta decomp} (3), the action of $\mathfrak{a}_\Theta$ on $V$ is locally finite.
By the definition of $\HC_\Theta$, the action of $\mathfrak{n}_\Theta$ on $V$ is locally finite.
Hence we may assume that $W$ is $\mathfrak{n}_{\Theta}\oplus\mathfrak{a}_{\Theta}$-stable.
In particular, $W$ is $\mathfrak{n}_{\Theta_1}\oplus\mathfrak{a}_{\Theta_1}$-stable.
Therefore, we have $V = U(\mathfrak{m}_{\Theta_1})U(\overline{\mathfrak{n}}_{\Theta_1})W$.
From this, we get 
\[
	\wtsp_{\mu_1}(V) = U(\mathfrak{m}_{\Theta_1})\left(\sum_{\mu_1'\in\wt_{\mathfrak{a}_{\Theta_1}}(W)}\wtsp_{\mu_1 - \mu_1'}(U(\overline{\mathfrak{n}}_{\Theta_1}))\wtsp_{\mu_1'}(W)\right).
\]
Since $W$ is finite-dimensional, $\wt_{\mathfrak{a}_{\Theta_1}}(W)$ is finite.
For each $\mu_1'$, $\wtsp_{\mu_1 - \mu_1'}(U(\overline{\mathfrak{n}}_{\Theta_1}))$ is finite-dimensional.
Therefore, $\sum_{\mu_1'\in\wt_{\mathfrak{a}_{\Theta_1}}(W)}\wtsp_{\mu_1 - \mu_1'}(U(\overline{\mathfrak{n}}_{\Theta_1}))\wtsp_{\mu_1'}(W)$ is finite-dimensional.
Hence $\wtsp_{\mu_1}(V)$ is a finitely generated $U(\mathfrak{m}_{\Theta_1})$-module.

Put $V_k = \{v\in V\mid \mathfrak{n}_{\Theta_1}^kv = 0\}$.
Since $\wtsp_{\mu_1}(V)$ is finitely generated $U(\mathfrak{m}_{\Theta_1})$-module, we can take $\mathfrak{n}_{\Theta_1}$-stable subspace $W'$ such that $\wtsp_{\mu_1}(V)\subset U(\mathfrak{m}_{\Theta_1})W'$.
Since $W'$ is finite-dimensional, $W'\subset V_k$ for some $k\in\Z_{\ge 0}$.
By Lemma~\ref{lem:HC,a_Theta decomp}, $V_k$ is $Z(\mathfrak{l}_{\Theta_1})$-finite.
Hence $\wtsp_{\mu_1}(V)$ is $Z(\mathfrak{l}_{\Theta_1})$-finite.

(2)
As above, we can take a finite-dimensional $\mathfrak{n}_{\Theta}\oplus\mathfrak{a}_{\Theta}$-stable submodule $W$ which generates $V$ as a $\mathfrak{g}$-module.
Then we have $V = U(\overline{\mathfrak{n}}_{\Theta})U(\mathfrak{m}_{\Theta})W$.
Hence we get a surjective homomorphism $U(\mathfrak{m}_{\Theta})W\to V/\overline{\mathfrak{n}}_{\Theta}V$.
It is sufficient to prove that $U(\mathfrak{m}_{\Theta})W\in \HC_{\Theta}^{L_{\Theta,\R}}$.
Since $W$ is finite-dimensional, $W\subset\bigoplus_{\mu\in\Lambda}\wtsp_{\mu}(V)$ for a finite subset $\Lambda\subset\mathfrak{a}_{\Theta}^*$.
Each $\wtsp_\mu(V)$ is $\mathfrak{m}_\Theta$-stable.
Therefore, $U(\mathfrak{m}_\Theta)W\subset\bigoplus_{\mu\in\Lambda}\wtsp_{\mu}(V)$.
By (1), $\wtsp_{\mu}(V)\in\HC_{\Theta}^{L_{\Theta,\R}}$.
Hence we have $U(\mathfrak{m}_{\Theta})W\in\HC_{\Theta}^{L_{\Theta,\R}}$.

(3)
Recall that an object of $\HC_{\Pi}^{G_\R}$ is a Harish-Chandra module of $G_\R$.
Hence this is \cite[Proposition~2.24]{MR716371}.

(4)
By Lemma~\ref{lem:HC,a_Theta decomp}, we have $V = \bigoplus_{\mu_1\in\mathfrak{a}_{\Theta_1}^*}\wtsp_{\mu_1}(V)$.
Since $\mathfrak{m}_{\Theta_1}$ has an $\mathfrak{a}_{\Theta_1}$-weight $0$, the action of $\mathfrak{m}_\Theta$ preserves each $\Gamma_{\mu_1}(V)$.
Therefore, we have $\wtsp_{\mu_1}(V/(\mathfrak{m}_{\Theta_1}\cap \overline{\mathfrak{n}}_{\Theta_2})^kV) = \wtsp_{\mu_1}(V)/(\mathfrak{m}_{\Theta_1}\cap \overline{\mathfrak{n}}_{\Theta_2})^k\wtsp_{\mu_1}(V)$.
By (1), $\wtsp_{\mu_1}(V)\in\HC_{\Theta}^{L_{\Theta_1,\R}}$.
Hence we may assume that $\Theta_1 = \Pi$.
So we have $\mathfrak{m}_{\Theta_1}\cap\overline{\mathfrak{n}}_{\Theta_2} = \overline{\mathfrak{n}}_{\Theta_2}$.

By (2), $V' = V/\overline{\mathfrak{n}}_{\Theta}V\in \HC_{\Theta}^{L_{\Theta,\R}}$.
By (3), $V'' = V'/(\mathfrak{m}_{\Theta}\cap\overline{\mathfrak{n}}_{\Theta_2})V'\in\HC_{\Theta_2}^{L_{\Theta_2,\R}}$.
Since $V'' = V/\overline{\mathfrak{n}}_{\Theta_2}V$, we get the lemma for $k = 1$.

For a general $k$, we can prove the lemma by induction on $k$ using an exact sequence $\overline{\mathfrak{n}}_{\Theta_2}^k\otimes(V/\overline{\mathfrak{n}}_{\Theta_2}V)\to V/\overline{\mathfrak{n}}_{\Theta_2}^{k + 1}V\to V/\overline{\mathfrak{n}}_{\Theta_2}^{k}V\to 0$.
\end{proof}

We now prove that the length of an object of $\HC_\Theta$ is finite.
\begin{lem}\label{lem:suff cond for fin len}
Assume that a $(\mathfrak{g},N_\Theta)$-module $V$ satisfies the following conditions:
\begin{enumerate}
\item The module $V$ is $Z(\mathfrak{g})$-finite.
\item For all $\mu\in\mathfrak{a}_\Theta^*$, $\wtsp_\mu(V)$ has a finite length as a $\mathfrak{m}_\Theta$-module.
\end{enumerate}
Then $V$ has a finite length.
\end{lem}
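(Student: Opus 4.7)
The plan is to reduce finite length as a $(\mathfrak{g},N_\Theta)$-module to finite length as an $\mathfrak{l}_\Theta$-module, which in turn follows from hypothesis~(2) once one shows that only finitely many $\mathfrak{a}_\Theta$-weights appear in $V$.

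First I would upgrade $Z(\mathfrak{g})$-finiteness to $Z(\mathfrak{l}_\Theta)$-finiteness. The Harish-Chandra map $\varphi\colon Z(\mathfrak{g})\to Z(\mathfrak{l}_\Theta)$ recalled in the proof of Lemma~\ref{lem:HC,a_Theta decomp}(2) makes $Z(\mathfrak{l}_\Theta)$ into a finite $Z(\mathfrak{g})$-module. Setting $I=\Ann_{Z(\mathfrak{g})}V$, hypothesis~(1) says $Z(\mathfrak{g})/I$ is finite-dimensional, hence so is $Z(\mathfrak{l}_\Theta)/\varphi(I)Z(\mathfrak{l}_\Theta)$, and this quotient annihilates $V$.

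Second, since $\mathfrak{a}_\Theta$ is central in $\mathfrak{l}_\Theta$ we have $U(\mathfrak{a}_\Theta)\subset Z(\mathfrak{l}_\Theta)$, so $V$ admits only finitely many generalized $U(\mathfrak{a}_\Theta)$-characters; that is, $\wt_{\mathfrak{a}_\Theta}(V)$ is a finite set $F\subset\mathfrak{a}_\Theta^*$. On the other hand, Lemma~\ref{lem:HC,a_Theta decomp}(3) applies to $V$, giving $V=\bigoplus_{\mu\in\mathfrak{a}_\Theta^*}\wtsp_\mu(V)$. Combining these yields the finite decomposition
\[
V=\bigoplus_{\mu\in F}\wtsp_\mu(V).
\]

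Each summand $\wtsp_\mu(V)$ is stable under $\mathfrak{m}_\Theta$ (as $[\mathfrak{m}_\Theta,\mathfrak{a}_\Theta]=0$) and hence under $\mathfrak{l}_\Theta$, because $\mathfrak{a}_\Theta$ acts with single generalized eigenvalue $\mu$. By hypothesis~(2), $\wtsp_\mu(V)$ has finite $\mathfrak{m}_\Theta$-length; every $\mathfrak{l}_\Theta$-submodule is in particular an $\mathfrak{m}_\Theta$-submodule, so $\wtsp_\mu(V)$ also has finite $\mathfrak{l}_\Theta$-length. Summing over $F$ shows $V$ has finite length as an $\mathfrak{l}_\Theta$-module, and a fortiori as a $(\mathfrak{g},N_\Theta)$-module, since any $(\mathfrak{g},N_\Theta)$-submodule chain of $V$ is also an $\mathfrak{l}_\Theta$-submodule chain.

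The main obstacle is the finiteness of $\wt_{\mathfrak{a}_\Theta}(V)$; once one invokes the Harish-Chandra finiteness of $Z(\mathfrak{l}_\Theta)$ over $Z(\mathfrak{g})$ together with $U(\mathfrak{a}_\Theta)\subset Z(\mathfrak{l}_\Theta)$, the rest is a routine comparison of lengths across the inclusions $\mathfrak{m}_\Theta\subset\mathfrak{l}_\Theta\subset\mathfrak{g}$.
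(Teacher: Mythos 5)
There is a real gap in the second paragraph: you claim that $\varphi(I)Z(\mathfrak{l}_\Theta)$ annihilates $V$, so that $V$ is $Z(\mathfrak{l}_\Theta)$-finite and hence $\wt_{\mathfrak{a}_\Theta}(V)$ is finite. This is false. The Harish-Chandra map $\varphi\colon Z(\mathfrak{g})\to Z(\mathfrak{l}_\Theta)$ computes the $Z(\mathfrak{g})$-action only on the $\mathfrak{n}_\Theta$-invariants $V^{\mathfrak{n}_\Theta}$: for $z\in Z(\mathfrak{g})$ one has $z-\varphi(z)\in U(\mathfrak{g})\mathfrak{n}_\Theta$, which kills $V^{\mathfrak{n}_\Theta}$ but not all of $V$. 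Consequently $Z(\mathfrak{g})$-finiteness of $V$ does not transfer to $Z(\mathfrak{l}_\Theta)$-finiteness of $V$, and indeed $\wt_{\mathfrak{a}_\Theta}(V)$ is typically infinite. (For instance, for $\Theta=\emptyset$ and $V$ the Jacquet module of a Harish-Chandra module, which is exactly the situation in which this lemma is applied via Proposition~\ref{prop:image of Jacquet functor}, there are infinitely many $\mathfrak{a}$-weights.) Once the finiteness of $\wt_{\mathfrak{a}_\Theta}(V)$ fails, the ``routine comparison of lengths across $\mathfrak{m}_\Theta\subset\mathfrak{l}_\Theta\subset\mathfrak{g}$'' breaks down, since infinitely many weight spaces contribute.

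The paper's proof avoids this by applying $\varphi$ only where it is valid. It sets $J=\varphi(\Ann_{Z(\mathfrak{g})}V)Z(\mathfrak{l}_\Theta)$, observes that for every subquotient $V'$ the invariants $(V')^{\mathfrak{n}_\Theta}$ are annihilated by $J$, and therefore $(V')^{\mathfrak{n}_\Theta}\subset\bigoplus_{\mu\in\Lambda}\wtsp_\mu(V')$ for a fixed finite set $\Lambda$ determined by $J\cap U(\mathfrak{a}_\Theta)$. Since $V'$ is a $(\mathfrak{g},N_\Theta)$-module, $\mathfrak{n}_\Theta$ acts locally nilpotently, so $(V')^{\mathfrak{n}_\Theta}\ne 0$; this bounds the length of $V$ by $\sum_{\mu\in\Lambda}\mathrm{length}_{\mathfrak{m}_\Theta}\wtsp_\mu(V)$. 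To repair your argument you must work with $\mathfrak{n}_\Theta$-invariants of subquotients rather than with all of $V$, and you must use the $(\mathfrak{g},N_\Theta)$-module structure to guarantee that these invariants are nonzero.
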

\begin{proof}
Let $\varphi\colon Z(\mathfrak{g})\to Z(\mathfrak{l}_\Theta)$ be a homomorphism defined in the proof of Lemma~\ref{lem:HC,a_Theta decomp}.
Set $J = \varphi(\Ann_{Z(\mathfrak{g})}V)Z(\mathfrak{l}_\Theta)$.
Then $J$ has a finite codimension.
In particular, there exists a finite subset $\Lambda\subset\mathfrak{a}_\Theta^*$ which gives all maximal ideals of $U(\mathfrak{a}_\Theta)/(J\cap U(\mathfrak{a}_\Theta))$.
Let $V'$ be a subquotient of $V$.
Then we have $J(V')^{\mathfrak{n}_\Theta} = 0$, hence $(V')^{\mathfrak{n}_\Theta} \subset \bigoplus_{\mu\in\Lambda}\Gamma_{\mu}(V')$.
Since $V'$ is a $(\mathfrak{g},N_\Theta)$-module, the space $(V')^{\mathfrak{n}_\Theta}$ is non-zero.
Hence the length of the $\mathfrak{g}$-module $V$ is less than or equal to the sum of the length of $\mathfrak{m}_\Theta$-modules $\wtsp_\mu(V)$ for $\mu\in\Lambda$.
It is finite by the assumption.
\end{proof}

\begin{cor}
Each object in $\HC_\Theta$ has a finite length.
\end{cor}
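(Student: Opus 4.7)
The plan is to apply Lemma~\ref{lem:suff cond for fin len} to an arbitrary $V\in\HC_\Theta$. The first hypothesis, that $V$ be a $Z(\mathfrak{g})$-finite $(\mathfrak{g},N_\Theta)$-module, is immediate from the definition of $\HC_\Theta$.

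For the second hypothesis I would invoke Lemma~\ref{lem:n-homology preserve HC}(1) with $\Theta_1=\Theta$: for every $\mu\in\mathfrak{a}_\Theta^*$, the generalized weight space $\wtsp_{\mu}(V)$ lies in $\HC_\Theta^{L_{\Theta,\R}}$. For the reductive group $L_{\Theta,\R}$ the set $\Theta$ is its full collection of simple restricted roots and $K_\Theta$ is a maximal compact subgroup (the factor $A_\Theta$ being a split torus contributes nothing compact), so $\HC_\Theta^{L_{\Theta,\R}}$ coincides with the category of Harish-Chandra modules of $L_{\Theta,\R}$, whose objects have finite length by the Wallach finiteness theorem recalled just before Lemma~\ref{lem:HC,a_Theta decomp}.

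It remains to upgrade this to finite length of $\wtsp_\mu(V)$ as a pure $\mathfrak{m}_\Theta$-module, which is what Lemma~\ref{lem:suff cond for fin len} demands. Since $\mathfrak{a}_\Theta$ is central in $\mathfrak{l}_\Theta$ and acts on $\wtsp_\mu(V)$ with the single generalized weight $\mu$, the argument in the proof of Lemma~\ref{lem:HC,a_Theta decomp}(1) shows that $(\mathfrak{a}_\Theta-\mu)^N$ annihilates all of $\wtsp_\mu(V)$ for some uniform $N$; filtering by the nilpotency order of $\mathfrak{a}_\Theta-\mu$ reduces to the $\mathfrak{a}_\Theta$-semisimple case, in which $(\mathfrak{l}_\Theta,K_\Theta)$-submodules and $(\mathfrak{m}_\Theta,K_\Theta)$-submodules coincide, and finite length as an $(\mathfrak{m}_\Theta,K_\Theta)$-module yields finite length as a plain $\mathfrak{m}_\Theta$-module by standard facts about Harish-Chandra modules. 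This last passage between the various notions of length is the only non-cosmetic point in the plan; the rest is pure bookkeeping.
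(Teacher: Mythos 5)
Your proof is correct and follows essentially the same route as the paper: it applies Lemma~\ref{lem:suff cond for fin len}, verifying its second hypothesis via Lemma~\ref{lem:n-homology preserve HC}. The only differences are cosmetic: the paper cites part (4) of that lemma with $\Theta_1=\Theta_2=\Theta$ while you cite part (1) with $\Theta_1=\Theta$ (these amount to the same statement in this degenerate case), and you usefully spell out the passage from finite length in the Harish--Chandra sense to finite length as a plain $\mathfrak{m}_\Theta$-module, a step the paper leaves implicit.
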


\begin{proof}
The condition of (1) in the previous lemma is satisfied by the definition of $\HC_\Theta$.
For $V\in \HC_\Theta$ and $\mu\in \mathfrak{a}_\Theta^*$, $\Gamma_{\mu}(V)$ is a Harish-Chandra module of $L_{\Theta,\R}$ by Lemma~\ref{lem:n-homology preserve HC} (4). (Take $\Theta_1 = \Theta_2 = \Theta$.)
Hence it has a finite length.
By the previous lemma, we get the corollary.
\end{proof}

For a subset $\Lambda\subset\mathfrak{a}_{\Theta}^*$, put $\Lambda - \Z_{\ge 0}\Pi|_{\mathfrak{a}_{\Theta}} = \{\mu - \sum_{\alpha\in\Pi}n_\alpha \alpha|_{\mathfrak{a}_{\Theta}}\mid \mu\in \Lambda,\ n_\alpha\in\Z_{\ge 0}\}$.

\begin{lem}\label{lem:wt of Jhat}
For $V\in\HC_{\Theta}$, there exists a finite subset $\Lambda_2$ of $\mathfrak{a}_{\Theta_2}^*$ such that $\wt_{\mathfrak{a}_{\Theta_2}}(\widehat{J}_{\Theta_2,\Theta_1}(V))\subset\Lambda_2 - \Z_{\ge 0}\Pi|_{\mathfrak{a}_{\Theta_2}}$.
\end{lem}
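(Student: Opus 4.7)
The plan is to reduce the statement first to a uniform bound $\wt_{\mathfrak{a}_{\Theta_2}}(V/(\overline{\mathfrak{n}}')^k V) \subset \Lambda_2 - \Z_{\geq 0}\Pi|_{\mathfrak{a}_{\Theta_2}}$ (with $\Lambda_2$ finite and independent of $k$), where $\overline{\mathfrak{n}}' := \mathfrak{m}_{\Theta_1} \cap \overline{\mathfrak{n}}_{\Theta_2}$. A generalized $\mathfrak{a}_{\Theta_2}$-weight vector of $\widehat{J}_{\Theta_2,\Theta_1}(V) = \varprojlim_k V/(\overline{\mathfrak{n}}')^k V$ has nonzero image in some $V/(\overline{\mathfrak{n}}')^k V$ of the same generalized weight, so this reduction is immediate. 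Next, the $(\overline{\mathfrak{n}}')$-adic filtration gives an $\mathfrak{a}_{\Theta_2}$-equivariant surjection $(\overline{\mathfrak{n}}')^{\otimes j}\otimes_\C V/\overline{\mathfrak{n}}' V \twoheadrightarrow (\overline{\mathfrak{n}}')^j V/(\overline{\mathfrak{n}}')^{j+1} V$, and since the $\mathfrak{a}_{\Theta_2}$-weights of $\overline{\mathfrak{n}}'$ (whose roots lie in $\Sigma^- \cap \langle\Theta_1\rangle \setminus \langle\Theta_2\rangle$) all lie in $-\Z_{>0}\Pi|_{\mathfrak{a}_{\Theta_2}}$, it is enough to bound $\wt_{\mathfrak{a}_{\Theta_2}}(V/\overline{\mathfrak{n}}' V)$ alone.

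To produce such a bound, I would choose a finite-dimensional $K_\Theta (\mathfrak{n}_\Theta \oplus \mathfrak{a}_\Theta)$-stable generating subspace $W_0 \subset V$ (as in the proof of Lemma~\ref{lem:n-homology preserve HC}(1)) and set $V^{(0)} := U(\mathfrak{l}_{\Theta_1}) W_0$. Since $\mathfrak{n}_{\Theta_1} \subset \mathfrak{n}_\Theta$ stabilizes $W_0$, the PBW decomposition for $\mathfrak{g} = \overline{\mathfrak{n}}_{\Theta_1} \oplus \mathfrak{p}_{\Theta_1}$ yields $V = U(\overline{\mathfrak{n}}_{\Theta_1}) V^{(0)}$. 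One checks that $V^{(0)} \in \HC_\Theta^{L_{\Theta_1,\R}}$ (finitely generated, $(K_\Theta (N_\Theta \cap M_{\Theta_1}))$-equivariant, $Z(\mathfrak{l}_{\Theta_1})$-finite), and crucially $V^{(0)}$ has only finitely many $\mathfrak{a}_{\Theta_1}$-weights, namely those of $W_0$. Applying Lemma~\ref{lem:n-homology preserve HC}(4) with the ambient group $L_{\Theta_1,\R}$ in place of $G_\R$, each $\wtsp_{\mu_1}(V^{(0)}/\overline{\mathfrak{n}}' V^{(0)})$ is an HC module of $L_{\Theta_2,\R}$, hence is $Z(\mathfrak{l}_{\Theta_2})$-finite and so has finitely many $\mathfrak{a}_{\Theta_2}$-weights. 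Summing over the finite set of relevant $\mu_1$ gives a finite set $\Lambda_2 := \wt_{\mathfrak{a}_{\Theta_2}}(V^{(0)}/\overline{\mathfrak{n}}' V^{(0)})$.

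The hardest step, which I expect to be the main obstacle, is to transfer this bound from $V^{(0)}/\overline{\mathfrak{n}}' V^{(0)}$ to $V/\overline{\mathfrak{n}}' V$. I would use the $\overline{\mathfrak{n}}_{\Theta_1}$-adic filtration $V_n := U(\overline{\mathfrak{n}}_{\Theta_1})^{\leq n} V^{(0)}$; the associated graded $\mathrm{gr}_n V$ is a quotient of $\mathrm{Sym}^n(\overline{\mathfrak{n}}_{\Theta_1}) \otimes V^{(0)}$. Since $[\overline{\mathfrak{n}}', \overline{\mathfrak{n}}_{\Theta_1}] \subset \overline{\mathfrak{n}}_{\Theta_1}$, for $x \in \overline{\mathfrak{n}}'$, $y_1,\dots,y_n \in \overline{\mathfrak{n}}_{\Theta_1}$, $v \in V^{(0)}$ the element $xy_1\cdots y_n v$ lies in $\overline{\mathfrak{n}}' V$, and the PBW expansion yields the relation
\[
y_1\cdots y_n \otimes xv + \sum_{i=1}^n y_1\cdots [x,y_i]\cdots y_n \otimes v \equiv 0 \quad \text{in } \mathrm{gr}_n(V/\overline{\mathfrak{n}}' V),
\]
whose terms all share the same $\mathfrak{a}_{\Theta_2}$-weight. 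Iterating this identity pushes any $\overline{\mathfrak{n}}'$-factor from the $V^{(0)}$-component into the $\mathrm{Sym}$-component, so that each generalized $\mathfrak{a}_{\Theta_2}$-weight of $V/\overline{\mathfrak{n}}' V$ arises as the sum of a weight of $\mathrm{Sym}(\overline{\mathfrak{n}}_{\Theta_1})$ (in $-\Z_{\geq 0}\Pi|_{\mathfrak{a}_{\Theta_2}}$) and a weight of $V^{(0)}/\overline{\mathfrak{n}}' V^{(0)}$ (in $\Lambda_2$). The delicate point is to justify the termination of this replacement in spite of the fact that elements of $V^{(0)}$ may have infinite $\overline{\mathfrak{n}}'$-depth; one handles this either by a simultaneous filtration by $\overline{\mathfrak{n}}'$-depth and $\overline{\mathfrak{n}}_{\Theta_1}$-degree, or by an Artin-Rees-type spectral sequence argument combined with the weight-preservation of the displayed relation.
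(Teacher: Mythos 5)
Your overall architecture matches the paper's: reduce to bounding $\wt_{\mathfrak{a}_{\Theta_2}}(V/\mathfrak{c}V)$ where $\mathfrak{c}=\overline{\mathfrak{n}}'=\mathfrak{m}_{\Theta_1}\cap\overline{\mathfrak{n}}_{\Theta_2}$ (your graded reduction is the same in substance as the paper's induction on $k$ via the exact sequence $\mathfrak{c}\otimes(V/\mathfrak{c}^kV)\to V/\mathfrak{c}^{k+1}V\to V/\mathfrak{c}V\to 0$), with Lemma~\ref{lem:n-homology preserve HC}(4) as the essential input. But the way you handle the $k=1$ case diverges from the paper, and that is exactly where the difficulty concentrates.

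The paper does \emph{not} pass through a subspace such as $V^{(0)}=U(\mathfrak{l}_{\Theta_1})W_0$ and does \emph{not} attempt to transfer a bound along the $\overline{\mathfrak{n}}_{\Theta_1}$-adic filtration. Instead it uses the $\mathfrak{a}_{\Theta_1}$-weight decomposition $V/\mathfrak{c}V=\bigoplus_{\mu_1}\wtsp_{\mu_1}(V/\mathfrak{c}V)$ (available because $\mathfrak{c}$ has $\mathfrak{a}_{\Theta_1}$-weight $0$), records the bound $\wt_{\mathfrak{a}_{\Theta_1}}(V)\subset\Lambda_1-\Z_{\ge 0}\Pi|_{\mathfrak{a}_{\Theta_1}}$, and invokes Lemma~\ref{lem:n-homology preserve HC}(4) directly on each $\wtsp_{\mu_1}(V/\mathfrak{c}V)$. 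Taking $\Lambda_2$ to be the $\mathfrak{a}_{\Theta_2}$-weights of those pieces with $\mu_1\in\Lambda_1$ gives the desired finite set without ever having to commute elements of $\overline{\mathfrak{n}}'$ past $U(\overline{\mathfrak{n}}_{\Theta_1})$.

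The step you correctly flag as the hardest one is where your argument is genuinely incomplete. The relation you write in $\mathrm{gr}_n(V/\overline{\mathfrak{n}}'V)$ is correct (using $[\overline{\mathfrak{n}}',\overline{\mathfrak{n}}_{\Theta_1}]\subset\overline{\mathfrak{n}}_{\Theta_1}$), but the iteration ``push the $\overline{\mathfrak{n}}'$-factor into the $\mathrm{Sym}$-component'' does not obviously terminate: it replaces $y_1\cdots y_n\otimes xv'$ with sums of $y_1\cdots[x,y_i]\cdots y_n\otimes v'$ \emph{of the same $\overline{\mathfrak{n}}_{\Theta_1}$-degree and $\mathfrak{a}_{\Theta_2}$-weight}, so neither a degree drop nor a weight drop forces termination. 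What one actually needs is that every element of $V^{(0)}$ has finite $\overline{\mathfrak{n}}'$-depth, i.e.\ $\bigcap_k(\overline{\mathfrak{n}}')^kV^{(0)}=0$; this is true, but in the paper it is established only via Lemma~\ref{lem:wt sp is stable} and the injectivity $V\hookrightarrow\widehat{J}$ (Section~\ref{sec:The geometric Jacquet functor}), both of which logically \emph{depend} on Lemma~\ref{lem:wt of Jhat}, so invoking them here would be circular. You would have to prove the Krull-intersection statement for $V^{(0)}$ independently (say by Artin--Rees for the finitely generated $U(\mathfrak{m}_{\Theta_1})$-module $V^{(0)}$ and the two-sided nature of $\overline{\mathfrak{n}}'U(\mathfrak{m}_{\Theta_1})$), and then still carefully set up the double filtration you allude to. The paper's route via $\mathfrak{a}_{\Theta_1}$-weight spaces sidesteps all of this, which is why it is the shorter and safer argument.
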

\begin{proof}
Put $\mathfrak{c} = \mathfrak{m}_{\Theta_1}\cap\overline{\mathfrak{n}}_{\Theta_2}$.
As in the proof of Lemma~\ref{lem:n-homology preserve HC}, we can take a finite-dimensional $\mathfrak{n}_{\Theta_1}\oplus\mathfrak{a}_{\Theta_1}$-stable subspace $W$ such that $V = U(\mathfrak{g})W$.
Put $\Lambda_1 = \wt_{\mathfrak{a}_{\Theta_1}}(W)$.
Then this is finite and, since $V = U(\mathfrak{m}_\Theta)U(\overline{\mathfrak{n}}_\Theta)W$, $\wt_{\mathfrak{a}_{\Theta_1}}(V)\subset \Lambda_1-\Z_{\ge 0}\Pi|_{\mathfrak{a}_{\Theta_1}}$.
Set $\Lambda_2 = \bigcup_{\mu_1\in\Lambda_1}\wt_{\mathfrak{a}_{\Theta_2}}(\wtsp_{\mu_1}(V/\mathfrak{c}V)) = \{\mu_2\in \wt_{\mathfrak{a}_{\Theta_2}}(V/\mathfrak{c}V)\mid \mu_2|_{\mathfrak{a}_{\Theta_1}}\in\Lambda_1\}$.
By Lemma~\ref{lem:n-homology preserve HC} (4), $\Gamma_{\mu_1}(V/\mathfrak{c}V)$ is a Harish-Chandra module of $L_{\Theta_2,\R}$.
In particular, it is $Z(\mathfrak{l}_\Theta)$-finite.
Therefore, it is $U(\mathfrak{a}_\Theta)$-finite since $\mathfrak{a}_\Theta$ is a subalgebra of the center of $\mathfrak{l}_\Theta$.
Hence $\wt_{\mathfrak{a}_{\Theta_2}}(\Gamma_{\mu_1}(V/\mathfrak{c}V))$ is finite.
This implies that $\Lambda_2$ is finite.
We also have $\wt_{\mathfrak{a}_{\Theta_2}}(V/\mathfrak{c}V)\subset\Lambda_2 - \Z_{\ge 0}\Pi|_{\mathfrak{a}_{\Theta_2}}$.

We prove $\wt_{\mathfrak{a}_{\Theta_2}}(V/\mathfrak{c}^kV)\subset \Lambda_2 - \Z_{\ge 0}\Pi|_{\mathfrak{a}_{\Theta_2}}$ by induction on $k$.
Then we get the lemma.
From an exact sequence $\mathfrak{c}\otimes (V/\mathfrak{c}^kV)\to V/\mathfrak{c}^{k + 1}V\to V/\mathfrak{c}V\to 0$, we have
\[
	\wt(V/\mathfrak{c}^{k + 1}V)\subset \wt(\mathfrak{c}\otimes (V/\mathfrak{c}^kV))\cup \wt(V/\mathfrak{c}V)\subset \Lambda_2 - \Z_{\ge 0}\Pi|_{\mathfrak{a}_{\Theta_2}}
\]
by the inductive hypothesis.
\end{proof}

\begin{lem}\label{lem:decomp int we sps of J}
For $V\in \HC_\Theta$, we have $J_{\Theta_2,\Theta_1}(V) = \bigoplus_{\mu_2\in \mathfrak{a}_{\Theta_2}^*}\wtsp_{\mu_2}(\widehat{J}_{\Theta_2,\Theta_1}(V))$.
Therefore, $\Gamma_{\mu_2}(\widehat{J}_{\Theta_2,\Theta_1}(V)) = \Gamma_{\mu_2}(J_{\Theta_2,\Theta_1}(V))$.
\end{lem}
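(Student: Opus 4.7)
Write $\widehat J := \widehat J_{\Theta_2,\Theta_1}(V)$, $J := J_{\Theta_2,\Theta_1}(V)$, and $\mathfrak{c} := \mathfrak{m}_{\Theta_1}\cap\overline{\mathfrak{n}}_{\Theta_2}$. Since $J\subset \widehat J$, the containment $\Gamma_{\mu_2}(J)\subset \Gamma_{\mu_2}(\widehat J)$ is automatic, so both assertions of the lemma will follow from the single equality $J=\bigoplus_{\mu_2\in\mathfrak{a}_{\Theta_2}^*}\Gamma_{\mu_2}(\widehat J)$. The plan is to prove the two containments separately: $(\supset)$ using Lemma~\ref{lem:wt of Jhat}, and $(\subset)$ using Lemma~\ref{lem:HC,a_Theta decomp}.

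For $(\supset)$, take $v\in \Gamma_{\mu_2}(\widehat J)$; I will show $\mathfrak{n}_{\Theta_2}^kv=0$ for $k$ large. The standard $\mathfrak{a}_{\Theta_2}$-eigenvalue computation gives that if $X_1,\dots,X_k\in \mathfrak{n}_{\Theta_2}$ are root vectors of weights $\alpha_1,\dots,\alpha_k\in\mathfrak{a}_{\Theta_2}^*$, then $X_1\cdots X_kv\in \Gamma_{\mu_2+\alpha_1+\cdots+\alpha_k}(\widehat J)$. Each $\alpha_i$ is a nonzero nonnegative combination of $\{\alpha|_{\mathfrak{a}_{\Theta_2}}\mid \alpha\in\Pi\setminus\Theta_2\}$, so for a strictly $(\Pi\setminus\Theta_2)$-dominant $H\in\mathfrak{a}_{\Theta_2}$ we will have $\langle H,\alpha_i\rangle\ge c$ for a uniform constant $c>0$. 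On the other hand, Lemma~\ref{lem:wt of Jhat} forces $\langle H,\mu_2+\alpha_1+\cdots+\alpha_k\rangle\le \max_{\lambda\in\Lambda_2}\langle H,\lambda\rangle$ whenever the target weight space is nonzero. These two bounds give a uniform upper bound on $k$, so $\mathfrak{n}_{\Theta_2}^kv=0$ for $k$ sufficiently large, i.e.\ $v\in J$.

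For $(\subset)$, I will apply Lemma~\ref{lem:HC,a_Theta decomp} to $\widehat J$ itself, with the role of $\Theta$ there played by $\Theta_2$. Since $\mathfrak{l}_{\Theta_2}\subset \mathfrak{l}_{\Theta_1}$ normalizes both $\mathfrak{m}_{\Theta_1}$ and $\overline{\mathfrak{n}}_{\Theta_2}$, the subspace $\mathfrak{c}$ is $\ad\mathfrak{l}_{\Theta_2}$-stable, so each $V/\mathfrak{c}^kV$ is an $\mathfrak{l}_{\Theta_2}$-module and $\widehat J$ inherits one; the full $\mathfrak{g}$-module structure on $\widehat J$, in particular the $\mathfrak{n}_{\Theta_2}$-action already used in the definition of $J$, is the standard one on Jacquet-type completions. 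Further, $Z(\mathfrak{g})$ commutes with $\mathfrak{c}$, so $\Ann_{Z(\mathfrak{g})}V$ kills each $V/\mathfrak{c}^kV$ and hence $\widehat J$, making $\widehat J$ a $Z(\mathfrak{g})$-finite $\mathfrak{g}$-module. Parts~(1) and~(2) of Lemma~\ref{lem:HC,a_Theta decomp} will then give that $\widehat J_k:=\{v\in\widehat J\mid \mathfrak{n}_{\Theta_2}^kv=0\}$ is $Z(\mathfrak{l}_{\Theta_2})$-finite, hence $U(\mathfrak{a}_{\Theta_2})$-locally finite, so it splits as $\widehat J_k=\bigoplus_{\mu_2}\Gamma_{\mu_2}(\widehat J_k)$. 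Taking the union over $k$ will give $J=\bigoplus_{\mu_2}\Gamma_{\mu_2}(J)\subset \bigoplus_{\mu_2}\Gamma_{\mu_2}(\widehat J)$.

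The main technical point is identifying $\widehat J$ as an honest $\mathfrak{g}$-module (so that $\mathfrak{n}_{\Theta_2}$ acts on it, which is needed both for the definition of $J$ and for the application of Lemma~\ref{lem:HC,a_Theta decomp}); this is standard for completions of the form $\varprojlim V/\mathfrak{c}^kV$ and is implicit in the setup of the statement. Granted it, both containments are purely formal consequences of Lemmas~\ref{lem:HC,a_Theta decomp} and~\ref{lem:wt of Jhat}.
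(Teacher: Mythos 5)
Your proof is correct and follows the same two-pronged strategy as the paper: Lemma~\ref{lem:wt of Jhat} gives the containment $\bigoplus_{\mu_2}\Gamma_{\mu_2}(\widehat J)\subset J$ via the weight-raising argument, and Lemma~\ref{lem:HC,a_Theta decomp} gives the reverse containment via the generalized weight-space decomposition of $J$. The only cosmetic difference is that you rederive part~(3) of Lemma~\ref{lem:HC,a_Theta decomp} from parts~(1) and~(2) applied to $\widehat J$, whereas the paper invokes part~(3) directly applied to $J$; your remark that $\widehat J$ must carry a genuine $\mathfrak g$-module structure for any of this to make sense is an implicit but standard point that the paper indeed leaves unstated.
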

\begin{proof}
By Lemma~\ref{lem:wt of Jhat}, the right hand side is contained in the left hand side.
On the other hand, by Lemma~\ref{lem:HC,a_Theta decomp} (3), we have $J_{\Theta_2,\Theta_1}(V) = \bigoplus_{\mu_2\in\mathfrak{a}_{\Theta_2}^*}\wtsp_{\mu_2}(J_{\Theta_2,\Theta_1}(V))$.
This is a subspace of the right hand side.
\end{proof}

Notice that $V/(\mathfrak{m}_{\Theta_1}\cap \overline{\mathfrak{n}}_{\Theta_2})^k V\simeq \widehat{J}_{\Theta_2,\Theta_1}(V)/(\mathfrak{m}_{\Theta_1}\cap \overline{\mathfrak{n}}_{\Theta_2})^k \widehat{J}_{\Theta_2,\Theta_1}(V)$ by the definition.

\begin{lem}\label{lem:wt sp is stable}
For $\mu_2\in \mathfrak{a}_{\Theta_2}^*$, there exists $k\in\Z_{\ge 0}$ such that $\wtsp_{\mu_2}(\widehat{J}_{\Theta_2,\Theta_1}(V))\to \wtsp_{\mu_2}(V/(\mathfrak{m}_{\Theta_1}\cap \overline{\mathfrak{n}}_{\Theta_2})^k V)$ is an isomorphism.
\end{lem}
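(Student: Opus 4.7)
Set $\mathfrak{c}=\mathfrak{m}_{\Theta_1}\cap\overline{\mathfrak{n}}_{\Theta_2}$ and $\widehat{V}=\widehat{J}_{\Theta_2,\Theta_1}(V)$. The plan is to show that the inverse system $\{\Gamma_{\mu_2}(V/\mathfrak{c}^kV)\}_k$ stabilizes for $k\gg 0$ and then to identify the stable term with $\Gamma_{\mu_2}(\widehat V)$. First I verify that each $V/\mathfrak{c}^kV$ decomposes as a direct sum of generalized $\mathfrak{a}_{\Theta_2}$-weight spaces: by Lemma~\ref{lem:HC,a_Theta decomp}(3), $V$ is $\mathfrak{a}_\Theta$-semisimple and hence $\mathfrak{a}_{\Theta_1}$-semisimple (since $\mathfrak{a}_{\Theta_1}\subset\mathfrak{a}_\Theta$), yielding $V/\mathfrak{c}^kV=\bigoplus_{\mu_1}\Gamma_{\mu_1}(V/\mathfrak{c}^kV)$. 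By Lemma~\ref{lem:n-homology preserve HC}(4), each $\Gamma_{\mu_1}(V/\mathfrak{c}^kV)$ is a Harish-Chandra module of $L_{\Theta_2,\R}$, hence $U(\mathfrak{a}_{\Theta_2})$-finite (as $\mathfrak{a}_{\Theta_2}\subset Z(\mathfrak{l}_{\Theta_2})$), and splits further into $\mathfrak{a}_{\Theta_2}$-generalized weight spaces.

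Next I bound the $\mathfrak{a}_{\Theta_2}$-weights of $\mathfrak{c}^kV/\mathfrak{c}^{k+1}V$. Since $\mathfrak{c}$ is spanned by root vectors for $-\alpha$ with $\alpha$ a nonnegative integer combination of simple roots in $\Theta_1$ involving at least one root of $\Theta_1\setminus\Theta_2$, the $\mathfrak{a}_{\Theta_2}$-weights of $\mathfrak{c}$ take the form $-\sum_{\beta\in\Theta_1\setminus\Theta_2}n_\beta\beta|_{\mathfrak{a}_{\Theta_2}}$ with $n_\beta\in\Z_{\geq 0}$ and $\sum_\beta n_\beta\geq 1$. Using the surjection $\mathfrak{c}^{\otimes k}\otimes(V/\mathfrak{c}V)\twoheadrightarrow\mathfrak{c}^kV/\mathfrak{c}^{k+1}V$ together with Lemma~\ref{lem:wt of Jhat}, every $\mathfrak{a}_{\Theta_2}$-weight $\mu_2$ of $\mathfrak{c}^kV/\mathfrak{c}^{k+1}V$ satisfies $\lambda-\mu_2=\sum_{\beta\in\Pi\setminus\Theta_2}q_\beta\beta|_{\mathfrak{a}_{\Theta_2}}$ for some $\lambda\in\Lambda_2$ with $q_\beta\in\Z_{\geq 0}$ and $\sum_{\beta\in\Theta_1\setminus\Theta_2}q_\beta\geq k$. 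Since $\{\beta|_{\mathfrak{a}_{\Theta_2}}:\beta\in\Pi\setminus\Theta_2\}$ is linearly independent in $\mathfrak{a}_{\Theta_2}^*$, the $q_\beta$ are uniquely determined by $(\lambda,\mu_2)$ whenever such a decomposition exists; maximizing over the finite set $\Lambda_2$ yields an absolute bound $k\leq N(\mu_2)$. Consequently $\Gamma_{\mu_2}(\mathfrak{c}^kV/\mathfrak{c}^{k+1}V)=0$ for $k>N(\mu_2)$, so applying the exact functor $\Gamma_{\mu_2}$ to
\[
0\to\mathfrak{c}^kV/\mathfrak{c}^{k+1}V\to V/\mathfrak{c}^{k+1}V\to V/\mathfrak{c}^kV\to 0
\]
shows that $\Gamma_{\mu_2}(V/\mathfrak{c}^{k+1}V)\to\Gamma_{\mu_2}(V/\mathfrak{c}^kV)$ is an isomorphism for every $k\geq k_0:=N(\mu_2)$.

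Finally, I check that the canonical map $\Gamma_{\mu_2}(\widehat V)\to\Gamma_{\mu_2}(V/\mathfrak{c}^{k_0}V)$ is an isomorphism. Injectivity is immediate because $\widehat V=\varprojlim V/\mathfrak{c}^kV$ and the stabilized isomorphisms, combined with the surjective projections for $k\leq k_0$, propagate any vanishing to every level. For surjectivity, lift $w\in\Gamma_{\mu_2}(V/\mathfrak{c}^{k_0}V)$ through the stabilized isomorphisms and projections to obtain a compatible sequence $(w_k)\in\widehat V$; because every transition and stabilization map is $\mathfrak{a}_{\Theta_2}$-equivariant, any integer $n$ with $(X-\mu_2(X))^n w_{k_0}=0$ for all $X\in\mathfrak{a}_{\Theta_2}$ also annihilates every $w_k$, placing $(w_k)$ in $\Gamma_{\mu_2}(\widehat V)$.

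The delicate point is the first step: showing that $V/\mathfrak{c}^kV$ is $\mathfrak{a}_{\Theta_2}$-semisimple in the generalized sense even though $V$ itself need not be, so that the exact sequence of weight spaces used in the second step makes sense. Once this decomposition is in hand, the weight combinatorics and the passage to the inverse limit are routine.
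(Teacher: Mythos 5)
Your argument is correct, but it routes through a different intermediate statement than the paper does. You establish the stabilization of the inverse system $\{\Gamma_{\mu_2}(V/\mathfrak{c}^kV)\}_k$ by computing $\Gamma_{\mu_2}$ of the graded pieces $\mathfrak{c}^kV/\mathfrak{c}^{k+1}V$ and then identify $\Gamma_{\mu_2}(\widehat{J}_{\Theta_2,\Theta_1}(V))$ with the stable value by a lifting argument through the limit. The paper instead works directly with the short exact sequence
\[
0\to \mathfrak{c}^k \widehat{J}_{\Theta_2,\Theta_1}(V)\to \widehat{J}_{\Theta_2,\Theta_1}(V)\to V/\mathfrak{c}^k V\to 0
\]
and shows $\Gamma_{\mu_2}(\mathfrak{c}^k\widehat{J}_{\Theta_2,\Theta_1}(V))=0$ once $k$ is large, using the same weight bound from Lemma~\ref{lem:wt of Jhat}; applying $\Gamma_{\mu_2}$ then gives the isomorphism in one stroke. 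Both proofs ultimately rest on exactly the same input — the $\mathfrak{a}_{\Theta_2}$-weights of $\mathfrak{c}$ are nonzero nonpositive combinations of $(\Theta_1\setminus\Theta_2)\vert_{\mathfrak{a}_{\Theta_2}}$, and $\wt_{\mathfrak{a}_{\Theta_2}}(\widehat{J}_{\Theta_2,\Theta_1}(V))$ is bounded above by a finite set $\Lambda_2$ — so they are closely related. Your version is more elaborate (you must first verify the generalized weight-space decompositions of the finite quotients, then handle injectivity and surjectivity of the limit comparison separately), but it has the virtue of making the stabilization of the tower explicit, which is exactly the content one wants when thinking of $\Gamma_{\mu_2}$ of the completion. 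The paper's version is shorter because it pushes the vanishing into the completed module once and for all, avoiding the level-by-level bookkeeping. One small terminological point: Lemma~\ref{lem:HC,a_Theta decomp}(3) gives a decomposition into \emph{generalized} weight spaces, not $\mathfrak{a}_\Theta$-semisimplicity; your use of the decomposition is correct, but calling it semisimple is imprecise.
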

\begin{proof}
Put $\mathfrak{c} = \mathfrak{m}_{\Theta_1}\cap \overline{\mathfrak{n}}_{\Theta_2}$.
Take $\Lambda_2$ as in Lemma~\ref{lem:wt of Jhat}.
Let $k\in\Z_{\ge 0}$ such that for any $\alpha_1,\dots,\alpha_k\in\Sigma^+$ we have $\mu_2 \not\in (\Lambda_2 - \Z_{\ge 0}\Pi|_{\mathfrak{a}_{\Theta_2}}) - (\alpha_1 + \dotsb + \alpha_k)$.
Then we have $\wtsp_{\mu_2}(\mathfrak{c}^k \widehat{J}_{\Theta_2,\Theta_1}(V)) = 0$.
By the exact sequence $0\to \mathfrak{c}^k \widehat{J}_{\Theta_2,\Theta_1}(V)\to \widehat{J}_{\Theta_2,\Theta_1}(V)\to V/\mathfrak{c}^k V\to 0$, we have $0 = \wtsp_{\mu_2}(\mathfrak{c}^k \widehat{J}_{\Theta_2,\Theta_1}(V)) \to \wtsp_{\mu_2}(\widehat{J}_{\Theta_2,\Theta_1}(V))\to \wtsp_{\mu_2}(V/\mathfrak{c}^kV)\to 0$.
Hence we get the lemma.
\end{proof}

\begin{lem}
For $V\in\HC_\Theta$ and $\mu_2\in\mathfrak{a}_{\Theta_2}^*$, $\wtsp_{\mu_2}(J_{\Theta_2,\Theta_1}(V))$ is a Harish-Chandra module for $L_{\Theta_2,\R}$.
\end{lem}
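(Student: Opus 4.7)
The plan is to reduce the statement to Lemma~\ref{lem:n-homology preserve HC}~(4) using the stabilization and decomposition already established, and then pass from the coarser $\mathfrak{a}_{\Theta_1}$-weight decomposition to the finer $\mathfrak{a}_{\Theta_2}$-weight decomposition.

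First I would combine Lemma~\ref{lem:decomp int we sps of J} with Lemma~\ref{lem:wt sp is stable} to obtain the identifications
\[
\wtsp_{\mu_2}(J_{\Theta_2,\Theta_1}(V)) = \wtsp_{\mu_2}(\widehat{J}_{\Theta_2,\Theta_1}(V)) \simeq \wtsp_{\mu_2}\bigl(V/\mathfrak{c}^kV\bigr)
\]
for some $k\in\Z_{\ge 0}$, where $\mathfrak{c} = \mathfrak{m}_{\Theta_1}\cap\overline{\mathfrak{n}}_{\Theta_2}$. This reduces the problem to showing that the $\mathfrak{a}_{\Theta_2}$-weight space $\wtsp_{\mu_2}(V/\mathfrak{c}^kV)$ is a Harish-Chandra module for $L_{\Theta_2,\R}$.

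Next, since $\Theta_2\subset\Theta_1$ we have $\mathfrak{a}_{\Theta_1}\subset\mathfrak{a}_{\Theta_2}$, so setting $\mu_1=\mu_2|_{\mathfrak{a}_{\Theta_1}}\in\mathfrak{a}_{\Theta_1}^*$ we obtain the inclusion $\wtsp_{\mu_2}(V/\mathfrak{c}^kV)\subset\wtsp_{\mu_1}(V/\mathfrak{c}^kV)$. By Lemma~\ref{lem:n-homology preserve HC}~(4) the larger space $\wtsp_{\mu_1}(V/\mathfrak{c}^kV)$ is a Harish-Chandra module for $L_{\Theta_2,\R}$, so it is enough to verify that $\wtsp_{\mu_2}(V/\mathfrak{c}^kV)$ is an $(\mathfrak{l}_{\Theta_2},K_{\Theta_2})$-submodule of it. This is immediate: $\mathfrak{a}_{\Theta_2}$ lies in the center of $\mathfrak{l}_{\Theta_2}$, and $K_{\Theta_2}\subset M_{\Theta_2}$ commutes with $A_{\Theta_2}$ as subgroups of $L_{\Theta_2}$, so both the $\mathfrak{l}_{\Theta_2}$- and $K_{\Theta_2}$-actions preserve the generalized $\mathfrak{a}_{\Theta_2}$-weight decomposition. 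A submodule of a Harish-Chandra module is again a Harish-Chandra module, which finishes the argument.

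I do not expect a genuine obstacle here: the substantive work has already been done in the preceding three lemmas, and the only care needed is to keep straight the directions of the containments $\mathfrak{a}_{\Theta_1}\subset\mathfrak{a}_{\Theta_2}$ and $L_{\Theta_2,\R}\subset L_{\Theta_1,\R}$ forced by $\Theta_2\subset\Theta_1$.
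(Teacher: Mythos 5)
Your proposal is correct and follows essentially the same route as the paper: the paper's proof is the one-line statement that the result follows from Lemma~\ref{lem:n-homology preserve HC}, Lemma~\ref{lem:decomp int we sps of J} and Lemma~\ref{lem:wt sp is stable}, exactly the three ingredients you combine. Your extra care about passing from the $\mathfrak{a}_{\Theta_1}$-weight space of Lemma~\ref{lem:n-homology preserve HC}~(4) to the finer $\mathfrak{a}_{\Theta_2}$-weight space via the containment $\mathfrak{a}_{\Theta_1}\subset\mathfrak{a}_{\Theta_2}$ is exactly the step the paper leaves implicit, and you have it right.
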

\begin{proof}
This follows from Lemma~\ref{lem:n-homology preserve HC}, Lemma~\ref{lem:decomp int we sps of J} and Lemma~\ref{lem:wt sp is stable}.
\end{proof}

If $\Theta = \Theta_1 = \Pi$, the following proposition is \cite[(34) Lemma]{MR705884}.

\begin{prop}\label{prop:image of Jacquet functor}
If $V\in \HC_\Theta$, then $J_{\Theta_2,\Theta_1}(V)\in \HC_{\Theta_2}$.
\end{prop}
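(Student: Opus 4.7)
Write $J=J_{\Theta_2,\Theta_1}(V)$, $\widehat{J}=\widehat{J}_{\Theta_2,\Theta_1}(V)$, and $\mathfrak{c}=\mathfrak{m}_{\Theta_1}\cap\overline{\mathfrak{n}}_{\Theta_2}$. The plan is to verify in turn the three conditions defining $\HC_{\Theta_2}$ for $J$ --- a $(\mathfrak{g},K_{\Theta_2}N_{\Theta_2})$-module structure, $Z(\mathfrak{g})$-finiteness, and finite generation --- and to reduce the last of these to finite length via Lemma~\ref{lem:suff cond for fin len}.

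First I would put a $(\mathfrak{g},K_{\Theta_2}N_{\Theta_2})$-module structure on $\widehat{J}$ that restricts to $J$. For $X\in\mathfrak{g}$, iterated application of $[X,\mathfrak{c}]\subset\mathfrak{g}$ yields $X\mathfrak{c}^k V\subset\mathfrak{c}^{k-1}V$, so for $(v_k)\in\widehat{J}$ with lifts $\tilde v_k\in V$ the class $\overline{X\tilde v_{k+1}}\in V/\mathfrak{c}^kV$ depends only on $v_{k+1}$, and these classes form a compatible system, producing a $\mathfrak{g}$-action on $\widehat{J}$ that extends the one on $V$. Since $K_{\Theta_2}\subset M_{\Theta_2}\subset L_{\Theta_2}\cap M_{\Theta_1}$, the group $K_{\Theta_2}$ normalizes both $\overline{\mathfrak{n}}_{\Theta_2}$ and $\mathfrak{m}_{\Theta_1}$ and hence $\mathfrak{c}$; therefore the restriction of the $K_\Theta$-action on $V$ to $K_{\Theta_2}\subset K_\Theta$ descends to each $V/\mathfrak{c}^kV$ and, in the limit, to $\widehat{J}$. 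These structures clearly preserve $J\subset\widehat{J}$, and by the very definition of $J$ the action of $\mathfrak{n}_{\Theta_2}$ on it is locally nilpotent, hence exponentiates to an algebraic $N_{\Theta_2}$-action compatible with the $\mathfrak{g}$-action.

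Next, $\Ann_{Z(\mathfrak{g})}V$ has finite codimension in $Z(\mathfrak{g})$ and annihilates every $V/\mathfrak{c}^kV$, hence $\widehat{J}$ and $J$, proving $Z(\mathfrak{g})$-finiteness. Finally, I would apply Lemma~\ref{lem:suff cond for fin len} to $J$ with $\Theta$ replaced by $\Theta_2$: the $(\mathfrak{g},N_{\Theta_2})$-structure and $Z(\mathfrak{g})$-finiteness have just been verified, and the preceding lemma exhibits each weight space $\wtsp_{\mu_2}(J)$ as a Harish-Chandra module for $L_{\Theta_2,\R}$, hence of finite length as an $\mathfrak{m}_{\Theta_2}$-module. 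The lemma then yields finite length of $J$ as a $\mathfrak{g}$-module, in particular finite generation, so $J\in\HC_{\Theta_2}$. The one slightly delicate step is the first --- confirming that the bracket estimate really does define a well-defined $\mathfrak{g}$-action on the $\mathfrak{c}$-adic completion, compatible with the algebraic $K_{\Theta_2}$-action --- while everything else is bookkeeping assembly of the lemmas already in place.
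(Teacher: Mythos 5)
Your final step---reducing to finite length via Lemma~\ref{lem:suff cond for fin len} together with the immediately preceding (unnumbered) lemma, which identifies each $\wtsp_{\mu_2}(J_{\Theta_2,\Theta_1}(V))$ as a Harish-Chandra module for $L_{\Theta_2,\R}$---is exactly the paper's proof, which consists of that reduction alone. The paper leaves the $(\mathfrak{g},K_{\Theta_2}N_{\Theta_2})$-module structure and $Z(\mathfrak{g})$-finiteness of $J_{\Theta_2,\Theta_1}(V)$ implicit (cf.\ the appeal to \cite[(34) Lemma]{MR705884} just before the statement, and to \cite[Lemma~2.2]{MR597811} in Section~4), so your effort to supply them is reasonable.

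However, the estimate $X\mathfrak{c}^k V\subset\mathfrak{c}^{k-1}V$ for $X\in\mathfrak{g}$, which you use to extend the $\mathfrak{g}$-action to the $\mathfrak{c}$-adic completion, is false. The difficulty is that $[X,\mathfrak{c}]$ need not lie in $\mathfrak{c}\oplus\mathfrak{l}_{\Theta_2}$ but can land in $\mathfrak{n}_{\Theta_2}$, and commuting such a bracket back across the remaining $\mathfrak{c}$-factors can strip off all of them. Concretely, take $\mathfrak{g}=\mathfrak{sl}_3$, $\Theta_1=\Pi$, $\Theta_2=\emptyset$, so $\mathfrak{c}=\overline{\mathfrak{n}}$; with Chevalley generators $e_i,f_i,h_i$ and $e_{12}=[e_1,e_2]$ one computes
\[
 e_{12}\,f_1f_2\,v = f_1f_2\,e_{12}v + f_1e_1v - f_2e_2v - h_2v,
\]
and for $v$ in a maximal generalized $\mathfrak{a}$-weight space of $V$ (which exists by Lemma~\ref{lem:HC,a_Theta decomp} (3) and Lemma~\ref{lem:wt of Jhat}) with $h_2v\neq0$, the term $h_2v$ has the maximal weight and so lies outside $\overline{\mathfrak{n}}V=\mathfrak{c}V$. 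What is true, and is all that is needed, is the weaker assertion that for each $k$ there exists $k'$ with $\mathfrak{g}\,\mathfrak{c}^{k'}V\subset\mathfrak{c}^kV$; this follows because the $\mathfrak{c}$-adic filtration on $V$ is cofinal with the filtration by truncations of the $\mathfrak{a}_{\Theta_2}$-weight decomposition, which in turn uses the finite generation of $V$ over $U(\overline{\mathfrak{n}})$ and the upper-boundedness of $\wt_{\mathfrak{a}_{\Theta_2}}(V)$. This is precisely the Goodman--Wallach argument the paper invokes in the proof of part~(3) of the unnumbered lemma of Section~4. With your commutation estimate replaced by this cofinality statement, the rest of your outline goes through and agrees with the paper's.
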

\begin{proof}
It is sufficient to prove that $J_{\Theta_2,\Theta_1}(V)$ has a finite length.
This follows from Lemma~\ref{lem:suff cond for fin len} and the previous lemma.
\end{proof}

Hence $J_{\Theta_2,\Theta_1}$ defines a functor $\HC_{\Theta}\to \HC_{\Theta_2}$.

\begin{prop}\label{prop:chain rule of J}
For $\Theta_3\subset \Theta_2\subset \Theta\subset\Theta_1\subset\Pi$, we have $J_{\Theta_3,\Theta_2}\circ J_{\Theta_2,\Theta_1}\simeq J_{\Theta_3,\Theta_1}\colon \HC_{\Theta}\to \HC_{\Theta_3}$.
\end{prop}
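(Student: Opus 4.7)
The plan is to construct a natural comparison map and then verify it is an isomorphism on each generalized $\mathfrak{a}_{\Theta_3}$-weight space. Set $\mathfrak{c}_1=\mathfrak{m}_{\Theta_1}\cap\overline{\mathfrak{n}}_{\Theta_2}$, $\mathfrak{c}_2=\mathfrak{m}_{\Theta_2}\cap\overline{\mathfrak{n}}_{\Theta_3}$, $\mathfrak{c}=\mathfrak{m}_{\Theta_1}\cap\overline{\mathfrak{n}}_{\Theta_3}$, and $W=J_{\Theta_2,\Theta_1}(V)$. The main algebraic input will be a cofinality of filtrations on $V$. A root-space check yields $\mathfrak{c}=\mathfrak{c}_1\oplus\mathfrak{c}_2$ as a vector space, with $\mathfrak{c}_1$ an ideal of $\mathfrak{c}$ (a root in $\mathfrak{c}_2$ lies in $\langle\Theta_2\rangle$ while one in $\mathfrak{c}_1$ does not, so their sum, when a root, cannot lie in $\langle\Theta_2\rangle$). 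Consequently $\mathfrak{c}_1^b V$ is $\mathfrak{c}_2$-stable, which together with PBW gives $\mathfrak{c}^NV=\sum_{a+b=N}\mathfrak{c}_2^a\mathfrak{c}_1^bV$ and $\mathfrak{c}_2^a\mathfrak{c}_1^bV\subset\mathfrak{c}_1^bV\cap\mathfrak{c}_2^aV$, whence
\[
\mathfrak{c}^{N_1+N_2-1}V \;\subset\; \mathfrak{c}_1^{N_2}V+\mathfrak{c}_2^{N_1}V \;\subset\; \mathfrak{c}^{\min(N_1,N_2)}V.
\]

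I will use this cofinality, together with the identification $V/(\mathfrak{c}_1^{N_2}V+\mathfrak{c}_2^{N_1}V)=(V/\mathfrak{c}_1^{N_2}V)/\mathfrak{c}_2^{N_1}(V/\mathfrak{c}_1^{N_2}V)$, to build from the inclusion $W\hookrightarrow\widehat{J}_{\Theta_2,\Theta_1}(V)=\varprojlim_{N_2}V/\mathfrak{c}_1^{N_2}V$ natural maps $W/\mathfrak{c}_2^{N_1}W\to V/(\mathfrak{c}_1^{N_2}V+\mathfrak{c}_2^{N_1}V)$; passing to the limit and restricting to $\mathfrak{n}_{\Theta_3}$-finite parts will yield a canonical natural map
\[
\varphi\colon J_{\Theta_3,\Theta_2}(W)\longrightarrow J_{\Theta_3,\Theta_1}(V).
\]
Both sides lie in $\HC_{\Theta_3}$ by Proposition~\ref{prop:image of Jacquet functor} and so are direct sums of their generalized $\mathfrak{a}_{\Theta_3}$-weight spaces (Lemmas~\ref{lem:HC,a_Theta decomp} and~\ref{lem:decomp int we sps of J}), so it suffices to prove that $\varphi$ induces an isomorphism on $\Gamma_{\mu_3}$ for each $\mu_3\in\mathfrak{a}_{\Theta_3}^*$.

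Fix such $\mu_3$. The direct analogue of Lemma~\ref{lem:wt sp is stable} for $\mathfrak{a}_{\Theta_3}$-weights (same proof, using Lemma~\ref{lem:wt of Jhat} applied with $\Theta_3$ in place of $\Theta_2$) gives $\Gamma_{\mu_3}(J_{\Theta_3,\Theta_1}(V))=\Gamma_{\mu_3}(V/\mathfrak{c}^NV)$ for $N$ large, which the cofinality above turns into $\Gamma_{\mu_3}(V/(\mathfrak{c}_1^{N_2}V+\mathfrak{c}_2^{N_1}V))$ for suitably large $N_1,N_2$. Applying $\Gamma_{\mu_3}$ to
\[
0\to \mathfrak{c}_2^{N_1}(V/\mathfrak{c}_1^{N_2}V)\to V/\mathfrak{c}_1^{N_2}V\to V/(\mathfrak{c}_1^{N_2}V+\mathfrak{c}_2^{N_1}V)\to 0
\]
together with Lemma~\ref{lem:wt sp is stable} simultaneously at the finite collection of auxiliary weights $\{\mu_3-(\lambda_1+\cdots+\lambda_{N_1}):\lambda_i\in\wt_{\mathfrak{a}_{\Theta_3}}(\mathfrak{c}_2)\}$ identifies this with $\Gamma_{\mu_3}(W/\mathfrak{c}_2^{N_1}W)$ once $N_2$ is large enough, and a final invocation of Lemma~\ref{lem:wt sp is stable} applied to $W$ turns the latter into $\Gamma_{\mu_3}(J_{\Theta_3,\Theta_2}(W))$ for $N_1$ large; the composite of these canonical identifications realizes $\varphi$ on $\Gamma_{\mu_3}$. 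The hard part will be this last identification: intermediate quotients such as $V/\mathfrak{c}_1^{N_2}V$ are not objects of any $\HC_\bullet$, and one must verify that only a \emph{bounded} set of auxiliary $\mathfrak{a}_{\Theta_3}$-weights---determined by the finite-dimensional space $\mathfrak{c}_2$---must be stabilized simultaneously, which is exactly what makes a bounded application of Lemma~\ref{lem:wt sp is stable} sufficient.
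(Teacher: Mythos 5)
Your overall strategy coincides with the paper's: the same Lie algebras $\mathfrak{c}_1=\mathfrak{m}_{\Theta_1}\cap\overline{\mathfrak{n}}_{\Theta_2}$, $\mathfrak{c}_2=\mathfrak{m}_{\Theta_2}\cap\overline{\mathfrak{n}}_{\Theta_3}$, $\mathfrak{c}=\mathfrak{m}_{\Theta_1}\cap\overline{\mathfrak{n}}_{\Theta_3}$ appear (with the labels $1,2$ swapped), the same ideal is identified, and the proof reduces, as in the paper, to comparing generalized $\mathfrak{a}_{\Theta_3}$-weight spaces using cofinality of $\{\mathfrak{c}^NV\}$ and $\{\mathfrak{c}_1^{N_2}V+\mathfrak{c}_2^{N_1}V\}$ together with Lemma~\ref{lem:wt sp is stable}.

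However, there is a genuine error in the cofinality step. You assert $\mathfrak{c}^NV=\sum_{a+b=N}\mathfrak{c}_2^a\mathfrak{c}_1^bV$, and deduce from it the explicit bound $\mathfrak{c}^{N_1+N_2-1}V\subset\mathfrak{c}_1^{N_2}V+\mathfrak{c}_2^{N_1}V$. Neither claim is correct. When one reorders a monomial using $xy=yx+[x,y]$ with $x\in\mathfrak{c}_1$, $y\in\mathfrak{c}_2$, the commutator $[x,y]\in\mathfrak{c}_1$ has strictly smaller total degree, so the result is $\mathfrak{c}^NV\subset\sum_{a+b\le N}\mathfrak{c}_2^a\mathfrak{c}_1^bV$, and the low-degree terms $(a,b)$ with $a<N_1$ and $b<N_2$ do occur. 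A concrete counterexample arises already in $\mathfrak{sl}_4$ with $\Theta_3=\emptyset$, $\Theta_2=\{\alpha_1\}$, $\Theta_1=\{\alpha_1,\alpha_2\}$: then $\mathfrak{c}$ is the Heisenberg algebra $\langle x,y,z\rangle$ with $[x,y]=z$, $z$ central, $\mathfrak{c}_1=\langle x,z\rangle$, $\mathfrak{c}_2=\langle y\rangle$. Taking $N_1=N_2=2$, the element $xy^2\in\mathfrak{c}^3$ satisfies $xy^2=y^2x+2yz$, and one checks directly that $yz\notin\mathfrak{c}_1^2U(\mathfrak{c})+\mathfrak{c}_2^2U(\mathfrak{c})$, so $\mathfrak{c}^3\not\subset\mathfrak{c}_1^2U(\mathfrak{c})+\mathfrak{c}_2^2U(\mathfrak{c})$. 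Thus the bound $N_1+N_2-1$ fails in exactly the Lie algebras that occur here.

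What is true, and what the paper actually proves, is the qualitative version: for each $k_1,k_2$ there \emph{exists} $n$ with $\mathfrak{c}^n\subset\mathfrak{c}_1^{k_1}U(\mathfrak{c})+\mathfrak{c}_2^{k_2}U(\mathfrak{c})$. This cannot be extracted from PBW bookkeeping alone; the paper's proof considers the cyclic right $U(\mathfrak{c})$-module $U(\mathfrak{c})/(\mathfrak{c}_1^{k_1}U(\mathfrak{c})+\mathfrak{c}_2^{k_2}U(\mathfrak{c}))$, shows it is finite-dimensional using the ideal property, and then uses nilpotency of $\mathfrak{c}$ to conclude. Your proposal would go through if you replaced your explicit bound with this existential statement, since your later steps only invoke ``suitably large $N_1,N_2$''; but as written the key algebraic lemma is derived incorrectly, and the explicit bound you state is false.

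The remaining sketch (constructing a natural comparison map $\varphi$ rather than comparing weight spaces via a zig-zag of surjections as the paper does, and the observation that only a bounded set of auxiliary $\mathfrak{a}_{\Theta_3}$-weights must be stabilized) is reasonable and compatible with the paper's argument, but it is not fleshed out enough to assess independently.
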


We use the following lemma.
\begin{lem}
Let $\mathfrak{c}$ be a finite-dimensional nilpotent Lie algebra,
$\mathfrak{c}_1,\mathfrak{c}_2\subset \mathfrak{c}$ Lie subalgebras such that:
\begin{itemize}
\item $\mathfrak{c}_2$ is an ideal of $\mathfrak{c}$.
\item $\mathfrak{c} = \mathfrak{c}_1\oplus \mathfrak{c}_2$.
\end{itemize}
Then for all $k_1,k_2\in\Z_{\ge 0}$ there exists $n\in\Z_{\ge 0}$ such that $\mathfrak{c}^n\subset \mathfrak{c}_1^{k_1}U(\mathfrak{c}) + \mathfrak{c}_2^{k_2}U(\mathfrak{c})$.
\end{lem}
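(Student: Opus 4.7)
The plan is to combine a PBW-style rearrangement with the nilpotency of $\mathfrak{c}$. The key idea is that pushing $\mathfrak{c}_1$-factors to the left of $\mathfrak{c}_2$-factors in a product of $\mathfrak{c}$-elements produces commutators which descend in the lower central series, so only finitely many such descents can occur before the resulting term vanishes. Writing $\mathfrak{c}^{[1]} = \mathfrak{c}$ and $\mathfrak{c}^{[i+1]} = [\mathfrak{c}, \mathfrak{c}^{[i]}]$ for the lower central series, and $r$ for the nilpotency class (so $\mathfrak{c}^{[r+1]} = 0$), I would take $n = k_1 + r(k_2 - 1)$; the cases $k_1 = 0$ or $k_2 = 0$ are trivial.

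Given $c_1 \cdots c_n$ with $c_i \in \mathfrak{c}$, write $c_i = a_i + b_i$ according to $\mathfrak{c} = \mathfrak{c}_1 \oplus \mathfrak{c}_2$ and expand the product into a sum $\sum_S w_S$ over subsets $S \subseteq \{1, \ldots, n\}$, where $w_S$ uses $a_i$ at positions $i \in S$ and $b_i$ otherwise. For $|S| \le n - k_2$, the word $w_S$ contains at least $k_2$ factors from $\mathfrak{c}_2$; since $\mathfrak{c}_2$ is a Lie ideal of $\mathfrak{c}$, the set $\mathfrak{c}_2^{k_2} U(\mathfrak{c}) = (\mathfrak{c}_2 U(\mathfrak{c}))^{k_2}$ is a two-sided ideal, and so $w_S$ lies in it automatically.

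For $|S| > n - k_2$, I would bubble-sort $w_S$ via the relation $ba = ab - [a, b]$ for $a \in \mathfrak{c}_1$, $b \in \mathfrak{c}_2$ (noting $[a, b] \in \mathfrak{c}_2$). Each application produces an ``ordered'' term (preserving $\mathfrak{c}_1$- and $\mathfrak{c}_2$-counts) and a ``reduced'' term which loses one $\mathfrak{c}_1$-factor while replacing some $\mathfrak{c}_2$-factor $b$ of lower-central-series depth $d$ by $[a, b] \in \mathfrak{c}_2 \cap \mathfrak{c}^{[d+1]}$. A resulting sorted monomial $e_1 \cdots e_{k'} f_1 \cdots f_m$ (with $m = n - |S|$ preserved and $k' \le |S|$), produced by $r' := |S| - k'$ reductions, therefore satisfies $\sum_j d(f_j) \ge m + r'$; nilpotency forces $d(f_j) \le r$ for nonzero monomials, giving $m + r' \le mr$, i.e.\ $r' \le m(r - 1)$, so $k' \ge r|S| - (r - 1)n$. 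Plugging in $|S| \ge n - k_2 + 1$ yields $k' \ge n - r(k_2 - 1)$, and $n \ge k_1 + r(k_2 - 1)$ then gives $k' \ge k_1$, placing $w_S$ in $\mathfrak{c}_1^{k_1} U(\mathfrak{c})$.

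The main obstacle is the depth bookkeeping through the bubble sort: one must verify that the depth of any $\mathfrak{c}_2$-position strictly increases by at least one with each reduction involving it (immediate from $[\mathfrak{c}, \mathfrak{c}^{[d]}] \subset \mathfrak{c}^{[d+1]}$ together with $\mathfrak{c}_2$ being an ideal), and that these depth increments aggregate correctly across the sorting tree, even when a single $\mathfrak{c}_2$-position undergoes several successive reductions. Once this is organized, the combinatorial bound $n = k_1 + r(k_2 - 1)$ drops out of the arithmetic above.
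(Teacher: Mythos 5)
Your argument is correct, and it takes a genuinely different route from the paper's. The paper proves the lemma indirectly: it forms the right $U(\mathfrak{c})$-module $V = U(\mathfrak{c})/(\mathfrak{c}_1^{k_1}U(\mathfrak{c}) + \mathfrak{c}_2^{k_2}U(\mathfrak{c}))$ with cyclic vector $v_0=1$, observes $V = v_0 U(\mathfrak{c}_1)U(\mathfrak{c}_2)$ and uses the ideal property of $\mathfrak{c}_2$ to conclude $V$ is finite-dimensional, then invokes the structure theory of finite-dimensional representations of nilpotent Lie algebras (Lie's theorem: irreducibles are characters) to show $\mathfrak{c}$ acts nilpotently on $V$, i.e.\ $v_0\mathfrak{c}^n=0$ for some $n$. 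This is short and conceptual, but produces no bound on $n$ and needs a small auxiliary step to see that every $v\in V$ is killed by a power of $\mathfrak{c}_1$. Your proof instead works entirely inside $U(\mathfrak{c})$ by a PBW-type bubble sort: expand $(\mathfrak{c}_1+\mathfrak{c}_2)^n$, dispose of monomials with many $\mathfrak{c}_2$-factors via the two-sided ideal $\mathfrak{c}_2 U(\mathfrak{c})$, and for the remaining monomials push $\mathfrak{c}_1$-factors left, tracking lower-central-series depths of the $\mathfrak{c}_2$-factors. The bookkeeping you flag does go through: each reduction replaces a $\mathfrak{c}_2$-factor of filtration depth $d$ by one in $\mathfrak{c}_2 \cap \mathfrak{c}^{[d+1]}$ (using both that $\mathfrak{c}_2$ is an ideal and $[\mathfrak{c},\mathfrak{c}^{[d]}]\subset \mathfrak{c}^{[d+1]}$), and the depth increments simply sum along each branch of the rewriting tree, whether or not they hit the same position repeatedly, so $\sum_j d(f_j)\ge m+r'$ as claimed, and the arithmetic gives your explicit bound $n = k_1 + r(k_2-1)$. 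The gain of your approach is effectivity (an explicit $n$ in terms of the nilpotency class $r$) and elementarity (no Lie's theorem or finite-dimensional representation theory); the cost is a longer combinatorial argument. Both proofs make essential use of the same two hypotheses at the same points — $\mathfrak{c}_2$ being an ideal to absorb $\mathfrak{c}_2$-factors on either side, and nilpotency of $\mathfrak{c}$ to force termination — but package them very differently.
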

\begin{proof}
Set $V = U(\mathfrak{c})/(\mathfrak{c}_1^{k_1}U(\mathfrak{c}) + \mathfrak{c}_2^{k_2}U(\mathfrak{c}))$, $v_0 = 1\in V$.
Then $V$ is a right $U(\mathfrak{c})$-module, $V = v_0U(\mathfrak{c})$ and $v_0\mathfrak{c}_1^{k_1} = v_0\mathfrak{c}_2^{k_2} = 0$.
We have $V = v_0U(\mathfrak{c}_1)U(\mathfrak{c}_2)$.
Since $v_0\mathfrak{c}_1^{k_1} = 0$, $v_0U(\mathfrak{c}_1)$ is finite-dimensional.
By the assumption, $\mathfrak{c}_2$ is an ideal of $\mathfrak{c}$.
Therefore, $v_0U(\mathfrak{c}_1)\mathfrak{c}_2^{k_2} = v_0\mathfrak{c}_2^{k_2}U(\mathfrak{c}) = 0$.
Hence $V$ is finite-dimensional.
Since a finite-dimensional irreducible representation of $\mathfrak{c}$ is a character, $V$ is given by an extension of characters.
As $v_0\mathfrak{c}_1^{k_1} = v_0\mathfrak{c}_2^{k_2} = 0$ and $\mathfrak{c}$ is nilpotent,
for every $v\in V$ there exist integers $l_1$, $l_2$
such that $v\mathfrak{c}_1^{l_1} = v\mathfrak{c}_2^{l_2} = 0$.
This implies that each irreducible subquotient of $V$ is the trivial representation.
Hence there exists $n$ such that $v_0\mathfrak{c}^n = 0$.
This completes the proof.
\end{proof}

\begin{proof}[Proof of Proposition~\ref{prop:chain rule of J}]
We prove that for each $\mu_3\in\mathfrak{a}_{\Theta_3}^*$ the generalized $\mu_3$-weight spaces of both sides are isomorphic.
Put $\mathfrak{c}_1 = \mathfrak{m}_{\Theta_2}\cap \overline{\mathfrak{n}}_{\Theta_3}$, $\mathfrak{c}_2 = \mathfrak{m}_{\Theta_1}\cap \overline{\mathfrak{n}}_{\Theta_2}$ and $\mathfrak{c} = \mathfrak{c}_1\oplus \mathfrak{c}_2$.
Then $\mathfrak{c}_1,\mathfrak{c}_2$ satisfies the assumption of the previous lemma and $\mathfrak{c} = \mathfrak{m}_{\Theta_1}\cap\overline{\mathfrak{n}}_{\Theta_3}$.
Put $\mu_2 = \mu_3|_{\mathfrak{a}_{\Theta_2}}$.
By Lemma~\ref{lem:decomp int we sps of J} and Lemma~\ref{lem:wt sp is stable}, for sufficiently large $k_1,k_2$, we have
\begin{align*}
\Gamma_{\mu_3}(J_{\Theta_3,\Theta_2}(J_{\Theta_2,\Theta_1}(V))) & = \Gamma_{\mu_3}(J_{\Theta_2,\Theta_1}(V)/\mathfrak{c}_1^{k_1}J_{\Theta_2,\Theta_1}(V))\\
& = \Gamma_{\mu_3}\Gamma_{\mu_2}(J_{\Theta_2,\Theta_1}(V)/\mathfrak{c}_1^{k_1}J_{\Theta_2,\Theta_1}(V))\\
& = \Gamma_{\mu_3}(\Gamma_{\mu_2}(J_{\Theta_2,\Theta_1}(V))/\mathfrak{c}_1^{k_1}\Gamma_{\mu_2}(J_{\Theta_2,\Theta_1}(V)))\\
& = \Gamma_{\mu_3}(\Gamma_{\mu_2}(V/\mathfrak{c}_2^{k_2}V)/\mathfrak{c}_1^{k_1}\Gamma_{\mu_2}(V/\mathfrak{c}_2^{k_2}V))\\
& = \Gamma_{\mu_3}(\Gamma_{\mu_2}((V/\mathfrak{c}_2^{k_2}V)/\mathfrak{c}_1^{k_1}(V/\mathfrak{c}_2^{k_2}V)))\\
& = \Gamma_{\mu_3}(\Gamma_{\mu_2}(V/(\mathfrak{c}_1^{k_1}V + \mathfrak{c}_2^{k_2}V)))\\
& = \Gamma_{\mu_3}(V/(\mathfrak{c}_1^{k_1}V + \mathfrak{c}_2^{k_2}V)).
\end{align*}
We also have
\[
	\wtsp_{\mu_3}(J_{\Theta_3,\Theta_1}(V)) \simeq \wtsp_{\mu_3}(V/\mathfrak{c}^{k}V)
\]
for sufficiently large $k$.
Fix $k_1,k_2$ and take $n$ as in the previous lemma.
We may assume $k\le k_1,k_2\le n$.
Consider the following homomorphism:
\[
	\wtsp_{\mu_3}(V/\mathfrak{c}^{n}V)
	\to
	\wtsp_{\mu_3}(V/(\mathfrak{c}_1^{k_1}V + \mathfrak{c}_2^{k_2}V))
	\to
	\wtsp_{\mu_3}(V/\mathfrak{c}^kV).
\]
Since $V/\mathfrak{c}^nV$ is decomposed into the generalized $\mathfrak{a}_{\Theta_3}$-weight spaces (this follows from the fact that $V/\mathfrak{c}^nV$ is a Harish-Chandra module of $L_{\Theta_3,\R}$), the first homomorphism is surjective.
If $k_1,k_2,n,k$ is sufficiently large, the composition of this homomorphism is isomorphic by Lemma~\ref{lem:wt sp is stable}.
Hence the first homomorphism is injective.
We get the proposition.
\end{proof}

\begin{prop}
The functor $J_{\Theta_2,\Theta_1}\colon \HC_{\Theta}\to \HC_{\Theta_2}$ is independent of $\Theta_1$.
\end{prop}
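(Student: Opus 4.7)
The plan is to reduce the claim to the single identity that $J_{\Theta,\Theta_1}$ is isomorphic to the identity functor on $\HC_\Theta$ whenever $\Theta\subset \Theta_1\subset \Pi$. Granted this, applying Proposition~\ref{prop:chain rule of J} with middle index $\Theta$, that is, to the chain $\Theta_2\subset \Theta\subset \Theta\subset \Theta_1$, yields
\[
J_{\Theta_2,\Theta_1}\simeq J_{\Theta_2,\Theta}\circ J_{\Theta,\Theta_1}\simeq J_{\Theta_2,\Theta}
\]
on $\HC_\Theta$, and the right-hand side is manifestly independent of $\Theta_1$.

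So the real task is to show that for $V\in \HC_\Theta$ the natural map $V\to J_{\Theta,\Theta_1}(V)$ is an isomorphism. Set $\mathfrak{c}=\mathfrak{m}_{\Theta_1}\cap\overline{\mathfrak{n}}_\Theta$ and compare the two sides on each generalized $\mathfrak{a}_\Theta$-weight space. Lemma~\ref{lem:HC,a_Theta decomp}~(3) gives $V=\bigoplus_\mu \Gamma_\mu(V)$, while Lemmas~\ref{lem:decomp int we sps of J} and~\ref{lem:wt sp is stable} give $J_{\Theta,\Theta_1}(V)=\bigoplus_\mu \Gamma_\mu(V/\mathfrak{c}^{k_\mu}V)$ for a sufficiently large $k_\mu$ depending on $\mu$. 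It therefore suffices to prove that $\Gamma_\mu(\mathfrak{c}^kV)=0$ for $k\gg 0$: this forces $\Gamma_\mu(V)\to \Gamma_\mu(V/\mathfrak{c}^kV)$ to be an isomorphism, so the natural map $V\to J_{\Theta,\Theta_1}(V)$ is bijective weight space by weight space and hence an isomorphism.

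The weight vanishing is the heart of the argument, and I expect it to be the main obstacle; my plan is to handle it through root data. First, rerunning the proof of Lemma~\ref{lem:wt of Jhat} with $\Theta$ in place of $\Theta_1$ and applied to $V$ itself yields a finite $\Lambda\subset \mathfrak{a}_\Theta^*$ with $\wt_{\mathfrak{a}_\Theta}(V)\subset \Lambda -\Z_{\ge 0}(\Pi\setminus \Theta)|_{\mathfrak{a}_\Theta}$. On the other hand, $\mathfrak{c}$ is a sum of root spaces $\mathfrak{g}_{-\alpha}$ for $\alpha\in (\Sigma^+\cap \Z\Theta_1)\setminus \Z\Theta$, and each such $-\alpha|_{\mathfrak{a}_\Theta}$ is a nonzero non-positive integer combination of $(\Theta_1\setminus \Theta)|_{\mathfrak{a}_\Theta}$. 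Combining these, any $\mathfrak{a}_\Theta$-weight of $\mathfrak{c}^kV$ is expressible as $\nu-\sum_{\beta\in \Pi\setminus \Theta}p_\beta \beta|_{\mathfrak{a}_\Theta}$ with $\nu\in \Lambda$, $p_\beta\in \Z_{\ge 0}$, and $\sum_{\beta\in \Theta_1\setminus \Theta} p_\beta\ge k$. Since $(\Pi\setminus \Theta)|_{\mathfrak{a}_\Theta}$ is a basis of $\mathfrak{a}_\Theta^*$, the $p_\beta$ are uniquely determined by $\mu$ and $\nu$; as $\Lambda$ is finite, the quantity $\max_{\nu\in\Lambda}\sum_{\beta\in \Theta_1\setminus \Theta} p_\beta(\mu,\nu)$ is a finite bound, and once $k$ exceeds it we get $\Gamma_\mu(\mathfrak{c}^kV)=0$ as required.
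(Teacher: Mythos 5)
Your proposal is correct and follows the same route as the paper: reduce via the chain rule to showing $J_{\Theta,\Theta_1}\simeq\mathrm{id}$ on $\HC_\Theta$, compare generalized $\mathfrak{a}_\Theta$-weight spaces using Lemmas~\ref{lem:decomp int we sps of J} and~\ref{lem:wt sp is stable}, and kill $\Gamma_\mu(\mathfrak{c}^kV)$ for $k\gg0$ by a weight-lowering estimate coming from a finite set $\Lambda$ bounding $\wt_{\mathfrak{a}_\Theta}(V)$ from above. You merely spell out the root-theoretic bookkeeping behind the paper's one-line weight estimate more explicitly.
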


\begin{proof}
By the previous proposition, $J_{\Theta_2,\Theta_1} = J_{\Theta_2,\Theta}\circ J_{\Theta,\Theta_1}$.
Hence it is sufficient to prove that $J_{\Theta,\Theta_1}(V) \simeq V$ for $V\in\HC_{\Theta}$.
We compare the $\mu$-weight spaces for each $\mu\in\mathfrak{a}_\Theta^*$.

Put $\mathfrak{c} = \mathfrak{m}_{\Theta_1}\cap\overline{\mathfrak{n}}_{\Theta}$.
Take a finite subset $\Lambda\subset\mathfrak{a}_{\Theta}^*$ such that $\wt_{\mathfrak{a}_{\Theta}}(V)\subset \Lambda - \Z_{\ge 0}\Pi|_{\mathfrak{a}_{\Theta}}$.
Then for a sufficiently large $k$, for any $\alpha_1,\dots,\alpha_k\in\Pi$ we have $\mu \not\in\Lambda - \Z_{\ge 0}\Pi|_{\mathfrak{a}_{\Theta}} - (\alpha_1 + \dotsm + \alpha_k)$.
Hence we have $\mu\not\in\wt_{\mathfrak{a}_\Theta}(\mathfrak{c}^kV)$.
This implies $\wtsp_\mu(V)\simeq \wtsp_\mu(V/\mathfrak{c}^kV)$.
On the other hand, the right hand side is isomorphic to $\wtsp_\mu(J_{\Theta,\Theta_1}(V))$ for a sufficiently large $k$ by Lemma~\ref{lem:wt sp is stable}.
We get the proposition.
\end{proof}

\begin{lem}
Each $V\in\HC_\Theta$ is finitely generated as a $U(\overline{\mathfrak{n}})$-module.
\end{lem}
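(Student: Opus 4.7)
The plan is to reduce the statement to the classical Casselman finite-generation theorem, applied to the reductive Levi $M_{\Theta,\R}$. The reduction combines the $\mathfrak{a}_\Theta$-weight decomposition established in Lemma~\ref{lem:HC,a_Theta decomp}(3) with the parabolic PBW factorizations of $U(\mathfrak{g})$ and $U(\overline{\mathfrak{n}})$.

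First I would pick a finite-dimensional $U(\mathfrak{g})$-generating subspace $W$ of $V$ and enlarge it so that it is simultaneously $K_\Theta$-stable, $\mathfrak{n}_\Theta$-stable (possible because the algebraic action of $N_\Theta$ makes $\mathfrak{n}_\Theta$ act locally finitely), and a direct sum of finitely many $\mathfrak{a}_\Theta$-generalized weight spaces of $V$. Using the parabolic PBW factorization $U(\mathfrak{g}) = U(\overline{\mathfrak{n}}_\Theta)\,U(\mathfrak{l}_\Theta)\,U(\mathfrak{n}_\Theta)$ together with $\mathfrak{n}_\Theta W\subset W$, one obtains $V = U(\overline{\mathfrak{n}}_\Theta)\,U(\mathfrak{l}_\Theta)\,W$. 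Since $\mathfrak{l}_\Theta$ preserves every $\wtsp_\mu(V)$ and $W$ has finite weight support $\wt_{\mathfrak{a}_\Theta}(W)$, the subspace $U(\mathfrak{l}_\Theta)W$ is contained in the finite direct sum $\bigoplus_{\mu\in \wt_{\mathfrak{a}_\Theta}(W)}\wtsp_\mu(V)$.

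By Lemma~\ref{lem:n-homology preserve HC}(1), each $\wtsp_\mu(V)$ is a Harish-Chandra module of $L_{\Theta,\R}$, and after restricting the action one regards it as a Harish-Chandra module of $M_{\Theta,\R}$ (on which $\mathfrak{a}_\Theta$ acts through the generalized character $\mu$). The classical Casselman finite-generation theorem applied to $M_{\Theta,\R}$ then yields that $\wtsp_\mu(V)$ is finitely generated over $U(\overline{\mathfrak{n}}\cap \mathfrak{m}_\Theta)$, the enveloping algebra of the opposite unipotent radical of a minimal parabolic of $M_\Theta$. Since $U(\overline{\mathfrak{n}}\cap \mathfrak{m}_\Theta)$ is left Noetherian, its submodule $U(\mathfrak{l}_\Theta)W\cap \wtsp_\mu(V)$ is also finitely generated, and summing over the finite set $\wt_{\mathfrak{a}_\Theta}(W)$ shows that $U(\mathfrak{l}_\Theta)W$ itself is finitely generated over $U(\overline{\mathfrak{n}}\cap \mathfrak{m}_\Theta)$.

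Finally, I would combine $V = U(\overline{\mathfrak{n}}_\Theta)\cdot(U(\mathfrak{l}_\Theta)W)$ with the PBW identity $U(\overline{\mathfrak{n}}) = U(\overline{\mathfrak{n}}_\Theta)\,U(\overline{\mathfrak{n}}\cap \mathfrak{m}_\Theta)$, coming from the vector-space decomposition $\overline{\mathfrak{n}} = \overline{\mathfrak{n}}_\Theta\oplus (\overline{\mathfrak{n}}\cap \mathfrak{m}_\Theta)$, to conclude that $V$ is finitely generated over $U(\overline{\mathfrak{n}})$. The essential external input — and the only non-bookkeeping step — is Casselman's theorem for $M_{\Theta,\R}$; the remainder is a routine matching of the parabolic decomposition with the $\mathfrak{a}_\Theta$-weight decomposition. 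If one instead wished to proceed without quoting Casselman, the natural alternative would be an induction on $\dim G_\R$, with the base case $\Theta = \Pi$ (ordinary Harish-Chandra modules) needing a direct proof.
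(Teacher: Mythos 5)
Your proposal is correct and follows essentially the same route as the paper: pick a finite-dimensional $\mathfrak{n}_\Theta\oplus\mathfrak{a}_\Theta$-stable generating subspace $W$, note that $U(\mathfrak{l}_\Theta)W$ lies in a finite sum of $\mathfrak{a}_\Theta$-weight spaces and is thus a Harish-Chandra module of $L_{\Theta,\R}$, apply the Casselman--Osborne finite-generation theorem to get finite generation over $U(\overline{\mathfrak{n}}\cap\mathfrak{m}_\Theta)$, and combine with the PBW factorization $U(\overline{\mathfrak{n}})=U(\overline{\mathfrak{n}}_\Theta)\,U(\overline{\mathfrak{n}}\cap\mathfrak{m}_\Theta)$. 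The only cosmetic difference is that you route through Noetherianity of $U(\overline{\mathfrak{n}}\cap\mathfrak{m}_\Theta)$ to pass to the submodule $U(\mathfrak{l}_\Theta)W$, whereas the paper applies Casselman--Osborne to $U(\mathfrak{m}_\Theta)W$ directly.
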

\begin{proof}
Take a finite-dimensional $\mathfrak{n}_\Theta\oplus \mathfrak{a}_\Theta$-stable subspace $W$ of $V$ which generates $V$ as a $\mathfrak{g}$-module.
Then $U(\mathfrak{m}_\Theta)W\subset \bigoplus_{\mu\in\Lambda}\wtsp_\mu(V)$ for some finite subset $\Lambda\subset\mathfrak{a}^*_\Theta$.
Hence $U(\mathfrak{m}_\Theta)W$ is a Harish-Chandra module of $L_{\Theta,\R}$.
Therefore, by a theorem of Casselman-Osborne~\cite[2.3 Theorem]{MR0480884}, $U(\mathfrak{m}_\Theta)W$ is finitely generated as a $(\mathfrak{m}_\Theta\cap \overline{\mathfrak{n}})$-module.
Since $V = U(\overline{\mathfrak{n}}_\Theta)U(\mathfrak{m}_\Theta)W$, $V$ is a finitely generated $U(\overline{\mathfrak{n}}_\Theta)U(\mathfrak{m}_\Theta\cap \overline{\mathfrak{n}}) = U(\overline{\mathfrak{n}})$-module.
\end{proof}

\begin{prop}
The functor $J_{\Theta_2,\Theta_1}\colon \HC_{\Theta_1}\to\HC_{\Theta_2}$ is exact.
\end{prop}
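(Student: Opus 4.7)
The plan is to reduce the exactness of $J_{\Theta_2,\Theta_1}$ to an Artin--Rees statement, using the $\mathfrak{a}_{\Theta_2}$-weight decomposition. By Lemmas~\ref{lem:HC,a_Theta decomp}(3) and~\ref{lem:decomp int we sps of J}, $J_{\Theta_2,\Theta_1}(W) = \bigoplus_{\mu_2 \in \mathfrak{a}_{\Theta_2}^*}\wtsp_{\mu_2}(J_{\Theta_2,\Theta_1}(W))$, so it suffices to show exactness of $W\mapsto \wtsp_{\mu_2}(J_{\Theta_2,\Theta_1}(W))$ for each $\mu_2$. Given a short exact sequence $0\to V'\to V\to V''\to 0$ in $\HC_{\Theta_1}$, Lemma~\ref{lem:wt sp is stable} lets us pick $k$ large enough that $\wtsp_{\mu_2}(J_{\Theta_2,\Theta_1}(W))\simeq \wtsp_{\mu_2}(W/\mathfrak{c}^k W)$ simultaneously for $W\in\{V',V,V''\}$, where $\mathfrak{c} = \mathfrak{m}_{\Theta_1}\cap\overline{\mathfrak{n}}_{\Theta_2}$. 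The right and middle exactness of the resulting sequence
\[
0\to \wtsp_{\mu_2}(V'/\mathfrak{c}^k V')\to \wtsp_{\mu_2}(V/\mathfrak{c}^k V)\to \wtsp_{\mu_2}(V''/\mathfrak{c}^k V'')\to 0
\]
follow from the right exactness of $W\mapsto W/\mathfrak{c}^k W$ together with the exactness of $\wtsp_{\mu_2}$. The only remaining point is injectivity of the leftmost map, that is, $\wtsp_{\mu_2}(\mathfrak{c}^k V\cap V') = \wtsp_{\mu_2}(\mathfrak{c}^k V')$ for $k$ large.

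For this I would invoke noncommutative Artin--Rees. Although $\mathfrak{c}$ is not a Lie ideal of $\overline{\mathfrak{n}}$, the subspace $\overline{\mathfrak{n}}_{\Theta_2}\supset\mathfrak{c}$ \emph{is}, by a direct root-data check using $\Theta_2\subset\Theta_1\subset\Pi$. Hence $U(\overline{\mathfrak{n}})\overline{\mathfrak{n}}_{\Theta_2}$ is a two-sided ideal of the Noetherian algebra $U(\overline{\mathfrak{n}})$, and by the lemma just preceding this proposition $V$, hence its submodule $V'$, is finitely generated over $U(\overline{\mathfrak{n}})$. The standard noncommutative Artin--Rees lemma then supplies a constant $N$ with $\overline{\mathfrak{n}}_{\Theta_2}^k V\cap V'\subset \overline{\mathfrak{n}}_{\Theta_2}^{k - N}V'$ for all $k\ge N$.

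The last step is to compare the $\mathfrak{c}$- and $\overline{\mathfrak{n}}_{\Theta_2}$-adic filtrations at the fixed weight $\mu_2$. Using the decomposition $\overline{\mathfrak{n}}_{\Theta_2}=\mathfrak{c}\oplus \overline{\mathfrak{n}}_{\Theta_1}$, PBW with $\mathfrak{c}$ ordered first, and the fact that $\wt_{\mathfrak{a}_{\Theta_1}}(V)$ is bounded above (as established in the proof of Lemma~\ref{lem:wt of Jhat}) while every nonzero element of $\overline{\mathfrak{n}}_{\Theta_1}$ carries nonzero $\mathfrak{a}_{\Theta_1}$-weight, one shows that each PBW-normalized summand contributing to $\wtsp_{\mu_2}(\overline{\mathfrak{n}}_{\Theta_2}^k V)$ contains at most $N_0 = N_0(\mu_2, V)$ factors from $\overline{\mathfrak{n}}_{\Theta_1}$; a commutator count then gives $\wtsp_{\mu_2}(\overline{\mathfrak{n}}_{\Theta_2}^k V)\subset \wtsp_{\mu_2}(\mathfrak{c}^{k-2N_0}V)$ for $k\ge 2N_0$. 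Combining this with the Artin--Rees bound above and with the stabilization of $\wtsp_{\mu_2}(\mathfrak{c}^k V')$ implicit in the proof of Lemma~\ref{lem:wt sp is stable} yields the desired injectivity. The main delicate step is this last bookkeeping, which must simultaneously track PBW degree and the $\mathfrak{a}_{\Theta_1}$-weight constraint.
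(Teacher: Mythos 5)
Your reduction to exactness of $W\mapsto\wtsp_{\mu_2}(W/\mathfrak{c}^kW)$ via Lemmas~\ref{lem:decomp int we sps of J} and~\ref{lem:wt sp is stable} is fine, and you are right that $\mathfrak{c}$ is not a Lie ideal of $\overline{\mathfrak{n}}$ while $\overline{\mathfrak{n}}_{\Theta_2}$ is, that the preceding lemma on finite generation over $U(\overline{\mathfrak{n}})$ is the second ingredient, and that noncommutative Artin--Rees for $\overline{\mathfrak{n}}_{\Theta_2}U(\overline{\mathfrak{n}})$ is available. These are exactly the two ingredients the paper names. What you are missing is the shortcut: the proposition immediately before the finite-generation lemma already shows that $J_{\Theta_2,\Theta_1}$ is independent of $\Theta_1$, so on $\HC_{\Theta_1}$ one has $J_{\Theta_2,\Theta_1}\simeq J_{\Theta_2,\Pi}$, where $\mathfrak{m}_\Pi\cap\overline{\mathfrak{n}}_{\Theta_2}=\overline{\mathfrak{n}}_{\Theta_2}$. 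With that substitution the filtration in sight is directly the $\overline{\mathfrak{n}}_{\Theta_2}$-adic one, $\overline{\mathfrak{n}}_{\Theta_2}U(\overline{\mathfrak{n}})$ is a two-sided ideal, and Wallach's proof transfers verbatim --- no comparison of filtrations is needed. That is the intended proof.

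Your workaround, comparing the $\mathfrak{c}$-adic and $\overline{\mathfrak{n}}_{\Theta_2}$-adic filtrations at fixed $\mathfrak{a}_{\Theta_2}$-weight, can be made to work, but the ``commutator count'' is the soft spot. When you move $\mathfrak{c}$-factors to the left past $\overline{\mathfrak{n}}_{\Theta_1}$-factors, each commutation $nc\mapsto cn-[c,n]$ with $[c,n]\in\overline{\mathfrak{n}}_{\Theta_1}$ \emph{consumes} a $\mathfrak{c}$-factor rather than merely reordering; bounding the number of $\overline{\mathfrak{n}}_{\Theta_1}$-factors by $N_0$ does not by itself bound the number of consumed $\mathfrak{c}$-factors, so $a'\ge k-2N_0$ does not follow from that alone. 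What rescues the argument is a second observation you do not state: each commutation strictly lowers the $\mathfrak{a}_{\Theta_2}$-weight of the $\overline{\mathfrak{n}}_{\Theta_1}$-factor involved, and these weights range over a finite set, so each of the at most $N_0$ factors can absorb only boundedly many commutations. This gives $a'\ge k-c(\mu_2,V)$ for some constant $c$, but $c$ generically depends on the nilpotency depth of $\overline{\mathfrak{n}}_{\Theta_1}$ under $\mathrm{ad}\,\mathfrak{c}$, not just on $N_0$, so the specific bound $2N_0$ is not correct in general. In short: the approach is salvageable and in the same spirit as the paper's, but it replaces a one-line reduction (independence of $\Theta_1$) with a bookkeeping lemma whose precise form you have not pinned down.
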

\begin{proof}
If $\Theta_2 = \emptyset$ and $\Theta_1 = \Pi$, this proposition is well-known~\cite[4.1.5. Theorem]{MR929683}.
The key point of the proof is the Artin-Rees property and that $V\in\HC_\Pi$ is finitely generated as a $U(\overline{\mathfrak{n}})$-module.
Hence the usual proof is applicable for our situation using the above lemma.
\end{proof}

\section{The geometric Jacquet functor}\label{sec:The geometric Jacquet functor}
In this section, we recall an argument of Emerton-Nadler-Vilonen~\cite{MR2096674}.
For $\Theta\subset \Pi$, let $\HC_{\Theta,\rho}$ be the category of $V\in\HC_\Theta$ whose infinitesimal character is the same as that of the trivial representation.
Fix a Borel subgroup $B$ of $G$.
Then by the Beilinson-Bernstein correspondence and the Riemann-Hilbert correspondence, we have an equivalence of categories $\Delta\colon\HC_{\Theta,\rho}\simeq \Perv_{K_\Theta N_\Theta}(G/B)$ where $\Perv_{K_\Theta N_\Theta}(G/B)$ is the category of $K_\Theta N_\Theta$-equivariant perverse sheaves on $G/B$.

Fix a cocharacter $\nu\colon \Cbatu\to A$ such that $\langle \nu,\alpha\rangle \ge 0$ for all $\alpha\in\Pi$.
Define $a_\nu\colon G/B\times \Cbatu \to G/B$ by $(gB,t)\mapsto \nu(t)gB$.
Let $R\psi$ be the nearby cycle functor with respect to $G/B\times \Caff^1\to \Caff^1$.
For $\mathscr{F}\in\Perv(G/B)$, put $\Psi_\nu(\mathscr{F}) = R\psi a_\nu^*\mathscr{F}$.
Then the main theorem of \cite{MR2096674} is the following.
\begin{thm}[Emerton-Nadler-Vilonen~{\cite[Theorem~1.1]{MR2096674}}]
Assume that $\nu$ is regular.
We have $\Delta\circ J_{\emptyset,\Pi} \simeq \Psi_\nu\circ \Delta\colon \HC_{\Pi,\rho}\to \Perv(G/B)$.
\end{thm}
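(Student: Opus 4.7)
The plan is to establish $\Delta \circ J_{\emptyset,\Pi} \simeq \Psi_\nu \circ \Delta$ by constructing a natural transformation between the two functors and then verifying it is an isomorphism on a generating class. Both functors are exact: $J_{\emptyset,\Pi}$ by the last proposition of Section~\ref{sec:The Jacquet functors}, and $\Psi_\nu = R\psi \circ a_\nu^*$ by the standard theory of nearby cycles (note that $a_\nu$ is smooth, so $a_\nu^*$ is $t$-exact up to shift). Since $\HC_{\Pi,\rho}$ consists of objects of finite length and is generated as a triangulated category by standard modules (cohomologically induced characters attached to closed $K$-orbits in $G/B$), it suffices to verify the comparison on such standards, whose image under $\Delta$ are the standard extensions of $K$-equivariant local systems from $K$-orbits.

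First I would set up the comparison map by reinterpreting both sides via $\mathcal{D}$-modules on $G/B$ and the translation family $a_\nu$. Under Beilinson--Bernstein, $V\in\HC_{\Pi,\rho}$ corresponds to a regular holonomic $\mathcal{D}_{G/B}$-module $\mathcal{M}$ with $V=\Gamma(G/B,\mathcal{M})$. Since $\nu$ satisfies $\langle \nu,\alpha\rangle\ge 0$ for all $\alpha\in\Pi$ and is regular, the one-parameter subgroup $t\mapsto \nu(t)$ contracts $G/B$ as $t\to 0$ onto its $A$-fixed points, with attracting cells equal to the Bruhat cells $BwB/B$. The nearby cycle $R\psi(a_\nu^*\mathcal{F})$ thus encodes the ``asymptotic behaviour'' of $\mathcal{F}$ under this contraction, which on the algebraic side should correspond to the $\overline{\mathfrak{n}}$-adic completion of $V$ followed by passing to $\mathfrak{n}$-finite vectors — precisely the construction of $J_{\emptyset,\Pi}(V)$.

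To make this precise, I would invoke Kashiwara's $V$-filtration formalism: the nearby cycle $R\psi$ on $G/B\times\Cbatu$ is computed by the $V$-filtration of $\mathcal{D}_{G/B\times\Caff^1}$ along $G/B\times\{0\}$. The key calculation is to identify the $V$-filtration of $a_\nu^*\mathcal{M}$ with the $\overline{\mathfrak{n}}$-adic filtration of its global sections, using that the infinitesimal generator of the $\Cbatu$-action is, at the level of global sections, a lift of the element $d\nu\in\mathfrak{a}$, whose eigenvalues on $V$ coincide with the $\mathfrak{a}$-weights (because $\nu$ is regular, all nonzero restricted root spaces act nontrivially). This identification gives, for each $V\in\HC_{\Pi,\rho}$, a canonical comparison morphism $\Psi_\nu(\Delta(V))\to \Delta(J_{\emptyset,\Pi}(V))$.

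Finally, one verifies the morphism is an isomorphism on principal series (standard modules), where both sides admit an explicit description: on the algebraic side, the Jacquet functor applied to a principal series returns its restriction to the minimal parabolic up to a twist, and on the geometric side, the nearby cycle of a standard extension from the open Bruhat cell is computed by the Bialynicki-Birula decomposition together with the elementary calculation of nearby cycles for a monomial function. The main obstacle is the bridge between the two worlds — namely, identifying the $V$-filtration on the family $a_\nu^*\mathcal{M}$ with the algebraic $\overline{\mathfrak{n}}$-adic filtration on global sections — since Beilinson--Bernstein is a statement about global sections while nearby cycles are intrinsically local on $G/B$; controlling the interaction will require that regularity of $\nu$ ensures the relevant limit is well-defined along every Bruhat cell simultaneously.
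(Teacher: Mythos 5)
You correctly identify the engine of the proof — Kashiwara's description of nearby cycles via the $V$-filtration — and the paper (following Emerton--Nadler--Vilonen) does prove the result by this route. But your proposal treats the heart of the argument as a black box, and the reduction you propose to wrap around it is both unnecessary and logically off. Let me spell out the concerns.

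The paper's argument (reviewed in Section~\ref{sec:The geometric Jacquet functor} for the more general, possibly non-regular, $\nu$) is direct: one constructs an explicit candidate filtration $V^a(\widetilde{V})=\bigoplus_k t^k F_{-a+k}(V)$, where $F_aV=V\cap\prod_{a'\le_\Z a}\wtsp_{H,a'}(\widehat{J}_{\Theta,\Pi}(V))$ is defined from generalized $H$-eigenvalues inside the $\overline{\mathfrak{n}}$-adic completion (not from the $\overline{\mathfrak{n}}$-adic filtration itself, which is the wrong filtration — your proposal conflates the two). Then it is \emph{proved} that $V^a(\widetilde{V})$ satisfies the defining axioms of the $V$-filtration: stability, finite generation of $\mathrm{gr}$, and generation by $F_{-l}(U(\overline{\mathfrak{n}}_\Theta))$. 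These are the content of the final lemma of the section and draw on the structural results of Section~\ref{sec:The Jacquet functors} (Lemmas~\ref{lem:n-homology preserve HC}, \ref{lem:wt of Jhat}, \ref{lem:wt sp is stable}). Once the filtration is identified, Kashiwara's theorem immediately yields $\Gamma(G/B, R\psi\,\widetilde{\mathscr{V}})=J_{\Theta,\Pi}(V)$ for \emph{every} $V$, with no further verification. Your proposal says ``the key calculation is to identify the $V$-filtration \ldots'' and then postpones it, calling it ``the main obstacle.'' That identification \emph{is} the theorem; leaving it unproved leaves a gap that is essentially the whole proof.

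The framing around that gap is also problematic. First, $\HC_{\Pi,\rho}$ is abelian, not triangulated, and for exact functors $F,G$ between finite-length abelian categories, a natural transformation $\eta\colon F\to G$ is an isomorphism iff $\eta_S$ is one on every \emph{simple} $S$ — not on ``standards.'' Standard modules are not simple, and ``generated as a triangulated category by standards'' does not give the reduction you want at the abelian level without additional argument. Second, and more importantly, the reduction is redundant: once the $V$-filtration is correctly identified on $a_\nu^*\mathcal{M}$ for arbitrary $V$, there is no comparison morphism left to check on generators — the two sides are literally computed to be the same. The ENV/paper strategy short-circuits your detour entirely. The honest summary is that your plan has the right keyword (``$V$-filtration'') but replaces the actual work — constructing the filtration from the $H$-eigenspace decomposition of $\widehat{J}$, proving the stability and finite-generation lemmas, and invoking Kashiwara — with a heuristic (``contraction onto Bruhat cells'') and a wrapper (exact functors plus check on standards) that neither closes the gap nor is needed once the gap is closed.
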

Their argument can be applicable for a general $\nu$.
Namely, we can prove the following theorem.
\begin{thm}\label{thm:Emerton-Nadler-Vilonen2}
Set $\Theta = \{\alpha\in\Pi\mid \langle \alpha,\nu\rangle = 0\}$.
Then we have $\Delta\circ J_{\Theta,\Theta'}\simeq \Psi_\nu\circ \Delta\colon\HC_{\Theta',\rho}\to\Perv(G/B)$ for all $\Theta'\subset\Pi$ such that $\Theta\subset\Theta'$.
\end{thm}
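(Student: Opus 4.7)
The plan is to adapt the argument of Emerton-Nadler-Vilonen, which proves the case $\Theta=\emptyset$, $\Theta'=\Pi$, $\nu$ regular. Their essential content is an algebraic identification, via Beilinson-Bernstein, of $\Psi_\nu(\Delta V)$ in terms of the $\nu$-weight filtration on $U(\mathfrak{g})$ and on $V$. In our setting the $\nu$-weight decomposition of $\mathfrak{g}$ reads $\overline{\mathfrak{n}}_\Theta$ (strictly negative) $\oplus\ \mathfrak{l}_\Theta$ (weight zero) $\oplus\ \mathfrak{n}_\Theta$ (strictly positive), where $\Theta$ is the vanishing locus of $\nu$ on $\Pi$. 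So the analogous algebraic construction applied to $V$ should yield the $\mathfrak{n}_\Theta$-locally finite vectors in the $\overline{\mathfrak{n}}_\Theta$-adic completion of $V$, which is precisely $J_{\Theta,\Pi}(V)$. By the independence of $J_{\Theta_2,\Theta_1}$ from $\Theta_1$ proven in Section~\ref{sec:The Jacquet functors}, this coincides with $J_{\Theta,\Theta'}(V)$ whenever $V\in\HC_{\Theta',\rho}$.

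Concretely, the steps I envision are: (i) extend the Emerton-Nadler-Vilonen identification of $\Psi_\nu(\Delta V)$ with an algebraic Rees-type construction from their setting (regular $\nu$ and $V\in\HC_{\Pi,\rho}$) to the setting of general dominant $\nu$ and $V\in\HC_{\Theta',\rho}$; (ii) read off the result in terms of the $\nu$-filtration, producing $J_{\Theta,\Pi}(V)$; (iii) apply the independence result to identify this with $J_{\Theta,\Theta'}(V)$; (iv) check that the output naturally carries a $K_\Theta N_\Theta$-equivariant structure, so that it defines an object of $\HC_{\Theta,\rho}\simeq \Perv_{K_\Theta N_\Theta}(G/B)$.

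The main obstacle will be step (i). The Emerton-Nadler-Vilonen argument rests on analyzing how the $\nu(t)$-action contracts $G/B$ as $t\to 0$: for regular $\nu$ this sweeps each $B$-orbit into a Schubert cell, whereas in our non-regular setting one only contracts down to the coarser $P_\Theta$-orbit structure on $G/B$, and the equivariance group $K_{\Theta'}N_{\Theta'}$ is larger than $K$. These two features change the local geometry that their argument examines, so the argument must be revisited step by step. I expect the formal structure of the Rees module computation to be robust enough to handle both generalizations, since it uses only the abstract $\nu$-weight filtration on $\mathfrak{g}$ and on $V$ rather than the specific $K$-orbit structure; the practical verification is nevertheless the crux of the proof.
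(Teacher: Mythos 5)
Your high-level route matches the paper's: adapt Emerton--Nadler--Vilonen, read off the $\nu$-weight decomposition $\mathfrak{g}=\overline{\mathfrak{n}}_\Theta\oplus\mathfrak{l}_\Theta\oplus\mathfrak{n}_\Theta$, conclude that the nearby cycle produces $J_{\Theta,\Pi}(V)$, and invoke the independence of $J_{\Theta_2,\Theta_1}$ from $\Theta_1$. Your closing intuition is also correct: the ENV computation is carried out on the $U(\mathfrak{g})$-module side via a $V$-filtration and Kashiwara's $\mathscr{D}$-module description of nearby cycles, so the coarser $P_\Theta$-orbit stratification of $G/B$ for non-regular $\nu$ is a red herring; the orbit geometry never enters.

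What is missing is the execution of what you yourself flag as ``the crux.'' The paper fills exactly that gap with three concrete verifications, none of which your proposal supplies: (a) the map $V\to\widehat{J}_{\Theta,\Pi}(V)$ is injective --- proved by reducing to $V/\overline{\mathfrak{n}}V\ne 0$ via Casselman's subrepresentation theorem --- which is needed so that one can \emph{define} the filtration $F_aV=V\cap\prod_{a'\le_\Z a}\wtsp_{H,a'}(\widehat{J}_{\Theta,\Pi}(V))$, with $H=d\nu(1)$, by intersection inside the completion; (b) the computation that $F_{-a}V/F_{-a-1}V\xrightarrow{\sim}\bigoplus_{\mu(H)=a}\wtsp_\mu(J_{\Theta,\Pi}(V))$, which rests on the stabilization lemma for weight spaces of $\widehat{J}_{\Theta,\Pi}(V)$; and (c) the three conditions making $V^a(\widetilde{V})=\bigoplus_k t^kF_{-a+k}V$ a genuine $V$-filtration --- stabilization of $F_{-a+k}V$ for large $k$, finite generation of $F_{-a-k}(V)/F_{-k}(U(\overline{\mathfrak{n}}_\Theta))F_{-a}(V)$ over $U(\mathfrak{m}_\Theta)$, and $F_{-a-k-l}(V)=F_{-l}(U(\overline{\mathfrak{n}}_\Theta))F_{-a-k}(V)$ for large $k$. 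Asserting that the formal Rees structure ``should be robust enough'' is a reasonable heuristic but not a proof: until (a)--(c) are checked, the argument is a plan, not a demonstration.
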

We review the proof.
Let $V\in\HC_{\Theta',\rho}$.
First we construct a filtration on $V$.
To construct it, we prove the following lemma.
\begin{lem}
We have the following.
\begin{enumerate}
\item We have $J_{\Theta,\Pi}(V)/\overline{\mathfrak{n}}_{\Theta}^kJ_{\Theta,\Pi}(V)\simeq \widehat{J}_{\Theta,\Pi}(V)/\overline{\mathfrak{n}}_{\Theta}^k\widehat{J}_{\Theta,\Pi}(V)$.
\item We have $\widehat{J}_{\Theta,\Pi}(V)\simeq \widehat{J}_{\Theta,\Pi}(J_{\Theta,\Pi}(V))$.
\item We have $\widehat{J}_{\Theta,\Pi}(V)\simeq\prod_{\mu\in\mathfrak{a}_{\Theta}^*}\wtsp_{\mu}(J_{\Theta,\Pi}(V))$.
\item The homomorphism $V\to \widehat{J}_{\Theta,\Pi}(V)$ is injective.
\end{enumerate}
\end{lem}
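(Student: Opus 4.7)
The plan is to prove the four parts in the order (3), (1), (2), (4): part (3) supplies the product description of $\widehat{J}_{\Theta,\Pi}(V)$ used for the weight-by-weight argument of (1), (2) follows from (1) by passing to the limit in $k$, and (4) rests on the independent input that $V$ is finitely generated over $U(\overline{\mathfrak{n}})$.

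For (3), Lemma~\ref{lem:n-homology preserve HC}(4) applied with $\Theta_1=\Pi$ and $\Theta_2=\Theta$ shows that each $V/\overline{\mathfrak{n}}_\Theta^k V$ is a Harish-Chandra module of $L_{\Theta,\R}$, hence $U(\mathfrak{a}_\Theta)$-finite with a finite generalized weight support $F_k\subset\mathfrak{a}_\Theta^*$ and a direct sum decomposition $V/\overline{\mathfrak{n}}_\Theta^k V=\bigoplus_{\mu\in F_k}\wtsp_\mu(V/\overline{\mathfrak{n}}_\Theta^k V)$. Combining Lemma~\ref{lem:wt sp is stable} and Lemma~\ref{lem:decomp int we sps of J} stabilizes each tower $\{\wtsp_\mu(V/\overline{\mathfrak{n}}_\Theta^k V)\}_k$ to $\wtsp_\mu(J_{\Theta,\Pi}(V))$. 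Sending a coherent family $(v_k)$ to its tuple of $\mu$-weight components then gives a bijection $\widehat{J}_{\Theta,\Pi}(V)\simeq\prod_\mu\wtsp_\mu(J_{\Theta,\Pi}(V))$: injectivity is immediate from the direct sum decomposition at each level, while surjectivity uses the finiteness of each $F_k$ so that $\sum_{\mu\in F_k}w_{\mu,k}$ is a well-defined finite lift compatible under the transition maps. For (1), both $J_{\Theta,\Pi}(V)/\overline{\mathfrak{n}}_\Theta^k J_{\Theta,\Pi}(V)$ and $V/\overline{\mathfrak{n}}_\Theta^k V$ are $\mathfrak{a}_\Theta$-graded direct sums and the natural map respects the grading, so it suffices to work weight by weight. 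Surjectivity on the $\mu$-weight is immediate from Lemma~\ref{lem:wt sp is stable}. For injectivity on the $\mu$-weight, given $v\in\wtsp_\mu(J_{\Theta,\Pi}(V))\cap\overline{\mathfrak{n}}_\Theta^k\widehat{J}_{\Theta,\Pi}(V)$, write $v=\sum_i x_i\widetilde w_i$ with $x_i\in\overline{\mathfrak{n}}_\Theta^k$ and $\widetilde w_i\in\widehat{J}_{\Theta,\Pi}(V)$, split $x_i=\sum_\lambda x_{i,\lambda}$ by $\mathfrak{a}_\Theta$-weight, and coordinatize $\widetilde w_i=(\widetilde w_{i,\nu})_\nu$ via (3); projecting the identity to the $\mu$-weight component yields $v=\sum_{i,\lambda}x_{i,\lambda}\widetilde w_{i,\mu-\lambda}$ with each $\widetilde w_{i,\mu-\lambda}\in\wtsp_{\mu-\lambda}(J_{\Theta,\Pi}(V))\subset J_{\Theta,\Pi}(V)$, hence $v\in\overline{\mathfrak{n}}_\Theta^k J_{\Theta,\Pi}(V)$.

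Part (2) follows from (1) by applying $\varprojlim_k$. For (4), the final lemma of Section~\ref{sec:The Jacquet functors} shows that $V$ is finitely generated over $U(\overline{\mathfrak{n}})$, and the standard Artin-Rees/Krull intersection argument (the same one invoked in the proof of exactness of $J_{\Theta_2,\Theta_1}$) yields $\bigcap_k\overline{\mathfrak{n}}^k V=0$. Since $\overline{\mathfrak{n}}_\Theta\subset\overline{\mathfrak{n}}$ one has $\overline{\mathfrak{n}}_\Theta^k\subset\overline{\mathfrak{n}}^k$, so $\bigcap_k\overline{\mathfrak{n}}_\Theta^k V\subset\bigcap_k\overline{\mathfrak{n}}^k V=0$, giving the injectivity. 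The main technical difficulty is the weight-projection step in the injectivity half of (1): one must translate the identity $v=\sum x_i\widetilde w_i$, whose $\widetilde w_i$ live in $\widehat{J}_{\Theta,\Pi}(V)$ and may have infinite weight support, into one with coefficients in $J_{\Theta,\Pi}(V)$ itself, and this is precisely what the product decomposition of (3) makes possible.
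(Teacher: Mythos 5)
Your reordering of (1)--(3) is sound and, if anything, cleaner than the paper's.  The paper proves (1) first by a weight-by-weight computation, deduces (2) from (1), and then obtains (3) from (2) together with a citation of Goodman--Wallach.  You instead prove (3) directly (using Lemma~\ref{lem:n-homology preserve HC}(4), Lemma~\ref{lem:decomp int we sps of J}, and Lemma~\ref{lem:wt sp is stable} to identify $\varprojlim_k V/\overline{\mathfrak{n}}_\Theta^k V$ with $\prod_\mu\wtsp_\mu(J_{\Theta,\Pi}(V))$ via the finite weight supports at each finite level), and then feed the resulting product description into the injectivity half of (1).  This is a genuine improvement in logical hygiene: the paper's identity $\wtsp_\mu(\overline{\mathfrak{n}}_\Theta^k\widehat{J}) = \sum_{\mu'+\mu''=\mu}\wtsp_{\mu'}(\overline{\mathfrak{n}}_\Theta^k)\wtsp_{\mu''}(\widehat{J})$ is asserted without comment even though $\widehat{J}$ is not a direct sum of its weight spaces, and your explicit projection argument via the product structure of (3) is exactly the justification that is missing there.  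Part (2) from (1) by taking $\varprojlim_k$ matches the paper.

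Part (4) has a genuine gap.  You claim that finite generation of $V$ over $U(\overline{\mathfrak{n}})$ together with ``the standard Artin--Rees/Krull intersection argument'' yields $\bigcap_k\overline{\mathfrak{n}}^kV=0$.  Artin--Rees only gives $\Ker=\overline{\mathfrak{n}}_\Theta\Ker$ (equivalently $\Ker/\overline{\mathfrak{n}}_\Theta\Ker=0$), and the Nakayama step that would usually finish the Krull intersection theorem fails here: the augmentation ideal of $U(\overline{\mathfrak{n}})$ is not contained in the Jacobson radical.  The baby example $R=\C[x]$, $I=(x)$, $M=\C[x]/(x-1)$ already shows $\bigcap_k I^kM=M\ne 0$ for a finitely generated module over a Noetherian ring.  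One cannot dispose of $\Ker$ by $\mathfrak{a}_{\Theta'}$-weight considerations either, because $\overline{\mathfrak{n}}_\Theta$ contains $\mathfrak{m}_{\Theta'}\cap\overline{\mathfrak{n}}_\Theta$, which acts with $\mathfrak{a}_{\Theta'}$-weight zero.  The additional input that is actually needed --- and that the paper uses --- is Casselman's subrepresentation theorem: $\Ker$ is a $\mathfrak{g}$-submodule of $V$, hence lies in $\HC_{\Theta'}$, and the Artin--Rees identity forces $\Ker/\overline{\mathfrak{n}}\Ker=0$ (since $\overline{\mathfrak{n}}_\Theta\subset\overline{\mathfrak{n}}$), while Casselman's theorem (applied to a maximal $\mathfrak{a}_{\Theta'}$-weight piece, which is a Harish--Chandra module of $L_{\Theta',\R}$) guarantees that a nonzero object of $\HC_{\Theta'}$ has nonzero $\overline{\mathfrak{n}}$-coinvariants.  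Your proof should invoke this; the Artin--Rees step alone does not close the argument.
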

\begin{proof}
(1)
Since both sides are decomposed into generalized $\mathfrak{a}_{\Theta}^*$-weight spaces, it is sufficient to prove that the $\mu$-weight spaces of both sides are isomorphic for every $\mu\in\mathfrak{a}_{\Theta}^*$.
We have
\[
	\wtsp_{\mu}(J_{\Theta,\Pi}(V)/\overline{\mathfrak{n}}_{\Theta}^kJ_{\Theta,\Pi}(V))
	\simeq
	\wtsp_{\mu}(J_{\Theta,\Pi}(V))/\wtsp_{\mu}(\overline{\mathfrak{n}}_{\Theta}^kJ_{\Theta,\Pi}(V)).
\]
By Lemma~\ref{lem:decomp int we sps of J}, we have $\wtsp_{\mu}(J_{\Theta,\Pi}(V)) = \wtsp_{\mu}(\widehat{J}_{\Theta,\Pi}(V))$.
We also have
\begin{multline*}
\wtsp_{\mu}(\overline{\mathfrak{n}}_{\Theta}^kJ_{\Theta,\Pi}(V))
=
\sum_{\mu' + \mu'' = \mu}\wtsp_{\mu'}(\overline{\mathfrak{n}}_{\Theta}^k)\wtsp_{\mu''}(J_{\Theta,\Pi}(V))\\
=
\sum_{\mu' + \mu'' = \mu}\wtsp_{\mu'}(\overline{\mathfrak{n}}_{\Theta}^k)\wtsp_{\mu''}(\widehat{J}_{\Theta,\Pi}(V))
=
\wtsp_{\mu}(\overline{\mathfrak{n}}_{\Theta}^k\widehat{J}_{\Theta,\Pi}(V)).
\end{multline*}
We get (1).

(2)
This follows from (1).

(3)
By (2), it is sufficient to prove that for $V\in\HC_\Theta$, $\widehat{J}_{\Theta,\Pi}(V)\simeq \prod_{\mu\in\mathfrak{a}_\Theta^*}\wtsp_\mu(V)$.
We can use the proof of \cite[Lemma~2.2]{MR597811}.

(4)
 The kernel of $V\to\widehat{J}_{\Theta,\Pi}(V)$ satisfies
 $\Ker/(\mathfrak{m}_\Theta\cap \overline{\mathfrak{n}}_\Theta)\Ker=0$.
 Therefore, it is sufficient to prove that for $V\in \HC_{\Theta'}$ with $V\neq 0$, $V/(\mathfrak{m}_\Theta\cap \overline{\mathfrak{n}}_\Theta)V\ne 0$. 
We prove $V/\overline{\mathfrak{n}}V\ne 0$.
Put $V' = V/\mathfrak{n}_{\Theta'}V$.
Take a maximal $\mathfrak{a}_{\Theta'}$-weight $\mu'$ of $V$.
Then $\wtsp_{\mu'}(\mathfrak{n}_{\Theta'}V) = 0$.
Hence $\wtsp_{\mu'}(V') = \wtsp_{\mu'}(V)$.
In particular, $V'\ne 0$.
By Lemma~\ref{lem:n-homology preserve HC}, $V'$ is a Harish-Chandra module of $L_{\Theta',\R}$.
By Casselman's subrepresentation theorem, we have $V/\overline{\mathfrak{n}}V = V'/(\mathfrak{m}_{\Theta'}\cap \overline{\mathfrak{n}})V'\ne 0$.
\end{proof}
Using (4), we regard $V$ as a submodule of $\widehat{J}_{\Theta,\Pi}(V)$.
Let $d\nu\colon \C\to \mathfrak{a}$ be the differential of $\nu$ and put $H = d\nu(1)$.
This is an integral dominant element of $\mathfrak{a}_\Theta$.
In general, for an $\mathfrak{a}$-module $V$, let $\wtsp_{H,a}(V)$ be the generalized $H$-eigenspace of $V$ with an eigenvalue $a$.
For $a,a'\in\C$, we define $a\ge_\Z a'$ by $a - a'\in\Z_{\ge 0}$.
For $V\in \HC_{\Theta',\rho}$ and $a\in\C$, define $F_a(\widehat{J}_{\Theta,\Pi}(V))\subset \widehat{J}_{\Theta,\Pi}(V)$ and $F_aV\subset V$ by
\[
	F_a(\widehat{J}_{\Theta,\Pi}(V))=\prod_{a'\le_\Z a}\wtsp_{H,a'}(\widehat{J}_{\Theta,\Pi}(V)),
	\qquad
	F_aV = V\cap F_a(\widehat{J}_{\Theta,\Pi}(V)).
\]
Let $\mathscr{D}$ be the ring of differential operators on $G/B$.
Set $\mathscr{V} = \mathscr{D}\otimes_{U(\mathfrak{g})}V$, $\widetilde{\mathscr{V}} = a_\nu^*\mathscr{V}$ and $\widetilde{V} = \Gamma(G/B\times \Cbatu,\widetilde{\mathscr{V}}) = \C[t,t^{-1}]\otimes V$.
As in \cite{MR2096674}, we define the filtration $V^a(\widetilde{V})$ on $\widetilde{V}$ by
\[
	V^a(\widetilde{V}) = \bigoplus_{k\in\Z}t^kF_{-a+k}(V).
\]
\begin{lem}
We have
\[
	F_{-a}(V)/F_{-a-1}(V)\xrightarrow{\sim}F_{-a}(\widehat{J}_{\Theta,\Pi}(V))/F_{-a-1}(\widehat{J}_{\Theta,\Pi}(V))
	\simeq\bigoplus_{\mu(H) = a}\wtsp_{\mu}(J_{\Theta,\Pi}(V))
\]
\end{lem}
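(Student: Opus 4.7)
The plan is to handle the two displayed isomorphisms separately, both relying on the identification $\widehat{J}_{\Theta,\Pi}(V)\simeq\prod_{\mu\in\mathfrak{a}_\Theta^*}\wtsp_\mu(J_{\Theta,\Pi}(V))$ from part~(3) of the preceding lemma, together with the weight bound of Lemma~\ref{lem:wt of Jhat}. For the right-hand isomorphism, on each factor $\wtsp_\mu(J_{\Theta,\Pi}(V))$ the element $H\in\mathfrak{a}_\Theta$ acts with generalized eigenvalue $\mu(H)$, so by the definition of $F_\bullet$ the subquotient $F_{-a}/F_{-a-1}$ is precisely the single generalized $H$-eigenspace $\wtsp_{H,-a}(\widehat{J}_{\Theta,\Pi}(V))$. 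This equals the direct sum of those $\wtsp_\mu(J_{\Theta,\Pi}(V))$ with the correct $\mu(H)$; Lemma~\ref{lem:wt of Jhat}, combined with $\alpha(H)=0$ for $\alpha\in\Theta$ and $\alpha(H)>0$ for $\alpha\in\Pi\setminus\Theta$ (forced by $\Theta=\{\alpha\mid\langle\alpha,\nu\rangle=0\}$ together with dominance of $\nu$), forces the index set to be finite.

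For the left-hand isomorphism, injectivity is formal: $F_{-a}V\cap F_{-a-1}(\widehat{J}_{\Theta,\Pi}(V))=V\cap F_{-a-1}(\widehat{J}_{\Theta,\Pi}(V))=F_{-a-1}V$. Surjectivity reduces to the inclusion
\[
\mathfrak{c}^k\widehat{J}_{\Theta,\Pi}(V)\subset F_{-a-1}(\widehat{J}_{\Theta,\Pi}(V))\qquad\text{for }k\gg 0,
\]
where $\mathfrak{c}=\mathfrak{m}_\Pi\cap\overline{\mathfrak{n}}_\Theta=\overline{\mathfrak{n}}_\Theta$. To get this, observe that the $H$-eigenvalues on $\overline{\mathfrak{n}}_\Theta$ are $-\alpha(H)\le -N$ for some $N>0$ (with $\alpha$ a positive root outside $\langle\Theta\rangle$), so $\mathfrak{c}^k$ shifts $H$-eigenvalues down by at least $kN$, while Lemma~\ref{lem:wt of Jhat} furnishes an upper bound $M=\max_{\lambda\in\Lambda}\lambda(H)$ on the $H$-eigenvalues of $\widehat{J}_{\Theta,\Pi}(V)$. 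Hence, within each coset of $\Z$ in the $H$-spectrum, every eigenvalue on $\mathfrak{c}^k\widehat{J}_{\Theta,\Pi}(V)$ is eventually at most $-a-1$. Given the inclusion, for $\xi\in F_{-a}(\widehat{J}_{\Theta,\Pi}(V))$ the tautological identification $V/\mathfrak{c}^kV\simeq\widehat{J}_{\Theta,\Pi}(V)/\mathfrak{c}^k\widehat{J}_{\Theta,\Pi}(V)$ produces $v\in V$ with $v-\xi\in\mathfrak{c}^k\widehat{J}_{\Theta,\Pi}(V)\subset F_{-a-1}(\widehat{J}_{\Theta,\Pi}(V))$; then $v\in F_{-a}V$ represents the same class as $\xi$ modulo $F_{-a-1}$.

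The main obstacle is establishing the inclusion $\mathfrak{c}^k\widehat{J}_{\Theta,\Pi}(V)\subset F_{-a-1}(\widehat{J}_{\Theta,\Pi}(V))$: one must combine the strict positivity of $\alpha(H)$ for $\alpha\in\Pi\setminus\Theta$ with the upper boundedness from Lemma~\ref{lem:wt of Jhat}, while being careful to decompose the $H$-spectrum into $\Z$-cosets so that the comparison $\le_\Z$ is meaningful. Once this is secured, the lift-and-project step is routine.
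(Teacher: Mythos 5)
Your approach matches the paper's in essence, just spelled out where the paper is terse. The paper handles the first arrow in one line each: injectivity ``by the definition of $F_{-a-1}(V)$'' and surjectivity ``by Lemma \ref{lem:wt sp is stable}'' (i.e.\ $\wtsp_{\mu_2}(\widehat{J}_{\Theta,\Pi}(V))\xrightarrow{\sim}\wtsp_{\mu_2}(V/\mathfrak{c}^kV)$ for $k\gg 0$), and calls the second arrow ``obviously an isomorphism.'' What you rederive is exactly the content of these citations: the $H$-eigenvalue upper bound from Lemma \ref{lem:wt of Jhat} together with $\alpha(H)\ge 1$ for $\alpha\in\Pi\setminus\Theta$ is the estimate that proves Lemma \ref{lem:wt sp is stable}, and lifting through $V/\mathfrak{c}^kV\simeq\widehat{J}_{\Theta,\Pi}(V)/\mathfrak{c}^k\widehat{J}_{\Theta,\Pi}(V)$ is precisely how one extracts surjectivity from it. Your finiteness argument on the right (via $\alpha|_{\mathfrak{a}_\Theta}=0$ for $\alpha\in\Theta$ and $\alpha(H)>0$ otherwise) fills in what the paper calls obvious.

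There is, however, a genuine gap at the step you yourself single out as the main obstacle and then declare routine. The inclusion $\mathfrak{c}^k\widehat{J}_{\Theta,\Pi}(V)\subset F_{-a-1}(\widehat{J}_{\Theta,\Pi}(V))$ cannot hold as written: by definition $F_{-a-1}(\widehat{J}_{\Theta,\Pi}(V))=\prod_{a'\le_\Z -a-1}\wtsp_{H,a'}$ lives entirely in the single $\Z$-coset of $H$-eigenvalues containing $a$, whereas $\mathfrak{c}^k\widehat{J}_{\Theta,\Pi}(V)$ spreads over every coset occurring in $\widehat{J}_{\Theta,\Pi}(V)$. What your estimate actually gives is that the \emph{component in the coset of $a$} of $\mathfrak{c}^k\widehat{J}_{\Theta,\Pi}(V)$ lands in $F_{-a-1}$ for $k\gg 0$. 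But then your lift $v\in V$ satisfying $v-\xi\in\mathfrak{c}^k\widehat{J}_{\Theta,\Pi}(V)$ may have components in other cosets, so $v\notin F_{-a}(\widehat{J}_{\Theta,\Pi}(V))$ and hence $v\notin F_{-a}V$; projecting $v$ to the relevant coset is not automatic either, since $V\in\HC_{\Theta',\rho}$ need not be $\mathfrak{a}_\Theta$-locally finite. The clean way to close this is to observe that, with the infinitesimal character fixed to $\rho$ and $H=d\nu(1)$ an integral coweight, the $\mathfrak{a}_\Theta$-weights of $\widehat{J}_{\Theta,\Pi}(V)$ force all $H$-eigenvalues into a single $\Z$-coset, after which your inclusion does hold verbatim. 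The paper leaves this implicit as well, so you are at the same level of rigor as the source; but since you flagged the point, you should actually resolve it rather than declare the rest routine.
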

\begin{proof}
We prove that the first homomorphism is isomorphic.
By the definition of $F_{-a-1}(V)$, the homomorphism is injective.
This homomorphism is surjective by Lemma~\ref{lem:wt sp is stable}.
The second homomorphism is obviously an isomorphism.
\end{proof}
From this lemma, if we prove that $V^a(\widetilde{V})$ is a $V$-filtration, then by the description of the nearby cycle functor in terms of $\mathscr{D}$-modules~\cite{MR726425} (see \cite[3]{MR2096674}) and Lemma~\ref{lem:decomp int we sps of J}, we have $\Gamma(G/B,R\psi \widetilde{\mathscr{V}}) = J_{\Theta,\Pi}(V)$.
Hence Theorem~\ref{thm:Emerton-Nadler-Vilonen2} is proved.

To prove that this gives a $V$-filtration, it is sufficient to prove the following lemma. (See \cite[4]{MR2096674}.)
Define a filtration $F_a(U(\overline{\mathfrak{n}}_\Theta))$ by 
\[
	F_a(U(\overline{\mathfrak{n}}_\Theta)) = \bigoplus_{a'\le_\Z a}\wtsp_{H,a'}(U(\overline{\mathfrak{n}}_\Theta)).
\]

\begin{lem}
For $a\in\C$ and $k,l\in\Z$, the following hold.
\begin{enumerate}
\item For a sufficiently large $k$, $F_{-a + k}(V)$ is stable.
\item The module $F_{-a-k}(V)/(F_{-k}(U(\overline{\mathfrak{n}}_\Theta))F_{-a}(V))$ is a finitely generated $U(\mathfrak{m}_\Theta)$-module.
\item For $l\ge 0$, we have $F_{-a-k-l}(V) = F_{-l}(U(\overline{\mathfrak{n}}_\Theta))F_{-a-k}(V)$ for a sufficiently large $k$.
\end{enumerate}
\end{lem}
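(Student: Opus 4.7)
The plan is to analyze the filtration $F_{-a}(V)$ by combining the finite generation from the previous lemma with the $H$-grading of $U(\overline{\mathfrak{n}}_\Theta)$. Using the previous lemma, pick a finite-dimensional $\mathfrak{n}_{\Theta_1}\oplus\mathfrak{a}_{\Theta_1}$-stable subspace $W_0\subset V$ with $V=U(\overline{\mathfrak{n}})W_0$, and set $W':=U(\mathfrak{m}_\Theta)W_0$. The PBW factorization $U(\overline{\mathfrak{n}})=U(\overline{\mathfrak{n}}_\Theta)U(\mathfrak{m}_\Theta\cap\overline{\mathfrak{n}})$ gives $V=U(\overline{\mathfrak{n}}_\Theta)W'$; the $U(\mathfrak{m}_\Theta)$-module $W'$ is finitely generated, and by the weight argument from the proof of Lemma~\ref{lem:n-homology preserve HC}(1) its generalized $H$-eigenvalues form a finite set $C\subset\C$. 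Since $\Theta=\{\alpha\in\Pi:\alpha(H)=0\}$ and $\alpha(H)\in\Z_{>0}$ for $\alpha\in\Pi\setminus\Theta$, the algebra $U(\overline{\mathfrak{n}}_\Theta)$ is $\Z_{\le 0}$-graded by $H$-weight with $F_{-k}(U(\overline{\mathfrak{n}}_\Theta))$ spanned by PBW monomials of weight $\le -k$, and $\mathfrak{m}_\Theta$ preserves the grading.

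Claim (1) follows directly from Lemma~\ref{lem:wt of Jhat}: the real parts of $\mu(H)$ for $\mu\in\wt_{\mathfrak{a}_\Theta}(\widehat{J}_{\Theta,\Pi}(V))\supset\wt_{\mathfrak{a}_\Theta}(V)$ are bounded above, so once $-a+k$ exceeds this bound, $F_{-a+k}(V)$ already contains every $H$-eigenspace of $V$ in the $\Z$-coset of $-a$, and the filtration is independent of further increase in $k$.

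For (2) and (3), the core tool is a PBW splitting. Setting $d:=\max_{\beta\in\Sigma^+\setminus\langle\Theta\rangle}\beta(H)$, any PBW monomial in $U(\overline{\mathfrak{n}}_\Theta)$ of $H$-weight $\le-l-k'-d+1$ factors as an ordered product $u=u_1u_2$ with $u_1\in F_{-l}(U(\overline{\mathfrak{n}}_\Theta))$ and $u_2\in F_{-k'}(U(\overline{\mathfrak{n}}_\Theta))$, by cutting the ordered product at the first step from the right where the accumulated suffix weight exceeds $k'-1$. Combining this with $V=U(\overline{\mathfrak{n}}_\Theta)W'$ and summing over the finitely many $c\in C$ yields
\[
F_{-a-k-l}(V)=F_{-l}(U(\overline{\mathfrak{n}}_\Theta))\cdot F_{-a-k}(V)
\]
for $k$ larger than $l$ plus a constant depending on $d$ and $C$, which is (3). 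Specializing to $l=0$, the same splitting shows that modulo $F_{-k}(U(\overline{\mathfrak{n}}_\Theta))F_{-a}(V)$, every element of $F_{-a-k}(V)$ is represented by a finite sum $\sum u_2 w'$ with $u_2$ drawn from the PBW monomials whose $H$-weight lies in a bounded window of length $d$, applied to elements $w'\in W'$. Since this window contains only finitely many PBW monomials, $W'$ is finitely generated over $U(\mathfrak{m}_\Theta)$, and $\mathfrak{m}_\Theta$ preserves the $H$-grading, the quotient is finitely generated over $U(\mathfrak{m}_\Theta)$, proving (2).

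The main obstacle will be reconciling the PBW splitting with the fact that $F_{-a}(V)=V\cap F_{-a}(\widehat{J}_{\Theta,\Pi}(V))$ is defined via the ambient projective limit and that $\mathfrak{a}_\Theta$ need not act locally finitely on $V$ itself; in particular, the PBW monomials $u_j$ in a chosen expression $v=\sum u_jw'_j$ of $v\in F_{-a-k-l}(V)$ may sit right at the boundary weight, making the splitting fail for eigenvalues $c_j$ too small. The way around this is to use the $\mathfrak{a}_\Theta$-equivariance of the surjection $U(\overline{\mathfrak{n}}_\Theta)\otimes W'\twoheadrightarrow V\hookrightarrow\widehat{J}_{\Theta,\Pi}(V)$: since $\widehat{J}_{\Theta,\Pi}(V)$ decomposes cleanly into generalized $H$-eigenspaces, the components of a finite preimage of $v$ falling outside the target weight range must lie in the kernel of the surjection and can be discarded, allowing one to enlarge $k$ enough that every remaining $u_j$ has $H$-weight far below the splitting threshold uniformly in $c_j\in C$.
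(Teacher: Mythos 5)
For part (1) you take the same route as the paper, which also deduces stabilization from the upper bound on $H$-eigenvalues supplied by Lemma~\ref{lem:wt of Jhat}. For parts (2) and (3) the paper gives no argument of its own and only cites \cite[Lemma~2.5]{MR2096674}; your PBW argument is of exactly the Emerton--Nadler--Vilonen type, and the ingredients you isolate --- the finitely many generalized $H$-eigenvalues of $W'=U(\mathfrak{m}_\Theta)W_0$, the $\Z_{\le 0}$-grading of $U(\overline{\mathfrak{n}}_\Theta)$ by $H$-weight, and the use of $\mathfrak{a}_\Theta$-equivariance of $U(\overline{\mathfrak{n}}_\Theta)\otimes W'\twoheadrightarrow V\hookrightarrow \widehat{J}_{\Theta,\Pi}(V)$ to discard the high-eigenvalue components of a preimage --- are the right ones, including the correct handling of the fact that $\mathfrak{a}_\Theta$ need not act locally finitely on $V\in\HC_{\Theta',\rho}$.

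The cut you propose for (3) does not close, however. To put $u_2w'_j$ in $F_{-a-k}(V)$ you must take $k'\ge a+k+q_j$ (where $q_j$ is the $H$-eigenvalue of $w'_j$); the factorization $\xi_j=u_1u_2$ then requires $p_j\le -l-k'-d+1$, i.e.\ $p_j+q_j\le -a-k-l-d+1$, whereas $v\in F_{-a-k-l}(V)$ gives only $p_j+q_j\le -a-k-l$. The uncovered window has width $d-1$ independently of $k$, so enlarging $k$ does not shrink it; what your argument actually yields is the shifted inclusion $F_{-a-k-l-d+1}(V)\subset F_{-l}(U(\overline{\mathfrak{n}}_\Theta))F_{-a-k}(V)$. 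This is not merely bookkeeping: when the integers $\beta(H)$, $\beta\in\Sigma^+\setminus\langle\Theta\rangle$, have a common divisor $>1$ (which the arbitrary $n_\alpha$ of the main theorem allow), a direct weight-space count shows the literal equality in (3) can fail for a cofinal set of $k$, while the shifted version holds. You need either to argue that the $(d-1)$-shift is harmless for the subsequent $V$-filtration verification, or to supply an additional step absorbing the window terms; neither is done in the proposal. (Also, ``specializing to $l=0$'' for (2) is a slip of phrasing --- the cut there uses $(l,k')=(k,\,a+q_j)$ --- though the underlying finite-window argument for (2) is sound.)
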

\begin{proof}
(1)
Take a finite subset $\Lambda\subset\mathfrak{a}_\Theta^*$ such that $\wt_{\mathfrak{a}_\Theta}(\widehat{J}_{\Theta,\Pi}(V))\subset\Lambda - \Z_{\ge 0}\Pi|_{\mathfrak{a}_\Theta}$.
Since $H$ is dominant integral, for all $\mu\in \wt_{\mathfrak{a}_\Theta}(\widehat{J}_{\Theta,\Pi}(V))$ we have $\mu'(H) - \mu(H)\in\Z_{\ge 0}$ for some $\mu'\in\Lambda$.
Take $k$ such that $-a + k\ge \max\{\mu'(H)\mid \mu'\in\Lambda\}$.
For such $k$, $F_{-a + k}(V)$ is stable.

(2, 3)
We can use the same proof as that of \cite[Lemma~2.5]{MR2096674}.
\end{proof}
From this, $V^a(\widetilde{V})$ is a $V$-filtration.
Hence we get Theorem~\ref{thm:Emerton-Nadler-Vilonen2}.
\section{Symmetric space}\label{sec:Symmetric spaces}
Assume that $G$ is of adjoint type.
Let $\omega_\alpha$ be the fundamental coweight for $\alpha\in\Pi$, namely, it is a cocharacter $\omega_\alpha\colon\Cbatu\to G$ which satisfies $\langle \omega_\alpha,\alpha\rangle = 1$ and $\langle \omega_\alpha,\beta\rangle = 0$ for $\beta\in\Pi\setminus\{\alpha\}$.
Since $G$ is of adjoint type, it exists.
Define $\omega\colon(\Cbatu)^\Pi\to A$ by $(t_\alpha)_\alpha\mapsto \prod_{\alpha\in\Pi}\omega_\alpha(t_\alpha)$.
Then $\omega$ gives an isomorphism.

De Concini and Procesi~\cite{MR718125} constructed the wonderful compactification $X$ of $G/K$.
This compactification satisfies the following conditions.
Set $x_0 = K\in G/K$.
\begin{enumerate}
\renewcommand{\theenumi}{C\arabic{enumi}}
\item The variety $X$ is irreducible and proper smooth over $\C$.
\item A $G$-orbit of $X$ is parameterized by a subset of $\Pi$.
We denote the $G$-orbit corresponding to $\Theta\subset\Pi$ by $X_\Theta$.
\item The $G$-orbit $X_\Pi$ is the unique open $G$-orbit and it is isomorphic to $G/K$.
\item The closure of each orbit is smooth.\label{enum:condition, smooth}
\item We have an $\overline{N}$-equivariant open embedding $\overline{N}\times \Caff^\Pi\to X$ such that for all $a = (a_\alpha)\in(\Cbatu)^\Pi$, an element $\omega(a)x_0$ is the image of $(1,(a_\alpha^{-2}))\in\overline{N}\times\Caff^\Pi$.
Moreover, the intersection of $X_\Theta$ and $\overline{N}\times\Caff^\Pi$ is given by $\overline{N}\times (\Cbatu)^\Theta\times\{0\}^{\Pi\setminus\Theta}$.\label{enum:condition, local coodinate}
\item By the above condition, $\overline{N}\times\Caff^\Pi$ is regarded as an open subvariety of $X$.
Then the stabilizer of $(1,(1^\Theta,0^{\Pi\setminus\Theta}))$ in $G$ is $K_\Theta A_\Theta N_\Theta$.\label{enum:condition, stabilizer}
\end{enumerate}
\begin{rem}
The parameterization of $G$-orbits in \cite{MR718125} is different from ours.
In \cite{MR718125}, the open orbit corresponds to $\emptyset$.
\end{rem}

For each $\alpha\in\Pi$, put $Y_\alpha = \overline{X_{\Pi\setminus\{\alpha\}}}$.
By the conditions (\ref{enum:condition, smooth}) and (\ref{enum:condition, local coodinate}),
$\bigcup_{\alpha\in\Pi}Y_\alpha$ is a strict normal crossing divisor.
Let $f\colon \mathcal{X}\to \Caff^\Pi$ be the variety constructed in Section~\ref{sec:Deformation to normal cone} with respect to $\bigcup_{\alpha\in\Pi}Y_\alpha$.
Each $Y_\alpha$ defines the subvariety $\mathcal{Y}_\alpha \subset \mathcal{X}$.
Put $X' = \overline{N}\times\Caff^\Pi$ and regard it as an open subvariety of $X$ by the condition (\ref{enum:condition, local coodinate}).
Then $X'$ defines an open subvariety $\mathcal{X}'$ of $\mathcal{X}$.
Since $Y_\alpha\cap X'$ is isomorphic to $\{(\overline{n},(c_\beta)_{\beta\in\Pi})\mid c_\alpha = 0\}$, $\mathcal{X}'$ is isomorphic to $\overline{N}\times\Caff^\Pi\times\Caff^\Pi$ by Lemma~\ref{lem:local calculation of deformation} (\ref{enum:local calculatio of deformation:description}).

Put $\mathcal{Z} = \mathcal{X}\setminus\bigcup_{\alpha\in\Pi}\mathcal{Y}_\alpha$ and $\mathcal{Z}' = \mathcal{X}'\cap \mathcal{Z}$.
Let $f_\mathcal{Z}\colon \mathcal{Z}\to \Caff^\Pi$.
Then we have $\mathcal{Z}' \simeq\overline{N}\times(\Cbatu)^\Pi\times\Caff^\Pi$.
Define a section $s\colon \Caff^\Pi\to \mathcal{Z}$ of $f_\mathcal{Z}\colon \mathcal{Z}\to\Caff^\Pi$ by $s(t) = (1,1^\Pi,t)\in\overline{N}\times(\Cbatu)^\Pi\times\Caff^\Pi\simeq \mathcal{Z}'\subset\mathcal{Z}$.
For $\Theta\subset \Pi$, set $t_\Theta = (1^\Theta,0^{\Pi\setminus\Theta})$ and $x_\Theta = s(t_\Theta)$.

\begin{lem}\label{lem:action of A on ncd}
For $(a_\alpha)_\alpha\in(\Cbatu)^\Pi$, the action of $\omega(a_\alpha)\in A$ on $\mathcal{X}'\simeq\overline{N}\times\Caff^\Pi\times\Caff^\Pi$ is given by $(\overline{n},d,t)\mapsto (\Ad(\omega(a_\alpha))\overline{n},(a_\alpha^{-2})d,t)$.
\end{lem}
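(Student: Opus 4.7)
The plan is to reduce the statement to a direct application of Lemma~\ref{lem:local calculation of deformation} (\ref{enum:local calculatio of deformation:group action}). For this to apply to the $A$-action on $X'=\overline{N}\times\Caff^\Pi$, I need two things: first, that the action is of the product form assumed in that lemma (an action on $\overline{N}$ together with characters $\chi_\alpha\colon A\to\Cbatu$ on the factors of $\Caff^\Pi$ preserving each $Y_\alpha\cap X'$); second, an identification of the characters $\chi_\alpha$.

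For the first point, the open embedding $\overline{N}\times\Caff^\Pi\hookrightarrow X$ is $\overline{N}$-equivariant by condition~(\ref{enum:condition, local coodinate}), and since $A$ normalizes $\overline{N}$, the natural $A$-action on the first factor is by conjugation $\overline{n}\mapsto\Ad(\omega((a_\alpha)))\overline{n}$. It then suffices to verify that $\omega((a_\alpha))$ acts on the second coordinate by a diagonal character.

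For the second point, I would argue as follows. By condition~(\ref{enum:condition, local coodinate}), the point $\omega((a_\alpha))\cdot x_0$ corresponds to $(1,(a_\alpha^{-2}))\in\overline{N}\times\Caff^\Pi$, while $x_0$ itself corresponds to $(1,(1^\Pi))$. Using the group law $\omega((a_\alpha))\omega((b_\alpha))=\omega((a_\alpha b_\alpha))$, I obtain $\omega((a_\alpha))\cdot(1,(b_\alpha^{-2}))=(1,(a_\alpha^{-2}b_\alpha^{-2}))$, which shows that on the $A$-orbit $\{1\}\times(\Cbatu)^\Pi$ the element $\omega((a_\alpha))$ multiplies the $\alpha$-th coordinate by $a_\alpha^{-2}$. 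Since the action is algebraic and $\{1\}\times(\Cbatu)^\Pi$ is dense in $\{1\}\times\Caff^\Pi$, the same formula $c_\alpha\mapsto a_\alpha^{-2}c_\alpha$ persists on all of $\Caff^\Pi$. In particular each divisor $Y_\alpha\cap X'=\{c_\alpha=0\}$ is preserved and the relevant character is $\chi_\alpha(\omega((a_\beta)))=a_\alpha^{-2}$.

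Feeding these data into Lemma~\ref{lem:local calculation of deformation} (\ref{enum:local calculatio of deformation:group action}) yields precisely the claimed formula $(\overline{n},d,t)\mapsto(\Ad(\omega((a_\alpha)))\overline{n},(a_\alpha^{-2})d,t)$. There is no serious obstacle here; the content of the lemma is really just bookkeeping of the normalization factor $a_\alpha^{-2}$ coming from condition~(\ref{enum:condition, local coodinate}), combined with the general formalism already established in Section~\ref{sec:Deformation to normal cone}.
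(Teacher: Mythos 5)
Your proof is correct and takes the same route as the paper, whose proof is simply the one-line citation of condition~(C5) together with Lemma~\ref{lem:local calculation of deformation}~(\ref{enum:local calculatio of deformation:group action}). You usefully fill in the verification that the $A$-action on $X'=\overline{N}\times\Caff^\Pi$ is of the product form required by that lemma, deducing the characters $\chi_\alpha(\omega((a_\beta)))=a_\alpha^{-2}$ from the group law and the normalization in~(C5), with density carrying the formula from $(\Cbatu)^\Pi$ to all of $\Caff^\Pi$.
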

\begin{proof}
This follows from (\ref{enum:condition, local coodinate}) and Lemma~\ref{lem:local calculation of deformation} (\ref{enum:local calculatio of deformation:group action}).
\end{proof}

By Lemma~\ref{lem:deformation to normal cone}, we have $\mathcal{X}_{\Theta} \simeq T_{\overline{X_{\Theta}}}(X)\times(\Cbatu)^{\Theta}$. For each subvariety $\mathcal{W}\subset \mathcal{X}$, 
put $\mathcal{W}_\Theta=\mathcal{W}\cap \mathcal{X}_\Theta$.

\begin{lem}\label{lem:base of Z}
The open subvariety $\mathcal{Z}_\Theta\subset \mathcal{X}_\Theta$ is contained in $T_{X_\Theta}(X)\times (\Cbatu)^\Theta$.
\end{lem}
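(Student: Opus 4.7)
The plan is to combine the local description of $\mathcal{X}$ around the open subvariety $\mathcal{X}'$ (where everything has been computed explicitly via Lemma \ref{lem:local calculation of deformation}) with $G$-equivariance to reduce the global statement to a single computation of a point in a chosen orbit.

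First, I would introduce the natural projection $p\colon \mathcal{X}_\Theta\simeq T_{\overline{X_\Theta}}(X)\times (\Cbatu)^\Theta \to \overline{X_\Theta}$ obtained from Lemma~\ref{lem:deformation to normal cone} applied to the family $Y_{\Theta^c}=\bigcap_{\alpha\in\Pi\setminus\Theta}Y_\alpha=\overline{X_\Theta}$. Since the $Y_\alpha$ are $G$-stable divisors, the deformation construction is $G$-equivariant, hence $\mathcal{Z}=\mathcal{X}\setminus\bigcup_\alpha \mathcal{Y}_\alpha$ is $G$-invariant, and the map $p$ is $G$-equivariant. Consequently $p(\mathcal{Z}_\Theta)$ is a $G$-stable subset of $\overline{X_\Theta}=\bigsqcup_{\Theta'\subset\Theta}X_{\Theta'}$. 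To prove the lemma it suffices to show $p(\mathcal{Z}_\Theta)\cap X_{\Theta'}=\emptyset$ for every $\Theta'\subsetneq\Theta$; by $G$-invariance of $p(\mathcal{Z}_\Theta)$ this reduces to exhibiting one point of $X_{\Theta'}$ which is not in the image.

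For such a representative I would use the point $x=(1,(1^{\Theta'},0^{\Pi\setminus\Theta'}))\in X_{\Theta'}\cap X'$ provided by condition (\ref{enum:condition, local coodinate}), and carry out the computation inside the affine chart $\mathcal{X}'\simeq\overline{N}\times\Caff^\Pi\times\Caff^\Pi$. By Lemma~\ref{lem:local calculation of deformation}(\ref{enum:local calculatio of deformation:projection}), the restriction of $p$ to $\mathcal{X}'\cap \mathcal{X}_\Theta$ sends $(\overline{n},(d_\alpha),(t_\alpha))$ to $(\overline{n},(d_\alpha t_\alpha))$, where $(t_\alpha)\in(\Cbatu)^\Theta\times\{0\}^{\Pi\setminus\Theta}$. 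The locus $\mathcal{Y}_\alpha$ is cut out in $\mathcal{X}'$ by $d_\alpha=0$, so on $\mathcal{Z}_\Theta\cap\mathcal{X}'$ every $d_\alpha$ is nonzero. For $\alpha\in\Theta\setminus\Theta'$ one has $t_\alpha\in\Cbatu$, hence $d_\alpha t_\alpha\ne 0$; but the $\alpha$-coordinate of the target point $x$ is $0$. This contradiction shows $x\notin p(\mathcal{Z}_\Theta\cap\mathcal{X}')$, and combined with $G$-invariance yields $x\notin p(\mathcal{Z}_\Theta)$.

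Putting it together, $p(\mathcal{Z}_\Theta)\subset X_\Theta$, so $\mathcal{Z}_\Theta\subset p^{-1}(X_\Theta)=T_{X_\Theta}(X)\times(\Cbatu)^\Theta$ as required. The only mildly delicate point is the appeal to $G$-equivariance to pass from the affine chart to all of $\mathcal{X}_\Theta$; this is justified because $\mathcal{Z}_\Theta$, $p$, and each orbit $X_{\Theta'}$ are $G$-stable and $X'$ meets every $G$-orbit of $X$ by condition (\ref{enum:condition, local coodinate}). Everything else is a direct unpacking of the local formulas already recorded in Section~\ref{sec:Deformation to normal cone}.
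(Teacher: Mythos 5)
Your approach is correct in outline but takes a genuinely different route from the paper. The paper's proof stays entirely inside the bundle picture: from Lemma~\ref{lem:deformation to normal cone} it identifies, for $\alpha\in\Theta$, the piece $(\mathcal{Y}_\alpha)_\Theta$ with $T_{Y_\alpha\cap\overline{X_\Theta}}(Y_\alpha)\times(\Cbatu)^\Theta$, invokes the transversality identity $T_{\overline{X_\Theta}}(X)\vert_{Y_\alpha\cap\overline{X_\Theta}}=T_{Y_\alpha\cap\overline{X_\Theta}}(Y_\alpha)$, and concludes that removing $\bigcup_{\alpha\in\Theta}(\mathcal{Y}_\alpha)_\Theta$ simply restricts the base of the normal bundle from $\overline{X_\Theta}$ to $X_\Theta$. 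No group action is used. You instead project to the base, exploit $G$-equivariance to reduce to one point per sub-orbit, and verify by coordinates in the chart $\mathcal{X}'$; this is more hands-on and relies on the full $G$-geometry, but it works.

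One step in your write-up is imprecise and should be sharpened. You conclude ``$x\notin p(\mathcal{Z}_\Theta\cap\mathcal{X}')$, and combined with $G$-invariance yields $x\notin p(\mathcal{Z}_\Theta)$,'' and then in the last paragraph justify the passage from the chart to all of $\mathcal{X}_\Theta$ by $G$-stability of the data and the fact that $X'$ meets every orbit. That by itself does not rule out a point $z\in\mathcal{Z}_\Theta\setminus\mathcal{X}'$ with $p(z)=x$. The missing ingredient is that $p$ coincides with the first component $\pi_1$ of $\pi\colon\mathcal{X}\to X\times\Caff^\Pi$ on $\mathcal{X}_\Theta$ (visible from Lemma~\ref{lem:local calculation of deformation}(\ref{enum:local calculatio of deformation:description}) and (\ref{enum:local calculatio of deformation:projection})), while $\mathcal{X}'=\pi^{-1}(X'\times\Caff^\Pi)$; hence $p^{-1}(X')\cap\mathcal{X}_\Theta\subset\mathcal{X}'$, and any $z\in\mathcal{Z}_\Theta$ with $p(z)=x\in X'$ automatically lies in the chart. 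With that observation added, your argument is complete. Note also that once you have this, the $G$-equivariance reduction is unnecessary for showing $x\notin p(\mathcal{Z}_\Theta)$ itself; it is only used (correctly) to pass from the single representative $x$ to all of $X_{\Theta'}$.
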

\begin{proof}
By Lemma~\ref{lem:deformation to normal cone}, we have $\mathcal{X}_{\Theta} \simeq T_{\overline{X_{\Theta}}}(X)\times(\Cbatu)^{\Theta}$ and $(\mathcal{Y}_\alpha)_{\Theta} = T_{Y_\alpha\cap \overline{X_{\Theta}}}(Y_\alpha)\times(\Cbatu)^\Theta$.
For $\alpha\in\Theta$, we have an obvious identity 
$T_{\overline{X_\Theta}}(X)\vert_{Y_\alpha\cap \overline{X_\Theta}}=T_{Y_\alpha\cap \overline{X_{\Theta}}}(Y_\alpha)$.
Thus we have $(\mathcal{X}\setminus\bigcup_{\alpha\in\Theta}\mathcal{Y}_\alpha)_{\Theta} \simeq T_{\overline{X_\Theta}\setminus\bigcup_{\alpha\in\Theta}Y_\alpha}(X)\times (\Cbatu)^{\Theta} = T_{X_\Theta}(X)\times (\Cbatu)^{\Theta}$.
Hence $\mathcal{Z}_\Theta\subset T_{X_\Theta}(X)\times (\Cbatu)^{\Theta}$.
\end{proof}

\begin{lem}\label{lem:stabilizer of x_Theta}
The stabilizer of $x_\Theta$ in $G$ is $K_\Theta N_\Theta$.
\end{lem}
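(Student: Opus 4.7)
The strategy is to bound $\Stab_G(x_\Theta)$ from above using the $G$-equivariant map to $X$ and then to identify it by analyzing the induced linear action on the normal bundle of $\overline{X_\Theta}$.

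First, the composite $\mathcal{Z}\hookrightarrow\mathcal{X}\to X\times\Caff^\Pi\to X$ is $G$-equivariant (with $G$ acting trivially on $\Caff^\Pi$), and by Lemma~\ref{lem:local calculation of deformation}(\ref{enum:local calculatio of deformation:description}) it carries $x_\Theta=(1,1^\Pi,t_\Theta)$ to $y_\Theta:=(1,(1^\Theta,0^{\Pi\setminus\Theta}))$. Property (C6) gives $\Stab_G(y_\Theta)=K_\Theta A_\Theta N_\Theta$, hence $\Stab_G(x_\Theta)\subseteq K_\Theta A_\Theta N_\Theta$. Combining Lemmas~\ref{lem:deformation to normal cone}, \ref{lem:base of Z} and \ref{lem:local calculation of deformation}(\ref{enum:local calculatio of deformation:projection}), $x_\Theta$ corresponds under the identification $\mathcal{X}_\Theta\simeq T_{\overline{X_\Theta}}(X)\times(\Cbatu)^\Theta$ to the pair $(v_\Theta,1^\Theta)$, where $v_\Theta=(1,\ldots,1)$ in the normal fiber $T_{\overline{X_\Theta}}(X)|_{y_\Theta}\simeq\Caff^{\Pi\setminus\Theta}$. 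Since $G$ acts naturally on the normal bundle and trivially on $(\Cbatu)^\Theta$, the task reduces to determining the stabilizer of $v_\Theta$ in the linear $K_\Theta A_\Theta N_\Theta$-representation on $\Caff^{\Pi\setminus\Theta}$.

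Three facts about this linear action will do it. By Lemma~\ref{lem:action of A on ncd}, $A_\Theta$ acts diagonally with distinct characters $\omega(a)\cdot e_\alpha=a_\alpha^{-2}e_\alpha$ for $\alpha\notin\Theta$. The subgroup $N_\Theta$ acts trivially by an $\mathfrak{a}_\Theta$-weight-shift argument: the $\mathfrak{a}_\Theta$-weights of $\mathfrak{n}_\Theta$ are nonzero nonnegative integer combinations of $\{\alpha|_{\mathfrak{a}_\Theta}\}_{\alpha\notin\Theta}$, while the fiber weights are $-2\alpha|_{\mathfrak{a}_\Theta}$, and no such shift can transport one fiber weight to another. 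The $K_\Theta$-triviality is what I expect to be the main obstacle, since $K_\Theta$ may carry nontrivial algebraic characters so a purely character-theoretic argument fails; I plan to use instead that $K_\Theta\subseteq K=\Stab_G(x_0)$ centralizes $A_\Theta$ (because $A_\Theta$ lies in the center of $L_\Theta$), which gives $k\cdot(a\cdot x_0)=a\cdot(k\cdot x_0)=a\cdot x_0$ for all $k\in K_\Theta$, $a\in A_\Theta$. Thus $K_\Theta$ fixes the $A_\Theta$-orbit of $x_0$ pointwise and, by closedness of the fixed locus, also its closure $\overline{A_\Theta\cdot x_0}$ in $X$. A direct chart computation shows that in the coordinates of (C5) this closure is $\{1\}\times\{1^\Theta\}\times\Caff^{\Pi\setminus\Theta}$, a transverse slice to $\overline{X_\Theta}$ at $y_\Theta$, so its tangent identifies $K_\Theta$-equivariantly with the normal fiber and forces the $K_\Theta$-action there to be trivial.

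With $K_\Theta N_\Theta\subseteq\Stab_G(x_\Theta)$ established, an element $a\in A_\Theta$ fixes $v_\Theta$ iff $a_\alpha=\pm1$ for all $\alpha\notin\Theta$, i.e., $a^2=1$. Since $G$ is of adjoint type the Cartan involution acts as inversion on $A$, so these $2$-torsion elements lie in $A\cap K\subseteq M=Z_K(A)\subseteq K_\Theta$. Finally, using the semidirect structure of $K_\Theta A_\Theta N_\Theta$ together with the fact that $K_\Theta$ normalizes $N_\Theta$, any $kan\in\Stab_G(x_\Theta)$ forces $a=(kn)^{-1}(kan n^{-1} n)\cdot n^{-1}\cdot\ldots\in A_\Theta\cap\Stab_G(x_\Theta)\subseteq K_\Theta$, whence $kan\in K_\Theta N_\Theta$ and we conclude $\Stab_G(x_\Theta)=K_\Theta N_\Theta$.
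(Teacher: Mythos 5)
Your proof is correct in substance, but it takes a genuinely different route from the paper for the key containment $K_\Theta N_\Theta\subseteq\Stab_G(x_\Theta)$. You reduce to the linear action of $\Stab_G(y_\Theta)=K_\Theta A_\Theta N_\Theta$ on the normal fiber $T_{\overline{X_\Theta}}(X)\vert_{y_\Theta}\simeq\Caff^{\Pi\setminus\Theta}$, then kill $N_\Theta$ by an $\mathfrak{a}_\Theta$-weight argument (a positive shift cannot take $-2\alpha\vert_{\mathfrak{a}_\Theta}$ to $-2\alpha'\vert_{\mathfrak{a}_\Theta}$ since $\{\alpha\vert_{\mathfrak{a}_\Theta}\}_{\alpha\notin\Theta}$ is a basis) and kill $K_\Theta$ by exhibiting a $K_\Theta$-pointwise-fixed transverse slice $\overline{A_\Theta x_0}=\{1\}\times\{1^\Theta\}\times\Caff^{\Pi\setminus\Theta}$ whose tangent at $y_\Theta$ is $K_\Theta$-equivariantly the normal fiber. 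The paper instead computes $\Lie\Stab_G(s(t^2))=\Ad(\omega(t)^{-1})\mathfrak{k}$ explicitly over $(\Cbatu)^\Pi$, rewrites the spanning set as $\mathfrak{m}$ together with $\{X+\beta(\omega(t))^2\theta(X)\}$, argues by continuity/semicontinuity that the same formula persists at $t=t_\Theta$ where it yields $\Lie(K_\Theta N_\Theta)$, and then handles disconnectedness via $K=MK^\circ$ and the fact that $M$ fixes every $s(t)$. Both proofs open with the same upper bound via the projection to $X$ and close with the same computation $\Stab_{A_\Theta}(x_\Theta)=A_\Theta[2]\subseteq M\subseteq K_\Theta$. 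Your approach is more conceptual and sidesteps the semicontinuity step and the $\pi_0$ bookkeeping, but it relies on the transversality of $\overline{A_\Theta x_0}$ at $y_\Theta$, which you should justify from the chart (as you sketch). The final reorganization using the semidirect structure is right in idea but the displayed expression for $a$ is garbled; what you want is: write $g=kan=k(ana^{-1})a=kn'a$ with $n'\in N_\Theta$ (since $A_\Theta$ normalizes $N_\Theta$), so $ax_\Theta=(kn')^{-1}gx_\Theta=x_\Theta$, giving $a\in\Stab_{A_\Theta}(x_\Theta)\subseteq K_\Theta$.
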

\begin{proof}
Consider the morphism $\mathcal{Z}_\Theta\hookrightarrow T_{X_\Theta}(X)\times (\Cbatu)^{\Theta}\to T_{X_{\Theta}}(X)\to X_\Theta$.
Let $y_\Theta$ be the image of $x_\Theta$.
Then $y_\Theta = (1,(1^\Theta,0^{\Pi\setminus\Theta}))\in\overline{N}\times\Caff^\Pi = X'$ by Lemma~\ref{lem:local calculation of deformation} (\ref{enum:local calculatio of deformation:projection}).
Hence $\Stab_G(x_\Theta)\subset\Stab_G(y_\Theta) = K_\Theta A_\Theta N_\Theta$ by (\ref{enum:condition, stabilizer}).

We prove $K_\Theta N_\Theta\subset \Stab_G(x_\Theta)$.
By Lemma~\ref{lem:deformation to normal cone}, we have $\mathcal{X}_\Pi\simeq X\times(\Cbatu)^\Pi$.
Then $s(t^2)$ is given by $(\omega(t)^{-1}x_0,t^2)\in X\times(\Cbatu)^\Pi$ for $t\in (\Cbatu)^\Pi$
(cf.~Lemma \ref{lem:local calculation of deformation} (1)).
Hence $\Stab_G(s(t^2)) = \Ad(\omega(t)^{-1})K$.
Its Lie algebra is spanned by $\mathfrak{m}$ and $\{\Ad(\omega(t)^{-1})(X + \theta(X))\mid X\in\mathfrak{g}_\beta,\ \beta\in\Sigma^+\}$.
Here, $\mathfrak{g}_\beta$ is the root space for $\beta$.
Since $\Ad(\omega(t)^{-1})(X + \theta(X)) = \beta(\omega(t)^{-1})(X + \beta(\omega(t))^2\theta(X))$, the Lie algebra of $\Stab_G(s(t^2))$ is spanned by $\mathfrak{m}$ and $\{X + \beta(\omega(t))^2\theta(X)\mid X\in\mathfrak{g}_\beta,\beta\in\Sigma^+\}$.
If $\beta = \sum_{\alpha\in\Pi}n_\alpha \alpha$, then $\beta(\omega(t))^2 = \prod_{\alpha\in\Pi}t_\alpha^{2n_\alpha}$ for $t = (t_\alpha)\in(\Cbatu)^\Pi$.
Since this can be extended to any $t\in\Caff^\Pi$, the Lie algebra of $\Stab_G(s(t^2))$ contains the space spanned by $\mathfrak{m}$ and $\{X + \beta(\omega(t))^2\theta(X)\mid X\in\mathfrak{g}_\beta,\beta\in\Sigma^+\}$ for any $t\in\Caff^\Pi$.

Now set $t = t_\Theta$.
Then $s(t^2) = s(t) = x_\Theta$.
For $\beta = \sum_{\alpha\in\Pi}n_\alpha \alpha\in\Sigma^+$, $\beta(\omega(t))^2$ is $0$ or $1$ and it is $1$ if and only if $n_\alpha = 0$ for any $\alpha\in\Pi\setminus \Theta$, namely, $\mathfrak{g}_\beta\subset \mathfrak{m}_\Theta$.
Hence the Lie algebra of $\Stab_G(x_\Theta)$ contains $\Lie(K_\Theta N_\Theta)$.
Therefore, $\Stab_G(x_\Theta)\supset (K_\Theta)^\circ N_\Theta$.
Since $K = MK^\circ$ and $M$ stabilizes $s(t)$ for all $t\in(\Cbatu)^\Pi$ (hence for all $t\in\Caff^\Pi$), we have $\Stab_G(x_\Theta) \supset M(K_\Theta)^\circ N_\Theta = K_\Theta N_\Theta$.

Finally, we prove that $\Stab_{A_\Theta}(x_\Theta)\subset M$.
Since $M\subset K_\Theta$, this implies the lemma.
For $(a_\alpha)\in(\Cbatu)^\Pi$, we have $\omega(a_\alpha)x_\Theta = (1,(a_\alpha^{-2}),t_\Theta)\in\overline{N}\times\Caff^\Pi\times\Caff^\Pi\simeq\mathcal{X}'$.
Hence if $\omega(a_\alpha)\in\Stab_{A_\Theta}(x_\Theta)$, then $a_\alpha^2 = 1$ for all $\alpha\in\Pi$.
Therefore, $\omega(a_\alpha)\in M$.
\end{proof}

\begin{lem}\label{lem:transitive on fiber of Z}
We have $Gx_\Theta = f_\mathcal{Z}^{-1}(t_\Theta)$.
\end{lem}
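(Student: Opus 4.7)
The plan is to prove the two inclusions. The inclusion $Gx_\Theta \subset f_\mathcal{Z}^{-1}(t_\Theta)$ is straightforward: each $Y_\alpha$ is $G$-stable (being the closure of a $G$-orbit of $X$), so the iterated deformation to the normal cone $\mathcal{X}$ inherits a canonical $G$-action which preserves $\mathcal{Z}$, and $f$ is $G$-invariant because $G$ acts trivially on the base $\Caff^\Pi$. For the reverse inclusion, my strategy is to use $G$-transitivity on $X_\Theta$ to reduce to the fiber of the normal-bundle projection over the distinguished point $y_\Theta$, then finish the transitivity statement within that fiber using $A_\Theta$.

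In detail, I would take $z \in f_\mathcal{Z}^{-1}(t_\Theta)$; this lies in $\mathcal{Z}_\Theta$, and Lemma~\ref{lem:base of Z} provides a $G$-equivariant projection $p\colon \mathcal{Z}_\Theta \to X_\Theta$. From the proof of Lemma~\ref{lem:stabilizer of x_Theta}, the image $p(x_\Theta) = y_\Theta := (1,(1^\Theta,0^{\Pi\setminus\Theta}))$ lies in $X'\cap X_\Theta$. Since $G$ acts transitively on $X_\Theta$, after replacing $z$ by $g^{-1}z$ for a suitable $g\in G$ I may assume $p(z) = y_\Theta$. Because $\mathcal{X}' = \pi^{-1}(X'\times\Caff^\Pi)$ and the $X$-component of $\pi(z)$ equals $p(z) = y_\Theta \in X'$, this forces $z \in \mathcal{Z}'$, so I can now work in the explicit chart.

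It remains to show that every such $z$ lies in $A_\Theta\cdot x_\Theta$. By Lemma~\ref{lem:local calculation of deformation}(\ref{enum:local calculatio of deformation:projection}), in coordinates $\mathcal{Z}' \simeq \overline{N}\times(\Cbatu)^\Pi\times\Caff^\Pi$ the projection $p$ is $(n,d,t)\mapsto (n,dt)$, so the conditions $p(z) = y_\Theta$ and $t = t_\Theta$ force $n = 1$ and $d_\alpha = 1$ for $\alpha\in\Theta$, leaving only the $d_\alpha\in\Cbatu$ for $\alpha\notin\Theta$ free. By Lemma~\ref{lem:action of A on ncd}, the subgroup $A_\Theta = \{\omega(a)\mid a_\alpha = 1\text{ for }\alpha\in\Theta\}$ acts on these coordinates by $d_\alpha\mapsto a_\alpha^{-2}d_\alpha$, and this action is transitive on $(\Cbatu)^{\Pi\setminus\Theta}$. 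Since $x_\Theta$ corresponds to $d = 1^\Pi$, we conclude $z \in A_\Theta\cdot x_\Theta\subset Gx_\Theta$. The only genuinely delicate step is the reduction to the affine chart $\mathcal{Z}'$ after applying a suitable $g \in G$; this is ensured by the identity $\mathcal{X}' = \pi^{-1}(X'\times\Caff^\Pi)$ together with the fact that $y_\Theta\in X'$.
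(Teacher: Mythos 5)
Your proof is correct and follows essentially the same route as the paper's: both reduce by $G$-transitivity on $X_\Theta$ to the fiber $p^{-1}(y_\Theta)\cap\mathcal{Z}$, identify it explicitly in the chart $\mathcal{Z}'$ via Lemma~\ref{lem:local calculation of deformation}(\ref{enum:local calculatio of deformation:projection}), and finish with the $A_\Theta$-action from Lemma~\ref{lem:action of A on ncd}. The only minor organizational difference is that the paper phrases the final step as the equality $A_\Theta x_\Theta = p^{-1}(y_\Theta)\cap\mathcal{Z}$ (using the stabilizer computations of (\ref{enum:condition, stabilizer}) and Lemma~\ref{lem:stabilizer of x_Theta} to pass from $\Stab_G(y_\Theta)$ to $A_\Theta$), whereas you only need the inclusion $p^{-1}(y_\Theta)\cap\mathcal{Z}\subset A_\Theta x_\Theta$ because you established $Gx_\Theta\subset f_\mathcal{Z}^{-1}(t_\Theta)$ separately.
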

\begin{proof}
We regard $T_{X_\Theta}(X)$ as a subvariety of $\mathcal{X}$ by
$T_{X_\Theta}(X)=T_{X_\Theta}(X)\times \{1^\Theta\}\subset T_{X_\Theta}(X)\times (\Cbatu)^\Theta\cong \mathcal{X}_\Theta\subset \mathcal{X}$.
By Lemma~\ref{lem:base of Z}, we have $f_\mathcal{Z}^{-1}(t_\Theta) = T_{X_\Theta}(X)\cap \mathcal{Z}$.
Let $p\colon T_{X_\Theta}(X)\to X_\Theta$ be the projection.
Then $p(x_\Theta)=y_\Theta$, where $y_\Theta$ is given in the proof of the previous lemma.
Since $X_\Theta$ is a $G$-orbit, we have $Gy_\Theta = X_\Theta$.
Hence it is sufficient to prove that $\Stab_G(y_\Theta)x_\Theta = p^{-1}(y_\Theta)\cap \mathcal{Z}$.
By (\ref{enum:condition, stabilizer}) and Lemma \ref{lem:stabilizer of x_Theta}, it is equivalent to
showing that $A_\Theta x_\Theta = p^{-1}(y_\Theta)\cap \mathcal{Z}$.
By Lemma \ref{lem:local calculation of deformation} (\ref{enum:local calculatio of deformation:projection}),
we have $p^{-1}(y_\Theta) = \{1\}\times \{1^{\Theta}\}\times \Caff^{\Pi\setminus\Theta}\times\{(1^\Theta,0^{\Pi\setminus\Theta})\}\subset\overline{N}\times(\Cbatu)^\Pi\times\Caff^\Pi = \mathcal{X}'$.
Hence the lemma follows from Lemma~\ref{lem:action of A on ncd}.
\end{proof}
In general, for an algebraic group $H$ and an $H$-variety $Y$, let $\Perv_H(Y)$ be the category of $H$-equivariant perverse sheaves.
Then $\HC_{\Theta,\rho}\simeq\Perv_{K_\Theta N_\Theta}(G/B)\simeq \Perv_{K_\Theta N_\Theta\times B}(G)\simeq\Perv_{B}(G/K_\Theta N_\Theta)$.
Write $\Delta'_\Theta$ for this equivalence.
Let $\Theta_2\subset\Theta_1\subset\Pi$.
Take $n_\alpha\in\Z_{\ge 1}$ for each $\alpha\in\Theta_1\setminus\Theta_2$ and
define $\nu\colon \Caff^1\to \Caff^{\Pi}$ by $\nu(t) = (t^{n_\alpha})_{\alpha\in\Theta_1\setminus\Theta_2}\times (0^{\Pi\setminus\Theta_1})\times (1^{\Theta_2})$.
Put $\mathcal{Z}_\nu = \mathcal{Z}\times_{\Caff^\Pi}\Caff^1$ and
denote the canonical morphism $\mathcal{Z}_\nu \to \Caff^1$ by $f_{\nu}$.
Then, by Lemma \ref{lem:stabilizer of x_Theta} and Lemma \ref{lem:transitive on fiber of Z}, we have $f_{\nu}^{-1}(0)\simeq G/K_{\Theta_2}N_{\Theta_2}$ and $f_{\nu}^{-1}(\Cbatu) \simeq G/K_{\Theta_1}N_{\Theta_1}\times \Cbatu$.
Let $p_\nu\colon f_{\nu}^{-1}(\Cbatu) \simeq G/K_{\Theta_1}N_{\Theta_1}\times \Cbatu\to G/K_{\Theta_1}N_{\Theta_1}$ be the first projection and $R\psi$ be the nearby cycle functor with respect to $f_{\nu}$.
Define $\Kat{\nu}\colon \Perv_{B}(G/K_{\Theta_1}N_{\Theta_1})\to\Perv_B(G/K_{\Theta_2}N_{\Theta_2})$ by $\Kat{\nu} = R\psi\circ p_\nu^*$.

Now we prove the main theorem of this paper.
\begin{thm}
As functors $\HC_{\Theta_1,\rho}\to \Perv_B(G/K_{\Theta_2}N_{\Theta_2})$, we have $\Kat{\nu}\circ \Delta_{\Theta_1}'\simeq \Delta_{\Theta_2}'\circ J_{\Theta_2,\Theta_1}$.
\end{thm}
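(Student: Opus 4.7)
The plan is to reduce the theorem to Theorem~\ref{thm:Emerton-Nadler-Vilonen2}. Define a cocharacter $\sigma\colon\Cbatu\to A$ by
\[
\sigma(t)=\prod_{\alpha\in\Theta_1\setminus\Theta_2}\omega_\alpha(t^{n_\alpha})\cdot\prod_{\alpha\in\Pi\setminus\Theta_1}\omega_\alpha(t^{m_\alpha}),
\]
where $m_\alpha\in\Z_{>0}$ are chosen arbitrarily. Then $\{\alpha\in\Pi\mid\langle\alpha,\sigma\rangle=0\}=\Theta_2$ and $\langle\alpha,\sigma\rangle\ge 0$ for all $\alpha\in\Pi$, so Theorem~\ref{thm:Emerton-Nadler-Vilonen2} applied with $\nu=\sigma$ and $\Theta'=\Theta_1$ gives
\[
\Delta\circ J_{\Theta_2,\Theta_1}\simeq \Psi_\sigma\circ\Delta\colon \HC_{\Theta_1,\rho}\to\Perv(G/B),
\]
where $\Delta\colon\HC_{\Theta,\rho}\simeq\Perv_{K_\Theta N_\Theta}(G/B)$ denotes the Beilinson--Bernstein equivalence. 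The theorem is thereby reduced to identifying $\Kat{\nu}$ with the transport of $\Psi_\sigma$ through $\Perv_B(G/K_\Theta N_\Theta)\simeq\Perv_{K_\Theta N_\Theta}(G/B)$.

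\textbf{Geometric identification.} To compare $\Kat{\nu}$ and $\Psi_\sigma$, I will lift both to the common cover $G$ of $G/K_\Theta N_\Theta$ and $G/B$. Consider the morphism $\Xi\colon G\times\Caff^1\to\mathcal{Z}_\nu$ defined by $(g,t)\mapsto g\cdot s(\nu(t))$. By Lemmas~\ref{lem:stabilizer of x_Theta} and~\ref{lem:transitive on fiber of Z}, this is a principal $K_{\Theta_1}N_{\Theta_1}$-bundle over $f_\nu^{-1}(\Cbatu)$ and a principal $K_{\Theta_2}N_{\Theta_2}$-bundle over $f_\nu^{-1}(0)$. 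Using the local chart $\mathcal{Z}'\simeq\overline{N}\times(\Cbatu)^\Pi\times\Caff^\Pi$ from Lemma~\ref{lem:local calculation of deformation} together with the $A$-action of Lemma~\ref{lem:action of A on ncd}, I expect to verify that the self-map $(g,t)\mapsto(g\sigma(t),t)$ of $G\times\Caff^1$ intertwines $\Xi^*p_\nu^*\mathscr{F}$ with the pullback of $a_\sigma^*\bar{\mathscr{F}}$ along $G\times\Caff^1\to G/B\times\Caff^1$ (where $\bar{\mathscr{F}}$ is the $G/B$-transport of $\mathscr{F}$), and that this intertwining extends across $t=0$ compatibly with proper base change for the nearby-cycle functor.

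\textbf{Main obstacle.} The principal difficulty will be to show that $\Psi_\sigma$, regarded as a functor on $\Perv_{K_{\Theta_1}N_{\Theta_1}}(G/B)$, is independent of the auxiliary exponents $m_\alpha$ for $\alpha\in\Pi\setminus\Theta_1$. Since $\omega_\alpha(\Cbatu)\subset A_{\Theta_1}$ is not contained in $K_{\Theta_1}N_{\Theta_1}$ for such $\alpha$, this independence is not immediate from the equivariance. The key observation should be that the trivial-infinitesimal-character condition in $\HC_{\Theta_1,\rho}$ forces the associated $\mathscr{D}$-module on $G/B$ to be monodromic with trivial monodromy along $A_{\Theta_1}$, so pullback by $\omega_\alpha(t^{m_\alpha})$ contributes nothing to the nearby cycle. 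Once this independence is established, the identification $\Kat{\nu}\simeq F_{\Theta_2}\circ\Psi_\sigma\circ F_{\Theta_1}^{-1}$, with $F_\Theta\colon\Perv_{K_\Theta N_\Theta}(G/B)\xrightarrow{\sim}\Perv_B(G/K_\Theta N_\Theta)$, follows from the geometric picture above, completing the proof.
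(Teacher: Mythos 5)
Your overall strategy---reduce to Theorem~\ref{thm:Emerton-Nadler-Vilonen2} and then identify $\Kat{\nu}$ geometrically with a $\Psi$-type nearby cycle functor on $G/B$---is the same as the paper's, but the way you set up the two halves introduces a gap that the paper avoids, and the step you flag as the ``main obstacle'' is not actually where the difficulty lies.

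The paper's proof is a single cartesian diagram: the map $G\times\Caff^1\to\mathcal{Z}_\nu$, $(g,t)\mapsto g s_\nu(t^2)$, together with Lemmas~\ref{lem:stabilizer of x_Theta}, \ref{lem:transitive on fiber of Z}, and \ref{lem:action of A on ncd}, identifies $\Kat{\nu}$ through $\Perv_B(G/K_\Theta N_\Theta)\simeq\Perv_{K_\Theta N_\Theta}(G/B)$ with the functor $\Psi$ for the cocharacter that actually shows up in the local coordinates of $\mathcal{Z}'$, namely $\widetilde\nu(t)=\prod_{\alpha\in\Theta_1\setminus\Theta_2}\omega_\alpha(t^{n_\alpha})$ (this is what the abbreviated notation $\omega(\nu(t))$ means). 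Since every square is cartesian with smooth vertical arrows, the smooth base change theorem transports the nearby cycle directly; no ``extension of an intertwiner across $t=0$'' has to be argued separately.

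Your proposal instead applies Theorem~\ref{thm:Emerton-Nadler-Vilonen2} to a \emph{different} cocharacter $\sigma=\widetilde\nu\cdot\tau$ with $\tau(t)=\prod_{\alpha\in\Pi\setminus\Theta_1}\omega_\alpha(t^{m_\alpha})$, in order to satisfy the hypothesis $\Theta_\sigma=\Theta_2\subset\Theta_1$. That application of Theorem~\ref{thm:Emerton-Nadler-Vilonen2} is fine, and it already shows that the resulting functor on $\HC_{\Theta_1,\rho}$ is independent of the $m_\alpha$, so the ``main obstacle'' you identify is in fact a non-issue---you do not need a separate monodromicity argument. The genuine gap is the other half: you still have to prove $\Kat{\nu}\simeq F_{\Theta_2}\circ\Psi_\sigma\circ F_{\Theta_1}^{-1}$, but the variety $\mathcal{Z}_\nu$ has no memory of the auxiliary exponents $m_\alpha$; the scaling that the $A$-action (Lemma~\ref{lem:action of A on ncd}) produces on $\mathcal{Z}'$ involves only $\omega_\alpha$ for $\alpha\in\Theta_1\setminus\Theta_2$. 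Consequently the self-map $(g,t)\mapsto(g\sigma(t),t)$, pushed through $\Xi$, sends $(g,t)$ to a point whose $\overline{N}\times\Caff^\Pi\times\Caff^\Pi$-coordinates contain $\sigma_\alpha(t)^{-2}$ for all $\alpha\notin\Theta_2$, and for $\alpha\in\Pi\setminus\Theta_1$ these blow up as $t\to 0$; the intertwining does \emph{not} extend across $t=0$, so it cannot be fed into the nearby-cycle comparison as you hope. What your route actually requires is the nontrivial statement that $\Psi_\sigma\simeq\Psi_{\widetilde\nu}$ on $\Perv_{K_{\Theta_1}N_{\Theta_1}}(G/B)$, i.e.\ that twisting by the $A_{\Theta_1}$-valued cocharacter $\tau$ does not change the nearby cycle on $K_{\Theta_1}N_{\Theta_1}$-equivariant objects. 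That is precisely the content you would need to supply, and it is not addressed in your sketch.

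A secondary issue: you use $s(\nu(t))$ where the paper uses $s_\nu(t^2)$. The $t^2$ is forced by condition~(\ref{enum:condition, local coodinate}), which puts an $a^{-2}$ into the coordinate of $\omega(a)x_0$; dropping the square breaks the identification of the stabilizers via Lemma~\ref{lem:stabilizer of x_Theta} and hence the claim that $\Xi$ is the right torsor.
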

\begin{proof}
Let $s_\nu\colon \Caff^1 \to \mathcal{Z}_\nu$ be the section of $f_\nu$ obtained by the base change of $s$
under $\nu$.
Consider the following diagram:
\[
\xymatrix{
G/B\ar@{<->}[d]^\simeq_{g\mapsto g^{-1}} & G/B\times\Cbatu\ar[l]^{(g,t)\mapsto \nu(t)g}\ar[r]\ar@{<->}[d]^\simeq & G/B\times \Caff^1\ar@{<->}[d]^\simeq & G/B\ar[l]\ar@{<->}[d]^\simeq\\
B\backslash G & B\backslash G\times\Cbatu\ar[l]\ar[r] & B\backslash G\times \Caff^1 & B\backslash G\ar[l]\\
G\ar[d]\ar[u] & G\times\Cbatu\ar[l]\ar[r]\ar[d]_{(g,t)\mapsto (\omega(\nu(t))^{-1}g,t)}\ar[u] & G\times\Caff^1\ar[d]_{(g,t)\mapsto gs_\nu(t^2)}\ar[u] & G\ar[l]\ar[d]\ar[u]\\
G/K_{\Theta_1}N_{\Theta_1} & G/K_{\Theta_1}N_{\Theta_1}\times\Cbatu\ar[l]\ar[r] & \mathcal{Z}_\nu & G/K_{\Theta_2}N_{\Theta_2}.\ar[l]
}
\]
Every rectangle in the diagram above is cartesian, and every vertical arrow is smooth.
The functor $\Psi_\nu$ of Emerton-Nadler-Vilonen is defined as the nearby cycle functor with
respect to the top row, and the functor $\Kat{\nu}$ is defined as the nearby cycle functor with
respect to the bottom row.
Therefore, we get our theorem by Theorem~\ref{thm:Emerton-Nadler-Vilonen2} and the smooth base change theorem.
\end{proof}

\newcommand{\etalchar}[1]{$^{#1}$}
\def\cprime{$'$} \def\dbar{\leavevmode\hbox to 0pt{\hskip.2ex \accent"16\hss}d}
  \def\Dbar{\leavevmode\lower.6ex\hbox to 0pt{\hskip-.23ex\accent"16\hss}D}
  \def\cftil#1{\ifmmode\setbox7\hbox{$\accent"5E#1$}\else
  \setbox7\hbox{\accent"5E#1}\penalty 10000\relax\fi\raise 1\ht7
  \hbox{\lower1.15ex\hbox to 1\wd7{\hss\accent"7E\hss}}\penalty 10000
  \hskip-1\wd7\penalty 10000\box7}
  \def\cfudot#1{\ifmmode\setbox7\hbox{$\accent"5E#1$}\else
  \setbox7\hbox{\accent"5E#1}\penalty 10000\relax\fi\raise 1\ht7
  \hbox{\raise.1ex\hbox to 1\wd7{\hss.\hss}}\penalty 10000 \hskip-1\wd7\penalty
  10000\box7} \newcommand{\noop}[1]{}


\begin{thebibliography}{KKM{\etalchar{+}}78}

\bibitem[BB81]{MR610137}
A.~Be{\u\i}linson and J.~Bernstein, \emph{Localisation de {$g$}-modules}, C. R.
  Acad. Sci. Paris S\'er. I Math. \textbf{292} (1981), no.~1, 15--18.

\bibitem[Cas80]{MR562655}
W.~Casselman, \emph{Jacquet modules for real reductive groups}, Proceedings of
  the International Congress of Mathematicians (Helsinki, 1978) (Helsinki),
  Acad. Sci. Fennica, 1980, pp.~557--563.

\bibitem[CO78]{MR0480884}
W.~Casselman and M.~S. Osborne, \emph{The restriction of admissible
  representations to {${\mathfrak n}$}}, Math. Ann. \textbf{233} (1978), no.~3,
  193--198.

\bibitem[DCP83]{MR718125}
C.~De~Concini and C.~Procesi, \emph{Complete symmetric varieties}, Invariant
  theory (Montecatini, 1982), Lecture Notes in Math., vol. 996, Springer,
  Berlin, 1983, pp.~1--44.

\bibitem[ENV04]{MR2096674}
M.~Emerton, D.~Nadler, and K.~Vilonen, \emph{A geometric {J}acquet functor},
  Duke Math. J. \textbf{125} (2004), no.~2, 267--278.

\bibitem[Ful98]{MR1644323}
W.~Fulton, \emph{Intersection theory}, second ed., Ergebnisse der Mathematik
  und ihrer Grenzgebiete. 3. Folge. A Series of Modern Surveys in Mathematics,
  vol.~2, Springer-Verlag, Berlin, 1998.

\bibitem[GW80]{MR597811}
R.~Goodman and N.~R. Wallach, \emph{Whittaker vectors and conical vectors}, J.
  Funct. Anal. \textbf{39} (1980), no.~2, 199--279.

\bibitem[HS83a]{MR716371}
H.~Hecht and W.~Schmid, \emph{Characters, asymptotics and {${\mathfrak
  n}$}-homology of {H}arish-{C}handra modules}, Acta Math. \textbf{151} (1983),
  no.~1-2, 49--151.

\bibitem[HS83b]{MR705884}
H.~Hecht and W.~Schmid, \emph{On the asymptotics of {H}arish-{C}handra
  modules}, J. Reine Angew. Math. \textbf{343} (1983), 169--183.

\bibitem[Kas83]{MR726425}
M.~Kashiwara, \emph{Vanishing cycle sheaves and holonomic systems of
  differential equations}, Algebraic geometry ({T}okyo/{K}yoto, 1982), Lecture
  Notes in Math., vol. 1016, Springer, Berlin, 1983, pp.~134--142.

\bibitem[KKM{\etalchar{+}}78]{MR485861}
M.~Kashiwara, A.~Kowata, K.~Minemura, K.~Okamoto, T.~Oshima, and M.~Tanaka,
  \emph{Eigenfunctions of invariant differential operators on a symmetric
  space}, Ann. of Math. (2) \textbf{107} (1978), no.~1, 1--39.

\bibitem[SS97]{MR1471867}
P.~Schneider and U.~Stuhler, \emph{Representation theory and sheaves on the
  {B}ruhat-{T}its building}, Inst. Hautes \'Etudes Sci. Publ. Math. (1997),
  no.~85, 97--191.

\bibitem[Wal88]{MR929683}
N.~R. Wallach, \emph{Real reductive groups. {I}}, Pure and Applied Mathematics,
  vol. 132, Academic Press Inc., Boston, MA, 1988.

\end{thebibliography}
\end{document}